\newcommand{\R}{\mathbb{R}}
\newtheorem{theorem}{Theorem}[section]
\newtheorem{lemma}[theorem]{Lemma}
\newtheorem{remark}[theorem]{Remark}
\newtheorem{definition}[theorem]{Definition}
\numberwithin{equation}{section}
\newcommand{\Fg}{\mathbf{g}}
\begin{document} 

\title[Viscous contact wave of the Landau equation]{Asymptotics toward viscous contact waves for solutions of the Landau equation}

\author[R.-J. Duan]{Renjun Duan}
\address[R.-J. Duan]{Department of Mathematics, The Chinese University of Hong Kong, Shatin, Hong Kong,
        People's Republic of China}
\email{rjduan@math.cuhk.edu.hk}

\author[D.-C. Yang]{Dongcheng Yang}
\address[D.-C. Yang]{Department of Mathematics, The Chinese University of Hong Kong, Shatin, Hong Kong,
        People's Republic of China}
\email{dcmath@sina.com}

\author[H.-J. Yu]{Hongjun Yu} \address[H.-J. Yu]{School of Mathematical Sciences, South China Normal University, Guangzhou 510631, People's Republic of China}
\email{yuhj2002@sina.com}

\begin{abstract}
In the paper, we are concerned with the large time asymptotics toward the viscous contact waves for solutions of the Landau equation with physically realistic Coulomb interactions. Precisely, for the corresponding Cauchy problem in the spatially one-dimensional setting, we construct the unique global-in-time solution near a local Maxwellian whose fluid quantities are the viscous contact waves in the sense of hydrodynamics and also prove that the solution tends toward such local Maxwellian in large time. The result is proved by elaborate energy estimates and seems the first one on the dynamical stability of contact waves for the Landau equation. One key point of the proof is to introduce a new  time-velocity weight function that includes an exponential factor of the form $\exp (q(t)\langle \xi\rangle^2)$ with
$$
q(t):=q_1-q_2\int_0^tq_3(s)\,ds,
$$
where $q_1$ and $q_2$ are given positive constants and $q_3(\cdot)$ is defined by the energy dissipation rate of the solution itself. The time derivative of such weight function is able to induce an extra quartic dissipation term for treating the large-velocity growth in the nonlinear estimates due to  degeneration of the linearized Landau operator in the Coulomb case. Note that in our problem the explicit time-decay of solutions around contact waves is unavailable but no longer needed under the crucial use of the above weight function, which is different from the situation in \cite{Duan,Duan-Yu1}.

\end{abstract}

\keywords{Landau equation, Coulomb potentials, contact wave, asymptotic stability, weighted energy estimates, quartic dissipation}

\maketitle

\tableofcontents

\thispagestyle{empty}

\section{Introduction}

\subsection{Formulation of the problem}
In the paper, we are concerned with the Landau equation in the spatially one-dimensional setting
\begin{equation}
\label{1.1}
\partial_{t}F+\xi_{1}\partial_{x}F=Q(F,F).
\end{equation}
Here, the unknown $F=F(t,x,\xi)\geq0$ stands for the density
distribution function of particles with position
$x\in\mathbb{R}$ and velocity $\xi=(\xi_{1},\xi_{2},\xi_{3})\in\mathbb{R}^{3}$ at time $t>0$.  The Landau collision operator $Q(\cdot,\cdot)$ on the right hand side of \eqref{1.1} is a bilinear integro-differential operator acting only on velocity variables of the form
\begin{equation*}
Q(F_{1},F_{2})(\xi)=\nabla_{\xi}\cdot\int_{\mathbb{R}^{3}}\phi(\xi-\xi_{*})\left\{F_{1}(\xi_{*})\nabla_{\xi}F_{2}(\xi)-\nabla_{\xi_{*}}F_{1}(\xi_{*})F_{2}(\xi)
\right\}\,d\xi_{*},
\end{equation*}
where for the Landau collision kernel $\phi(z)$ with $z=\xi-\xi_\ast$ (cf. \cite{Guo-2002,H}), we consider only the case of physically realistic Coulomb interactions through the paper, namely,
\begin{equation}\label{def.lk}
\phi_{ij}(z)=\frac{1}{|z|}\big(\delta_{ij}-\frac{z_iz_j}{|z|^{2}}\big),\quad 1\leq i,j\leq 3.
\end{equation}
To solve \eqref{1.1} we supplement it with initial data
\begin{equation}
\label{ide}
F(0,x,\xi)=F_{0}(x,\xi)
\end{equation}
that connects two distinct global Maxwellians at the far fields $x=\pm\infty$ in the way that
\begin{align}
\label{1.2}
F_{0}(x,\xi)\to
\frac{\rho_\pm}{{(2\pi R\theta_\pm)^{\frac{3}{2}}}}\exp\Big(-\frac{|\xi-u_\pm|^2}{2R\theta_\pm}\Big)\ \ \mbox{as}\ \ x\to\pm\infty,
\end{align}
where $\rho_\pm>0$, $\theta_\pm>0$ and $u_{1\pm}$ with $u_{\pm}=(u_{1\pm},0,0)$ are constants and $R$ is the gas constant.

We are interested in studying the global existence and large-time behavior of solutions to the Cauchy problem \eqref{1.1}, \eqref{ide} and \eqref{1.2} in case of
\begin{equation}
\label{cond.fd}
\rho_+\neq \rho_-, \ u_+=u_-,\  \rho_+\theta_+=\rho_-\theta_-.
\end{equation}
Indeed, the qualitative behavior of solutions is closely related to that of the corresponding fluid dynamic system, for instance, the compressible Euler and Navier-Stokes equations at the zero-order and first-order levels, respectively, cf.~\cite{SR,Ukai-Yang}. Under such condition \eqref{cond.fd}, the asymptotic state of solutions to those fluid equations would be a contact wave introduced in the context of conservation laws, cf.~\cite{Liu-Xin,Smoller}. Our goal in this paper is to construct a global solution of \eqref{1.1}, \eqref{ide} and \eqref{1.2} with the condition  \eqref{cond.fd} that tends in large time toward the viscous contact wave in the sense of Definition \ref{def.vcw} to be specified later. We remark that the viscous contact wave was first introduced in \cite{HMS} as the asymptotic wave pattern for the compressible Navier-Stokes equations and later in \cite{Huang-Yang} for the Boltzmann equation with hard sphere collisions. It has been a challenging open problem to extend the stability results on contact waves in  \cite{Huang-Yang} and \cite{Huang-Xin-Yang} to the case of the Boltzmann equation with physically realistic long-range interactions or even the Landau equation with grazing collisions. The relevant literature will be further reviewed later on.

For the above purpose, motivated by \cite{Huang-Xin-Yang}, we have to turn to the Landau equation in the Lagrangian coordinates.  As in \cite{Liu-Yu} and \cite{Liu-Yang-Yu}, associated with a solution $F(t,x,\xi)$ to the Landau equation \eqref{1.1}, we define five macroscopic (fluid) quantities:
the mass density $\rho(t,x)$, momentum $\rho(t,x)u(t,x)$, and
energy density $e(t,x)+\frac 12|u(t,x)|^2$:
\begin{equation}
\label{1.3}
  \left\{
\begin{array}{rl}
&\rho(t,x)\equiv \int_{\mathbb{R}^3}\psi_{0}(\xi)F(t,x,\xi)\,d\xi,\\
&\rho(t,x)u_i(t,x)\equiv\int_{\mathbb{R}^3}\psi_{i}(\xi)F(t,x,\xi)\,d\xi,\ \
\mbox{for} \ \ i=1,2,3,\\
 &\rho(t,x)\left[e(t,x)+\frac
12|u(t,x)|^2)\right]\equiv\int_{\mathbb{R}^3}\psi_{4}(\xi)F(t,x,\xi)\,d\xi, \  \ \
\ \
\end{array} \right.
\end{equation}
and the corresponding local Maxwellian $M$:
\begin{equation}
\label{1.5}
M=M_{[\rho,u,\theta]}(t,x,\xi):=\frac{\rho(t,x)}{(2\pi R\theta(t,x))^{\frac{3}{2}}}\exp\Big(-\frac{|\xi-u(t,x)|^{2}}{2R\theta(t,x)}\Big).
\end{equation}
Here $\rho=\rho(t,x)>0$ is the mass density, $u=u(t,x)=(u_{1},u_{2},u_{3})$ is the bulk velocity, $e=e(t,x)>0$
is the internal energy depending on the temperature $\theta$ by $e=\frac{3}{2}R\theta=\theta$ with $R=2/3$
taken for convenience,
and $\psi_{i}(\xi)$ $(i=0,1,2,3,4)$ are
five collision invariants given by
\begin{equation*}
\psi_{0}(\xi)=1, \quad \psi_{i}(\xi)=\xi_{i}~(i=1,2,3),\quad \psi_{4}(\xi)=\frac{1}{2}|\xi|^{2},
\end{equation*}
satisfying
\begin{equation}
\label{1.4}
\int_{\mathbb{R}^{3}}\psi_{i}(\xi)Q(F,F)\,d\xi=0,\quad \mbox{for $i=0,1,2,3,4$.}
\end{equation}
Furthermore, in terms of $[\rho,\rho u_1](t,x)$, we introduce the coordinate transformation
\begin{equation}\label{def.lct}
(t,x)\rightarrow \Big(t,\int^{(t,x)}_{(0,0)}\rho(\tau,y)\,dy-(\rho u_{1})(\tau,y)\,d\tau\Big),
\end{equation}
where $\int^{B}_{A}g\,dy+h\,d\tau$ represents a line integration from point $A$ to point $B$ on
the half-plane $\mathbb{R}^{+}\times\mathbb{R}$. Still using the $(t,x)$ variables for simplicity of notations, we then rewrite
the Landau equation \eqref{1.1} as the one in the Lagrangian coordinates
\begin{equation}
\label{1.16}
\partial_{t}F+\frac{\xi_1-u_1}{v}\partial_x F
=Q(F,F),
\end{equation}
with initial data
\begin{align}
\label{1.17}
F(0,x,\xi)=F_{0}(x,\xi)\to M_{[1/v_{\pm},u_{\pm},\theta_{\pm}]}(\xi):=\frac{1/v_\pm}{{(2\pi R\theta_\pm)^{\frac{3}{2}}}}\exp\Big(-\frac{|\xi-u_\pm|^2}{2R\theta_\pm}\Big)\ \ \mbox{as}\ \ x\to\pm\infty,
\end{align}
where
\begin{equation}
\label{def.vrho}
v=v(t,x):=\frac{1}{\rho(t,x)}
\end{equation}
denotes the specific volume of the particles and 
$v_{\pm}=1/\rho_{\pm}>0$.

\begin{definition}\label{def.vcw}
Given the far-field data $[v_\pm,u_\pm,\theta_\pm]$ satisfying that $v_\pm>0$, $\theta_\pm>0$ and
\begin{equation*}
u_{-}=u_{+},\quad  p_{-}:=\frac{R\theta_{-}}{v_{-}}=\frac{R\theta_{+}}{v_{+}}=:p_{+},\quad v_{-}\neq v_{+},
\end{equation*}
a viscous contact wave corresponding to the Cauchy problem \eqref{1.16} and \eqref{1.17} on the Landau equation is defined to be a local Maxwellian
\begin{equation}
\label{1.36}
 \overline{M}= M_{[1/\overline{v},\overline{u},\overline{\theta}]}(t,x,\xi):=\frac{1/\overline{v}(t,x)}{\big(2\pi
R\overline{\theta}(t,x)\big)^{\frac{3}{2}}}\exp\Big(-\frac{|\xi-\overline{u}(t,x)|^2}{2R\overline{\theta}(t,x)}\Big),
\end{equation}
where the macroscopic variables $[\bar{v},\bar{u},\bar{\theta}](t,x)$ are the viscous contact wave constructed in \eqref{1.28}  in the sense of hydrodynamic equations of \eqref{1.16}, satisfying the same far-field condition as in \eqref{1.17}, i.e.,
$$
M_{[1/\overline{v},\overline{u},\overline{\theta}]}(t,x,\xi)\to M_{[1/v_{\pm},u_{\pm},\theta_{\pm}]}(\xi)\ \ \mbox{as}\ \ x\to\pm\infty,
$$
see Subsection \ref{subsec.vcw} later on for details.
\end{definition}

\begin{remark}
For convenience of the proof regarding the dynamical stability of the viscous contact wave \eqref{1.36}, throughout the paper we fix a normalized global Maxwellian with the fluid constant state $(1,0,3/2)$
\begin{equation}
\label{1.38}
\mu=M_{[1,0,\frac{3}{2}]}(\xi):=(2\pi)^{-\frac{3}{2}}\exp\big(-\frac{|\xi|^{2}}{2}\big)
\end{equation}
as a reference equilibrium state, and choose both the far-field data $[v_+,u_+,\theta_+]$ and $[v_-,u_-,\theta_-]$
in \eqref{1.17} to be close enough to the constant state $(1,0,3/2)$ such that the viscous contact wave further satisfies that
\begin{equation}
\label{1.37}
\left\{\begin{aligned}
 &\sup_{t\geq 0,\,x\in \R}\{|\bar{v}(t,x)-1|+|\bar{u}(t,x)|+|\bar{\theta}(t,x)-\frac{3}{2}|\}<\eta_{0},\\
& \frac{1}{2}\sup_{t\geq0,\,x\in\mathbb{R}}\bar{\theta}(t,x)<\frac{3}{2}<\inf_{t\geq0,\,x\in\mathbb{R}}\bar{\theta}(t,x),
\end{aligned}\right.
\end{equation}
for some constant $\eta_{0}>0$  enough small.
\end{remark}

\subsection{Weight, norm and main result}
To present the main result, we first introduce the appropriate perturbations, weight functions and norms.
To the end, for any solution $F(t,x,\xi)$ to  the Cauchy problem \eqref{1.16} and \eqref{1.17}, we define the macroscopic perturbation $[\widetilde{v},\widetilde{u},\widetilde{\theta}]$ and the microscopic perturbation $\mathbf{g} $  by
\begin{equation}
\left\{
\begin{array}{rl}
& \widetilde{v}(t,x)={v}(t,x)-\overline{v}(t,x),\\
& \widetilde{u}(t,x)={u}(t,x)-\overline{u}(t,x),\\
& \widetilde{\theta}(t,x)={\theta}(t,x)-\overline{\theta}(t,x),\\
 &  \mathbf{g}(t,x,\xi)=\frac{\widetilde{G}(t,x,\xi)}{\sqrt{\mu}}=\frac{G(t,x,\xi)-\overline{G}(t,x,\xi)}{\sqrt{\mu}},\ \ G=F-M_{[1/v,u,\theta]},
 \end{array} \right.
\label{2.1}
\end{equation}
where $[v,u,\theta](t,x)$ is given by \eqref{1.3} and \eqref{def.vrho},   $[\bar{v},\bar{u},\bar{\theta}](t,x)$ is the fluid viscous contact wave, and the term $\overline{G}(t,x,\xi)$ is defined as
\begin{equation}
\label{2.3}
\overline{G}(t,x,\xi)=\frac{1}{v}L_M^{-1}P_1\xi_1M\Big\{\frac{|\xi-u|^2\overline{\theta}_x}{2R\theta^2}
+\frac{(\xi-u)\cdot \overline{u}_{x}}{R\theta}\Big\}
\end{equation}
with the microscopic projection $P_1$ given in \eqref{1.20a}.
In order to prove the stability of the local Maxwellian $M_{[1/\overline{v},\overline{u},\overline{\theta}]}$ defined in \eqref{1.36}, the most
key step is to establish uniform energy estimates on $[\widetilde{v},\widetilde{u},\widetilde{\theta}]$ and $\mathbf{g} $. Moreover, inspired by \cite{Guo-2002} and \cite{Duan}, a crucial point in the proof is to introduce the weight function in the following


\begin{definition}[Time-velocity weight]
We define
\begin{equation}
\label{1.32}
w(\beta)(t,\xi):= \langle \xi\rangle^{(l-|\beta|)}e^{q(t)\langle \xi\rangle^{2}},\ \ \  l\geq |\beta|,
\quad \langle \xi\rangle=\sqrt{1+|\xi|^{2}},
\end{equation}
with 
\begin{equation}
\label{1.36a}
q(t):=q_{1}-q_{2}\int^{t}_{0}q_{3}(s)\,ds>0,\quad \forall\,t\geq 0,
\end{equation}
where the strictly positive constants $q_{1}>0$ and $q_{2}>0$ will be chosen in the proof later, see also Theorem \ref{thm1.1},  and the function $q_{3}(t)$, depending on both the viscous contact wave and the macroscopic perturbation,  is given by
\begin{align}
\label{1.33}
q_{3}(t)&:=\|\bar{v}_{x}(t)\|_{L_{x}^{\infty}}^{3}+\|\bar{v}_{t}(t)\|^{2}+\sum_{|\alpha|=2}\|\partial^{\alpha}\bar{v}(t)\|^{2}
+\sum_{1\leq|\alpha|\leq2}\|\partial^{\alpha}\bar{u}(t)\|^{2}\notag\\
&\qquad+\sum_{1\leq|\alpha|\leq2}(\|\partial^{\alpha}\widetilde{v}(t)\|^{2}
+\|\partial^{\alpha}\widetilde{u}(t)\|^{2}).
\end{align}
In addition, we require that $q_3(\cdot)$ is integrable in time satisfying
\begin{equation*}
q_\infty:=q_{1}-q_{2}\int^{\infty}_{0}q_{3}(s)\,ds>0,
\end{equation*}
so that  $q(t)$ is a strictly positive continuous function monotonically decreasing from $q_1>0$ at $t=0$ to $q_\infty>0$ as $t\to+\infty$.
\end{definition}

\begin{remark}\label{rem.q12}
It should be emphasized that the non-negative function $q_3(t)$ contains two parts
\begin{eqnarray*}
q_3^I(t)&=&\|\bar{v}_{x}(t)\|_{L_{x}^{\infty}}^{3}+\|\bar{v}_{t}(t)\|^{2}+\sum_{|\alpha|=2}\|\partial^{\alpha}\bar{v}(t)\|^{2}
+\sum_{1\leq|\alpha|\leq2}\|\partial^{\alpha}\bar{u}(t)\|^{2},\\
\quad q_3^{I\!I}(t)&=&\sum_{1\leq|\alpha|\leq2}(\|\partial^{\alpha}\widetilde{v}(t)\|^{2}
+\|\partial^{\alpha}\widetilde{u}(t)\|^{2}),
\end{eqnarray*}
related to the viscous contact wave and the macroscopic perturbation, respectively, and both parts can be verified to be integrable in large time. Indeed, due to the time-decay property \eqref{1.29},  $q_3^I(t)$ has a fast decay as $(1+t)^{-(1+\vartheta)}$ with some constant $\vartheta>0$. Also, $q_3^{I\!I}(t)$ is part of the energy dissipation functional \eqref{2.11} that will be proved to be integrable over $(0,+\infty)$ as given in \eqref{1.40} but does not enjoy any explicit time decay because of the techniques of the proof.
\end{remark}

Corresponding to the reference global Maxwellian $\mu$ in \eqref{1.38}, the Landau collision frequency is
\begin{equation}
\label{1.34}
\sigma^{ij}(\xi):=\phi^{ij}\ast \mu(\xi)=\int_{{\mathbb R}^3}\phi^{ij}(\xi-\xi')\mu(\xi')\,d\xi'.
\end{equation}
We remark that $\sigma^{ij}(\xi)$ is a positive definite symmetric matrix. We denote the
weighted $L^2$ norms as
$$
|w\Fg|^2_{2}:= \int_{{\mathbb R}^3}w^{2}\Fg^2\,d\xi,\ \ \ \|w\Fg\|^2:={ \int_{\mathbb R}\int_{{\mathbb
R}^3}}w^{2}\Fg^2\,d\xi dx.
$$
In terms of linearization of the nonlinear Landau operator around $\mu$ (cf.~\cite{Guo-2002}), with \eqref{1.34} we define the weighted dissipative norms:
$$|\Fg|^2_{\sigma,w}:=\sum_{i,j=1}^3\int_{{\mathbb
R}^3}w^{2}[\sigma^{ij}\partial_{\xi_i}\Fg\partial_{\xi_j}\Fg+\sigma^{ij}\frac{\xi_i}{2}\frac{\xi_j}{2}\Fg^2]\,d\xi,\ \mbox{and}\ \ \|\Fg\|_{\sigma,w}:= \||\Fg|_{\sigma,w} \|.$$
And let $|\Fg|_{\sigma}=|\Fg|_{\sigma,1}$ and $\|\Fg\|_{\sigma}=\|\Fg\|_{\sigma,1}$.
From \cite[Corollary 1, p.399]{Guo-2002} and \cite[Lemma 5, p.315]{Strain-Guo-2008}, one has
\begin{equation}
\label{1.35}
|\Fg|_\sigma\approx |\langle \xi\rangle^{-\frac{1}{2}}\Fg|_2+\Big|\langle \xi\rangle^{-\frac{3}{2}}\frac{\xi}{|\xi|}\cdot\nabla_\xi \Fg\Big|_2+\Big|\langle \xi\rangle^{-\frac{1}{2}}\frac{\xi}{|\xi|}\times \nabla_\xi \Fg\Big|_2.
\end{equation}
We also denote
$$\|\partial^\alpha_\beta
\Fg\|^2_{w(\beta)}:=\int_{\mathbb R}\int_{{\mathbb
R}^3}w^{2}(\beta)|\partial^\alpha_\beta \Fg(x,\xi)|^2\, d\xi dx,$$
and
$$
\|\partial^\alpha_\beta \Fg\|^2_{\sigma,w(\beta)}:=\sum_{i,j=1}^3\int_{\mathbb R}\int_{{\mathbb
R}^3}w^{2}(\beta)[\sigma^{ij}\partial_{\xi_i}\partial^\alpha_\beta \Fg(x,\xi)\partial_{\xi_j}\partial^\alpha_\beta \Fg(x,\xi)+\sigma^{ij}\frac{\xi_i}{2}\frac{\xi_j}{2}|\partial^\alpha_\beta \Fg(x,\xi)|^2]\, d\xi dx.
$$
Then, we introduce the instant energy
functional $\mathcal{E}_{2,l,q} (t)$ by
\begin{eqnarray}
\label{2.10}
\mathcal{E}_{2,l,q}(t)=\sum_{|\alpha|\leq
2}\|\partial^\alpha
[\widetilde{v},\widetilde{u},\widetilde{\theta}](t)\|^2+\sum_{|\alpha|+|\beta|\leq
2}\|\partial^\alpha_\beta \mathbf{g}(t)\|^2_{w(\beta)},
\end{eqnarray}
and the corresponding energy dissipation functional $\mathcal{D}_{2,l,q} (t)$ by
\begin{equation}
\label{2.11}
\mathcal{D}_{2,l,q}(t)=\sum_{1\leq|\alpha|\leq2}\|\partial^\alpha
[\widetilde{v} ,\widetilde{u} ,\widetilde{\theta} ](t)\|^2+\sum_{|\alpha|+|\beta|\leq2}\|\partial^\alpha_\beta \mathbf{g}(t)\|^2_{\sigma,w(\beta)}.
\end{equation}
We remark that as usual, the instant energy functional $\mathcal{E}_{2,l,q} (t)$ is assumed to be small enough a priori, which will be closed by the energy estimate in the end.



With the above preparations, the main result of this paper can
be stated as follows.

\begin{theorem}\label{thm1.1}
Let $M_{[1/\overline{v},\overline{u},\overline{\theta}](t,x)}(\xi)$ be the viscous contact wave given in Definition \ref{def.vcw} with the
small wave strength
$\delta=|\theta_{+}-\theta_{-}|>0$.
Then, there are a sufficiently small constant $\varepsilon_{0}>0$ and a generic constant $C_{0}>0$  such that if the initial data $F_{0}(x,\xi)\geq 0$ satisfies
\begin{align}
\label{1.39}
C_{0}[\mathcal{E}_{2,l,q}(0)+\delta]\leq \varepsilon^{2}_{0},
\end{align}
where in \eqref{1.32} and \eqref{1.36a} we have choosen $q_{1}=\varepsilon_0$ and  $q_{2}=\frac{1}{\tilde{C}_0\sqrt{\varepsilon_{0}}}$
with some positive constant  $\tilde{C}_0$ and $l\geq 2$ arbitrarily given,
 then the Cauchy problem  \eqref{1.16} and \eqref{1.17} on the Landau equation with Coulomb interaction \eqref{def.lk}  admits a unique  global solution $F(t,x,\xi)\geq 0$ satisfying
\begin{align}
\label{1.40}
\sup_{t\geq 0}\mathcal{E}_{2,l,q}(t)
+c_0\int^{+\infty}_{0}\mathcal{D}_{2,l,q}(t)\,dt\leq \varepsilon^{2}_{0},
\end{align}
for a generic constant $c_0>0$. Moreover, the solution tends in large time toward the viscous contact wave in the sense that
\begin{align}
\label{1.41}
\lim_{t\to +\infty}\|\frac{F(t,x,\xi)-M_{[1/\overline{v},\overline{u},\overline{\theta}](t,x)}(\xi)}{\sqrt{\mu}}\|_{L_{x}^{\infty}L_{\xi}^{2}}=0.
\end{align}
\end{theorem}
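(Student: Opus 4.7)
The strategy is a continuity argument for the quantity $\mathcal{E}_{2,l,q}(t)+c_0\int_0^t\mathcal{D}_{2,l,q}(s)\,ds$, combined with an a posteriori verification that the time-dependent exponent $q(t)$ remains strictly positive. Local existence of a solution near $\overline{M}$ in the weighted space $\mathcal{E}_{2,l,q}(0)<\infty$ follows by a standard iteration in the Guo-type weighted framework with $q(t)$ temporarily frozen at $q_1$. The task therefore reduces to proving the a priori estimate
\[
\sup_{0\le s\le T}\mathcal{E}_{2,l,q}(s)+c_0\int_0^T\mathcal{D}_{2,l,q}(s)\,ds\le C\bigl(\mathcal{E}_{2,l,q}(0)+\delta\bigr)
\]
under the working hypotheses $\sup_{[0,T]}\mathcal{E}_{2,l,q}\le\varepsilon_0^2$ and $q(s)\ge q_\infty/2>0$ on $[0,T]$.

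I would handle the macroscopic part first. Taking the collision-invariant moments $\psi_i$ in the Lagrangian Landau equation \eqref{1.16} and subtracting the approximate Navier--Stokes system satisfied by the contact wave produces a damped system for $[\widetilde v,\widetilde u,\widetilde\theta]$ whose source splits into a contact-wave residual (small and rapidly decaying, $O(\delta(1+t)^{-1-\vartheta})$) and non-fluid coupling through $\mathbf{g}$ and $\overline G$. Combining an $H^2_x$ energy identity with an anti-derivative or weighted low-order estimate for the hyperbolic component $\widetilde v$, in the spirit of Huang--Xin--Yang, yields $\|\partial^\alpha[\widetilde v,\widetilde u,\widetilde\theta]\|^2$ for $|\alpha|\le 2$ in the instant energy and $\|\partial^\alpha[\widetilde v,\widetilde u,\widetilde\theta]\|^2$ for $1\le|\alpha|\le 2$ in the dissipation. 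The latter is exactly $q_3^{I\!I}(t)$, so \eqref{1.40} forces $\int_0^\infty q_3^{I\!I}\,ds\lesssim\varepsilon_0^2$, while $\int_0^\infty q_3^{I}\,ds\lesssim\delta$ by the contact-wave decay of Remark~\ref{rem.q12}. With $q_1=\varepsilon_0$ and $q_2=1/(\tilde C_0\sqrt{\varepsilon_0})$ this gives $q(t)\ge \varepsilon_0-C\sqrt{\varepsilon_0}(\varepsilon_0^2+\delta)>0$ for all $t\ge 0$.

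The microscopic estimate is the heart of the proof. For each $|\alpha|+|\beta|\le 2$, I would test the equation for $\mathbf{g}$ against $w^2(\beta)\,\partial^\alpha_\beta\mathbf{g}$, sum, and obtain a differential inequality of the schematic form
\[
\tfrac12\tfrac{d}{dt}\sum_{|\alpha|+|\beta|\le 2}\|\partial^\alpha_\beta\mathbf{g}\|_{w(\beta)}^2+c\,\mathcal{D}^{\mathrm{mic}}_{2,l,q}(t)+q_2\,q_3(t)\!\!\sum_{|\alpha|+|\beta|\le 2}\!\!\|\langle\xi\rangle\,w(\beta)\,\partial^\alpha_\beta\mathbf{g}\|^2\le \mathcal{R}(t),
\]
where the quartic term in the third slot comes from $\partial_t w^2=-2q_2q_3(t)\langle\xi\rangle^2w^2$. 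In contrast with $\|\cdot\|_{\sigma,w(\beta)}$, which by \eqref{1.35} only controls $\langle\xi\rangle^{-1/2}$ moments in the non-radial directions because of the Coulomb degeneracy, this extra quartic dissipation controls a full $\langle\xi\rangle$-weighted norm. The nonlinear remainder $\mathcal{R}(t)$ collects commutator terms from the transport $(\xi_1-u_1)v^{-1}\partial_x$ losing velocity weight under $\partial_\beta$, the weighted nonlinear collision $\Gamma(\mathbf{g},\mathbf{g})$, and the coupling through $\overline G$ and through products of $\partial^{\alpha'}[\widetilde v,\widetilde u,\widetilde\theta,\bar v,\bar u,\bar\theta]$ with $\langle\xi\rangle^k\mathbf{g}$. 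Each such term is bounded by $\bigl(q_3^{I}(t)+q_3^{I\!I}(t)+\sqrt{\mathcal{E}_{2,l,q}}\bigr)\|\langle\xi\rangle w(\beta)\partial^\alpha_\beta\mathbf{g}\|^2$ plus genuine dissipative pieces, and is therefore absorbed into the quartic term once $q_2$ is large enough, i.e.\ once $\varepsilon_0$ is small enough.

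The main obstacle is exactly this last bookkeeping step: each velocity-growing factor in a nonlinear term must be paired with a macroscopic quantity already accounted for in $q_3(t)$, with the correct derivative order and without loose factors of $\bar v_x$ or $\widetilde u_x$; the specific form of $q_3$ in \eqref{1.33} is engineered to make this possible. Once closure is achieved, the continuity argument gives \eqref{1.40} globally, and the asymptotic convergence \eqref{1.41} follows by combining $\int_0^\infty\mathcal{D}_{2,l,q}(t)\,dt<\infty$ with a uniform bound on $\tfrac{d}{dt}\|\mathbf{g}(t)\|_{L^\infty_x L^2_\xi}^2$ deduced from \eqref{1.40} and the Sobolev embedding $H^1_x\hookrightarrow L^\infty_x$, together with the decay of $\|[\widetilde v,\widetilde u,\widetilde\theta]\|_{H^1_x}\to 0$ that controls the fluid Maxwellian difference $M_{[1/v,u,\theta]}-M_{[1/\bar v,\bar u,\bar\theta]}$.
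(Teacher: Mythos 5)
Your overall architecture --- the continuity argument, the quartic dissipation $q_2q_3(t)\sum\|\langle\xi\rangle\partial^\alpha_\beta\mathbf{g}\|^2_{w(\beta)}$ generated by $\partial_tw^2(\beta)=-2q_2q_3(t)\langle\xi\rangle^2w^2(\beta)$, the absorption of the velocity-growing commutators into that term by taking $q_2=1/(\tilde C_0\sqrt{\varepsilon_0})$ large, the a posteriori positivity of $q(t)$, and the interpolation argument for \eqref{1.41} --- coincides with the paper's proof. There is, however, a genuine gap in your treatment of the macroscopic part. You assert that the contact-wave residual in the system for $[\widetilde v,\widetilde u,\widetilde\theta]$ is ``small and rapidly decaying, $O(\delta(1+t)^{-1-\vartheta})$''. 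This is false for the contributions carrying one derivative of the profile: by \eqref{1.29}, $\|[\bar v_x,\bar\theta_x]\|^2\sim\delta(1+t)^{-1/2}$, and the source terms $\bar v_t,\bar\theta_t,\bar u_{1x}$ are only $O(\delta(1+t)^{-1}e^{-c_1x^2/(1+t)})$ pointwise. In the zeroth-order entropy estimate these are paired with $\widetilde v^2,\widetilde\theta^2$ and produce $C\delta\int_{\mathbb{R}}(\widetilde v^2+\widetilde\theta^2)\omega^2\,dx$ with $\omega$ as in \eqref{2.17}; since $\mathcal{D}_{2,l,q}$ contains no zeroth-order dissipation of $\widetilde v,\widetilde\theta$ and $\int_{\mathbb{R}}\omega^2\,dx\sim(1+t)^{-1/2}$ is not integrable in time, this term can be absorbed neither into the dissipation nor into an integrable remainder, and your Gronwall closure fails. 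The paper devotes Lemma \ref{lem5.6} to exactly this term, adapting the heat-kernel-weighted estimate of Huang--Li--Matsumura (Lemma \ref{lem5.5}, via the auxiliary functions $\mathfrak{f}$ and $\mathfrak{g}$) and the structure of the system \eqref{2.7}. Your fallback of an anti-derivative argument is precisely what the paper avoids (the authors note anti-derivatives would only be needed for explicit decay rates, which they leave open), and it would not spare you from producing this weighted space-time bound.

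A second, smaller omission: neither your macroscopic nor your microscopic sketch explains how the moments $\int\xi_1^2(L_M^{-1}\Theta_1)_x\,d\xi$ and their analogues in \eqref{2.7} are controlled. The paper rewrites them through the Burnett functions via \eqref{5.15}--\eqref{5.16}, so that the fast velocity decay \eqref{5.18} of $A_1,B_{1i}$ dominates the polynomial growth inside $\Theta_1$ and the result is bounded by $\|\partial^\alpha\mathbf{g}\|_\sigma$ plus profile terms; in the Coulomb case, where $|\cdot|_\sigma$ controls only negative powers of $\langle\xi\rangle$ by \eqref{1.35}, there is no cruder route to these bounds, so this step should be made explicit.
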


\subsection{Relevant literature}
Due to importance of the Landau equation with Coulomb interactions in plasma physics, a lot of fundamental mathematical investigations have been made. In particular, with focus on the spatially inhomogeneous case, we would mention Lions \cite{Lions} and Villani \cite{Vi98,Vi96} for the global existence of weak solutions up to a defect measure,  Desvillettes \cite{Des} and Alexandre-Villani \cite{AV04} for the grazing collision limit of the non-cutoff Boltzmann equation to the Landau equation, Degond-Lemou \cite{DL} for the spectrum analysis of the linearized Landau equation, and Bobylev-Pulvirenti-Saffirio \cite{BPS} for the derivation of the Landau equation from particle systems.

Moreover, closely related to the current work,  Guo \cite{Guo-2002} constructed global solutions to the Landau equation near global Maxwellians
in the torus, see also \cite{Hsiao-Yu}. The polynomial decay rate  and   the  exponential decay rate of the Landau equation near global Maxwellians in the torus were shown by Strain-Guo \cite{Strain-Guo-2006} and \cite{Strain-Guo-2008}, respectively. In the presence of self-consistent forces,
global solutions of the Vlasov-Poisson-Landau system near global Maxwellians have been obtained in Guo \cite{Guo-JAMS} for the torus and in Strain-Zhu \cite{Strain-Zhu} for the whole space, see also \cite{Duan, Wang,Wang-1}.
In addition to those works, great contributions also have been done in many other kinds of topics of the Landau equation, for instance \cite{CaMi, CTW, DV2000, GIMV, GHJO, HeSn, KGH,Luk} and references therein.	

As in the context of the Boltzmann equation, it is a fundamental problem to determine the global existence and large-time behavior of solutions to the Cauchy problem \eqref{1.1}, \eqref{ide} and \eqref{1.2} for the Landau equation  whenever initial data admit a small total variation in space variable over the whole line, in particular connecting two distinct global Maxwellians at infinities, cf.~\cite{SR,Ukai-Yang}. Based on the corresponding fluid dynamic approximation through the Euler or Navier-Stokes equations, one may expect to construct the solution in large time to be either one of the wave patterns, such as shock wave, rarefaction wave and contact wave, or their superposition, cf.~\cite{Smoller}. This can be motivated by the pointwise estimate of solutions via the method of Green functions systematically developed by Liu-Yu \cite{LY-G04}.

We recall some literatures for the existence and stability of wave patterns for the Boltzmann equation so as to make a comparison with the Landau case later.
Under the Grad's angular cutoff assumption, Caflisch-Nicolaenko \cite{Caflisch-Nicolaenko} constructed the shock profile solutions of the Boltzmann equation for hard potentials.
Liu-Yu \cite{Liu-Yu} and Yu \cite{Yu} established the positivity and large-time behavior of shock profile solutions of the Boltzmann equation for hard sphere model, respectively. Motivated by \cite{Huang-Yang}, Huang-Xin-Yang \cite{Huang-Xin-Yang} studied the stability of contact waves with general perturbations for hard potentials and Huang-Wang-Yang \cite{Huang-Wang-Yang} established the hydrodynamic limit with contact waves. The nonlinear stability of rarefaction waves to the Boltzmann equation  was studied in \cite{Liu-Yang-Yu-Zhao,Xin-Yang-Yu,Yang-Zhao}. In addition, the stability of nonlinear wave patterns to the Boltzmann equation with a self-consistent electric field for the hard sphere model has been  considered in \cite{Duan-Liu-2015,Li-Wang-Yang-Zhong} and references therein.   Here we would like to emphasize that in the context of viscous conservation laws, in particular for the compressible Navier-Stokes equations, the stability of contact waves has been extensively studied in the much earlier stage  by \cite{Huang-Li-Matsumura,HMS,Huang-Matsumura-Xin, Kawashima-M, Liu-Xin, Xin} and the references therein; see also a recent very nice survey by Matsumura \cite{Ma}.

Although the wave patterns of the Boltzmann equation with cutoff have been heavily studied as mentioned above, much less is known to the study of wave patterns on the non-cutoff Boltzmann or Landau equations for physically realistic long-range interactions. We would start to work on the project on the Landau equation first, in order to shed a little light on the non-cutoff Boltzmann case for the future. As such,  the first and third authors of this paper studied in \cite{Duan-Yu1} the nonlinear stability of rarefaction waves for the Landau equation with Coulomb potentials, and the current authors also obtained  in \cite{DYY} the small Knudsen rate of convergence to rarefaction waves.  However, the stability of viscous contact waves and  viscous shock profiles for the Landau equation still remains open.

In this work, we are devoted to showing the nonlinear stability of viscous contact waves  to the Landau equation with slab symmetry for the physical Coulomb interaction. More precisely, we construct the unique global  solution to the Landau equation around a local Maxwellian whose fluid quantities are
  viscous contact wave profiles, and we prove that such a local Maxwellian
is time-asymptotically stable. To the best of our knowledge, this seems the first result about the asymptotic stability of viscous contact waves under small perturbations for
the Landau equation. We remark that the explicit time rates of convergence to viscous contact waves as in \eqref{1.41} have to be left open, though they can be obtained via the technique of anti-derivatives in those fundamental works \cite{Huang-Matsumura-Xin,Huang-Xin-Yang,Huang-Yang} mentioned before.

\subsection{Key points of the proof}
As in two previous works \cite{Duan-Yu1,DYY}, the proof of the result is generally based on the analysis on
the compressible Euler and Navier-Stokes equations and decomposition of solutions with respect to the local Maxwellian that was initiated in \cite{Liu-Yu} and developed in \cite{Liu-Yang-Yu} for the Boltzmann theory. Main difficulties for treating the contact wave of the Landau equation are explained as follows.

Recall that by using the decomposition in \cite{Liu-Yu,Liu-Yang-Yu}, the strong velocity dissipation effect of the linearized  operator  and the anti-derivative techniques, the authors in \cite{Huang-Xin-Yang} can overcome the difficulties arising from the slow time-decay rate of the contact wave profiles and  the nonlinear terms $\frac{\xi_{1}}{v}G_{x}$ and $\frac{u_{1}}{v}G_{x}$ as in the microscopic equation \eqref{1.21}, so the dynamic stability of contact discontinuities for the  Boltzmann equation with cutoff hard potentials was proved. However, such a strong velocity dissipation effect in \cite{Huang-Xin-Yang}  is  not available for the Landau equation with Coulomb interactions because the linearized Landau operator $\mathcal{L}$ in \eqref{2.5}  lacks a spectral gap that results in the very weak velocity dissipation by \eqref{2.6} and \eqref{1.35}. Thus, the  approach in \cite{Huang-Xin-Yang} can not be applied to  the Landau equation with Coulomb interactions or even to the cutoff Boltzmann equation with soft potentials.

To  overcome the difficulties above,
we make a crucial use of the new time-velocity weight function $w(\beta)(t,\xi)$ given in \eqref{1.32}.  The factor $\exp\{q(t)\langle \xi\rangle^{2}\}$ in \eqref{1.32} is used to induce an extra {\it quartic} energy dissipation term
\begin{equation}
\label{def.qed}
q_2q_3(t)\sum_{|\alpha|+|\beta|\leq 2}
\|\langle \xi\rangle\partial^{\alpha}_{\beta} \mathbf{g}(t)\|^{2}_{w(\beta)}
\end{equation}
when treating the energy estimates on the nonlinear terms $\frac{\xi_{1}}{v}G_{x}$ and $\frac{u_{1}}{v}G_{x}$ as in the microscopic equation \eqref{1.21}.
One of the key observations is that
the function $q_{3}(t)$ constructed   by \eqref{1.33} is integrable in all time by using the high-order dissipation rate of the macroscopic component in the solution and the time-decay properties of the viscous contact wave profiles. Note that the extra dissipation \eqref{def.qed} is {\it quartic} due to the dependence of $q_3(t)$ on the normal energy dissipation as mentioned in Remark \ref{rem.q12}.  Different from \cite{Duan, Duan-Yu1}, the time-decay rate of solutions is unavailable in the current problem, so the new weighted energy method looks more robust with possible applications to many problems in the similar situation.  The  other factor $\langle \xi\rangle^{(l-|\beta|)}$ in \eqref{1.32} is used to take care of the derivative estimates of the free transport term and the time-asymptotic stability of the contact wave as in \cite{Guo-2002,Duan-Yu1}.

Since the terms $\|\overline{\theta}_x\|^2$ and $\|\overline{u}_x\|^2$ decay in time respectively at rates $(1+t)^{-1/2}$ and  $(1+t)^{-1}$ which are not integrable with respect to time, we need to consider the subtraction of $G(t,x,\xi)$ by $\overline{G}(t,x,\xi)$ as \eqref{2.3} to cancel these terms.
The inverse of linearized operator $L^{-1}_M$
defined as \eqref{1.22} is more complicated than the one in \cite{Huang-Xin-Yang} for the cutoff hard potential Boltzmann equation.
In order to handle the terms involving $L^{-1}_M$, we will make use of the Burnett functions $\hat{A}_i$ and $\hat{B}_{ij}$ as in \eqref{5.1},
see Section \ref{sec.6.1} for the basic properties of the Burnett functions.
Indeed, in terms of the Burnett functions, the terms involving $L^{-1}_M$, such as \eqref{2.14},
can be represented as the inner products of $A_i$ and $B_{ij}$ with $\Theta_{1}$ as in \eqref{1.22}, where $A_i$ and $B_{ij}$ defined in \eqref{5.2} are the inverse of $\hat{A}_i$ and $\hat{B}_{ij}$ under the linear operator $L_M$, respectively, see the identities \eqref{5.15}, \eqref{5.16} and \eqref{2.18} for details. Notice that  $A_i$ and $B_{ij}$ enjoy the fast velocity decay so as to bound any polynomial velocity growth in $\Theta_{1}$, see \eqref{5.18}.
This kind of technique will be used for the energy estimates on both  the macroscopic component $[\widetilde{v},\widetilde{u},\widetilde{\theta}]$
and the microscopic component  $\mathbf{g}$.

In addition, the term $\|[\overline{v}_x,\overline{\theta}_x]\|^2$ decays in time at a rate $(1+t)^{-1/2}$ that is much slower than the one in case we consider the rarefaction profiles in \cite{Duan-Yu1}. This results in the appearance of a difficult term
$$
\int_{\mathbb{R}}(\widetilde{v}^{2}+\widetilde{\theta}^{2})\omega^{2}\,dx
$$
in the energy estimates on the macroscopic component $[\widetilde{v},\widetilde{u},\widetilde{\theta}]$, where $\omega=\omega(t,x)$ is defined in \eqref{2.17}. In Lemma \ref{lem5.6}, we make use of some key observations from the compressible Navier-Stokes equations around contact waves in \cite{Huang-Li-Matsumura} as well as  the above Burnett function technique to get the estimates of such a difficult term.

In the end, to simplify the  energy estimates, we use the decompositions $F=M+\overline{G}+\sqrt{\mu} \mathbf{g}$ as in \cite{Duan-Yu1} to
improve the decompositions in \cite{Huang-Xin-Yang,Liu-Yang-Yu} such that some similar basic estimates
in \cite{Strain-Guo-2008,Wang} around global Maxwellians can be adopted in a convenient way for the current problem on the perturbation around local Maxwellians.

\subsection{Organization of the paper}
The rest of this paper is arranged as follows. In Section \ref{seca.2} we provide some preliminaries for the macro-micro decomposition and the basic properties of the viscous contact waves. In Section \ref{sec.2}, we will
reformulate the system,  make the a priori assumption and establish the non-weighted energy estimates.
In Section \ref{sec.3}, we will establish the  weighted energy
estimates. In Section \ref{sec.4}, we will establish the existence of global  solutions and the large-time asymptotic toward viscous contact waves of solutions to the Cauchy problem for the Landau equation \eqref{1.16} and \eqref{1.17}. In the appendix Section \ref{sec.5}, we will give some basic estimates frequently used in the previous sections.

\medskip

\noindent{\it Notations.} Throughout the paper we shall use $\langle \cdot , \cdot \rangle$  to denote the standard $L^{2}$ inner product in $\mathbb{R}_{\xi}^{3}$
with its corresponding $L^{2}$ norm $|\cdot|_2$. We also use $( \cdot , \cdot )$ to denote $L^{2}$ inner product in
$\mathbb{R}_{x}$ or $\mathbb{R}_{x}\times \mathbb{R}_{\xi}^{3}$  with its corresponding $L^{2}$ norm $\|\cdot\|$.
Let nonnegative integer $\alpha$ and $\beta$ be multi indices $\alpha=[\alpha_{0},\alpha_{1}]$ and $\beta=[\beta_{1},\beta_{2},\beta_{3}]$,
respectively. Denote
$\partial_{\beta}^{\alpha}=\partial_{t}^{\alpha_{0}}\partial_{x}^{\alpha_{1}}
\partial_{\xi_{1}}^{\beta_{1}}\partial_{\xi_{2}}^{\beta_{2}}\partial_{\xi_{3}}^{\beta_{3}}$.
If each component of $\beta$ is not greater than the corresponding one  of
$\overline{\beta}$, we use the standard notation
$\beta\leq\overline{\beta}$. And $\beta<\overline{\beta}$ means that
$\beta\leq\overline{\beta}$ and $|\beta|<|\overline{\beta}|$.
$C^{\bar\beta}_{\beta}$ is the usual  binomial coefficient.
Throughout the paper, generic positive constants are denoted  by either $c$ or $C$,
and $c_{1}$, $c_{2}$ or $C_{1}$, $C_{2}$ etc. are some given constants.
The notation $A\approx B$ is used to denote that there exists constant $c_{0}>1$
such that $c_{0}^{-1}B\leq A\leq c_{0}B$.

\section{Preliminaries}\label{seca.2}

\subsection{Macro-micro decomposition in Eulerian coordinates}

In the present and next subsections, we present the macro-micro decomposition for the Landau equation. To better understand the Lagrangian formulation in the next subsection, we first start with the formulation in the Eulerian coordinates for the convenience of readers.

Recall \eqref{1.3} and \eqref{1.5}. We denote an $L^{2}_{\xi}(\mathbb{R}^{3})$ inner product as
$\langle h,g\rangle =\int_{\mathbb{R}^{3}}h(\xi)g(\xi)\,d\xi$.
And the macroscopic kernel space
is spanned by the following
five pairwise-orthogonal base
\begin{equation}
\label{1.7}
\left\{
\begin{array}{rl}
&\chi_{0}(\xi)=\frac{1}{\sqrt{\rho}}M,
\\
&\chi_{i}(\xi)=\frac{\xi_{i}-u_{i}}{\sqrt{R\rho\theta}}M, \quad \mbox{for $i=1,2,3$,}
\\
&\chi_{4}(\xi)=\frac{1}{\sqrt{6\rho}}\big(\frac{|\xi-u|^{2}}{R\theta}-3\big)M,
\\
&\langle \chi_{i},\frac{\chi_{j}}{M}\rangle=\delta_{ij},
\quad i,j=0,1,2,3,4.
\end{array} \right.
\end{equation}
Using these five basic functions, we define
\begin{equation}
\label{1.8}
P_{0}h=\sum_{i=0}^{4}\langle h,\frac{\chi_{i}}{M}\rangle\chi_{i},\quad P_{1}h=h-P_{0}h,
\end{equation}
where $P_{0}$ and  $P_{1}$ are called the macroscopic projection and microscopic projection, respectively.
A function $h(\xi)$ is called microscopic or non-fluid if
\begin{equation}
\label{1.9}
\int_{\mathbb{R}^{3}}h(\xi)\psi_{i}(\xi)\,d\xi=0, \quad \mbox{for $i=0,1,2,3,4$}.
\end{equation}

For a non-trivial solution profile connecting two different global Maxwellians at $x=\pm\infty$,
we decompose the Landau equation \eqref{1.1} and its solution with respect to the local Maxwellian \eqref{1.5} as
\begin{equation*}
F=M+G, \quad P_{0}F=M, \quad P_{1}F=G,
\end{equation*}
where $M$ and $G$ represent the macroscopic and microscopic
component in the solution respectively. Due to the fact that $Q(M,M)=0$, the Landau equation \eqref{1.1} becomes
\begin{equation}
\label{1.11}
(M+G)_{t}+\xi_{1}(M+G)_{x}=L_{M}G+Q(G,G),
\end{equation}
  where the linearized Landau operator $L_{M}$ around the local Maxwellian $M$ is defined as
\begin{equation*}
L_{M}h:=Q(h,M)+Q(M,h).
\end{equation*}
 And its null space $\mathcal{N}$ is spanned by
$\{\chi_{i}, i=0,1,2,3,4\}$.

 Multiplying \eqref{1.11} by the collision invariants $\psi_{i}(\xi)$ 
and integrating the resulting equations with respect to $\xi$ over $\mathbb{R}^{3}$, one gets the following macroscopic system that
\begin{equation}
\label{1.12}
\begin{cases}
\rho_{t}+(\rho u_{1})_{x}=0,
\\
(\rho u_{1})_{t}+(\rho u_{1}^{2})_{x}+p_{x}=-\int_{\mathbb{R}^{3}} \xi^{2}_{1}G_{x}\, d\xi,
\\
(\rho u_{i})_{t}+(\rho u_{1}u_{i})_{x}=-\int_{\mathbb{R}^{3}} \xi_{1}\xi_{i}G_{x}\, d\xi, ~~i=2,3,
\\
\big(\rho (\theta+\frac{|u|^{2}}{2})\big)_{t}+\big(\rho u_{1}(\theta+\frac{|u|^{2}}{2})+pu_{1}\big)_{x}
=-\int_{\mathbb{R}^{3}} \frac{1}{2}\xi_{1}|\xi|^{2}G_{x}\, d\xi.
\end{cases}
\end{equation}
Here the pressure $p=R\rho\theta$, and we have used \eqref{1.3}, \eqref{1.4} and the fact that $G_{t}$ is microscopic by \eqref{1.9}.
\par
Applying the projection operator $P_{1}$ to \eqref{1.11}, we obtain the following microscopic equation that
\begin{align}
\label{1.13}
G_{t}+P_{1}(\xi_{1}G_{x})+P_{1}(\xi_{1}M_{x})
=L_{M}G+Q(G,G),
\end{align}
which implies that
\begin{equation}
\label{1.14}
G=L^{-1}_{M}[P_{1}(\xi_{1}M_{x})]+L^{-1}_{M}\Theta_{1},
\quad
\Theta_{1}:=G_{t}+P_{1}(\xi_{1}G_{x})-Q(G,G).
\end{equation}
Substituting the expression of $G$ in \eqref{1.14} into \eqref{1.12},
we further obtain the following fluid-type system
\begin{equation}
\label{1.15}
\begin{cases}
\rho_{t}+(\rho u_{1})_{x}=0,
\\
(\rho u_{1})_{t}+(\rho u_{1}^{2})_{x}+p_{x}=\frac{4}{3}(\mu(\theta)u_{1x})_{x}-(\int_{\mathbb{R}^{3}} \xi^{2}_{1}L^{-1}_{M}\Theta_{1}\, d\xi)_{x},
\\
(\rho u_{i})_{t}+(\rho u_{1}u_{i})_{x}=(\mu(\theta)u_{ix})_{x}-(\int_{\mathbb{R}^{3}} \xi_{1}\xi_{i}L^{-1}_{M}\Theta_{1}\, d\xi)_{x}, ~~i=2,3,
\\
\big(\rho (\theta+\frac{|u|^{2}}{2})\big)_{t}+\big(\rho u_{1}(\theta+\frac{|u|^{2}}{2})+pu_{1}\big)_{x}=(\kappa(\theta)\theta_{x})_{x}+\frac{4}{3}(\mu(\theta)u_{1}u_{1x})_{x}
\\
\ \ \hspace{3cm}+(\mu(\theta)u_{2}u_{2x})_{x}+(\mu(\theta)u_{3}u_{3x})_{x}
-\frac{1}{2}(\int_{\mathbb{R}^{3}}\xi_{1}|\xi|^{2}L^{-1}_{M}\Theta_{1}\, d\xi)_{x}.
\end{cases}
\end{equation}
Here the viscosity coefficient $\mu(\theta)>0$ and the heat conductivity coefficient $\kappa(\theta)>0$  are smooth functions depending only on $\theta$.
The explicit formula of  $\mu(\theta)$ and $\kappa(\theta)$ are defined by \eqref{5.3}.

\subsection{Macro-micro decomposition in Lagrangian coordinates}

As mentioned before, it is necessary for us to reformulate the problem in the Lagrangian coordinates. Recall the coordinate transform \eqref{def.lct} as well as the reformulated Cauchy problem \eqref{1.16} and \eqref{1.17}  in the Lagrangian coordinates. In terms of \eqref{def.lct}, it is then direct to obtain all the formulations similar to those in the previous subsection.  In fact,
with \eqref{def.vrho}, the five pairwise-orthogonal base in \eqref{1.7} becomes
\begin{equation*}
\left\{
\begin{array}{rl}
&\chi_{0}(\xi)=\sqrt{v}M,
\\
&\chi_{i}(\xi)=\sqrt{v}\frac{\xi_{i}-u_{i}}{\sqrt{R\theta}}M, \quad \mbox{for $i=1,2,3$,}
\\
&\chi_{4}(\xi)=\sqrt{v}\frac{1}{\sqrt{6}}\big(\frac{|\xi-u|^{2}}{R\theta}-3\big)M,
\\
&\langle \chi_{i},\frac{\chi_{j}}{M}\rangle=\delta_{ij},
\quad i,j=0,1,2,3,4.
\end{array} \right.
\end{equation*}
Using these five basic functions, $P_{0}$ and $P_{1}$ in \eqref{1.8} can rewrite as
\begin{equation}
\label{1.20a}
P_{0}h=\sum_{i=0}^{4}\langle h,\frac{\chi_{i}}{M}\rangle\chi_{i},\quad P_{1}h=h-P_{0}h.
\end{equation}
By using the fact that $F=M+G$, the macroscopic system \eqref{1.12} and \eqref{1.15}
in the Lagrangian coordinates become, respectively
\begin{equation}
\label{1.19}
\begin{cases}
v_{t}-u_{1x}=0,
\\
u_{1t}+p_{x}=-\int_{\mathbb{R}^{3}} \xi^{2}_{1}G_{x}\,d\xi,
\\
u_{it}=-\int_{\mathbb{R}^{3}} \xi_{1}\xi_{i}G_{x}\,d\xi, ~~i=2,3,
\\
(\theta+\frac{|u|^{2}}{2})_{t}+(pu_{1})_{x}
=-\int_{\mathbb{R}^{3}} \frac{1}{2}\xi_{1}|\xi|^{2}G_{x}\,d\xi,
\end{cases}
\end{equation}
and
\begin{equation}
\label{1.20}
\begin{cases}
v_{t}-u_{1x}=0,
\\
u_{1t}+p_{x}=\frac{4}{3}(\frac{\mu(\theta)}{v}u_{1x})_{x}-(\int_{\mathbb{R}^{3}} \xi^{2}_{1}L^{-1}_{M}\Theta_{1}\, d\xi)_{x},
\\
u_{it}=(\frac{\mu(\theta)}{v}u_{ix})_{x}-(\int_{\mathbb{R}^{3}} \xi_{1}\xi_{i}L^{-1}_{M}\Theta_{1}\, d\xi)_{x}, ~~i=2,3,
\\
(\theta+\frac{|u|^{2}}{2})_{t}+(pu_{1})_{x}
=(\frac{\kappa(\theta)}{v}\theta_{x})_{x}+\frac{4}{3}(\frac{\mu(\theta)}{v}u_{1}u_{1x})_{x}
\\
\ \ \hspace{3cm} +(\frac{\mu(\theta)}{v}u_{2}u_{2x})_{x}+(\frac{\mu(\theta)}{v}u_{3}u_{3x})_{x}
-\frac{1}{2}(\int_{\mathbb{R}^{3}}\xi_{1}|\xi|^{2}L^{-1}_{M}\Theta_{1}\, d\xi)_{x},
\end{cases}
\end{equation}
where the pressure $p=\frac{2\theta}{3v}$. Moreover, the microscopic equation \eqref{1.13} becomes
\begin{align}
\label{1.21}
G_{t}-\frac{u_{1}}{v}G_{x}+\frac{1}{v}P_{1}(\xi_{1}G_{x})+\frac{1}{v}P_{1}(\xi_{1}M_{x})
=L_{M}G+Q(G,G),
\end{align}
which implies that
\begin{equation}
\label{1.22}
G=L^{-1}_{M}[\frac{1}{v}P_{1}(\xi_{1}M_{x})]+L^{-1}_{M}\Theta_{1},
\quad
\Theta_{1}:=G_{t}-\frac{u_{1}}{v}G_{x}+\frac{1}{v}P_{1}(\xi_{1}G_{x})-Q(G,G).
\end{equation}

\subsection{Viscous contact waves}\label{subsec.vcw}

Now we turn to  define the contact wave profile for the Landau equation \eqref{1.16} and \eqref{1.17} as in \cite{Huang-Matsumura-Xin,
Huang-Xin-Yang}. If we take the microscopic component $G$ be to zero in \eqref{1.19}, we have the following compressible Euler system that
\begin{equation}
\label{1.23}
\begin{cases}
v_{t}-u_{1x}=0,
\\
u_{1t}+p_{x}=0,
\\
u_{it}=0, ~~i=2,3,
\\
(\theta+\frac{|u|^{2}}{2})_{t}+(pu_{1})_{x}=0,
\end{cases}
\end{equation}
with a Riemann initial data
\begin{equation}
\label{1.24}
[v_{0},u_{0},\theta_{0}](x)
=\begin{cases}
[v_{+},u_{+},\theta_{+}],\quad x>0,
\\
[v_{-},u_{-},\theta_{-}],\quad x<0.
\end{cases}
\end{equation}
Here $u_{\pm}=(u_{1\pm},0,0)^{t}$, $v_{\pm}>0$ and $\theta_{\pm}>0$
are given constants as in \eqref{1.17}.
It is well known that the Riemann problem \eqref{1.23} and \eqref{1.24} admits a contact discontinuity solution (cf. \cite{Smoller})
\begin{align}
\label{1.25}
[\overline{V},\overline{U},\overline{\Theta}](t,x)
=\begin{cases}
[v_{+},u_{+},\theta_{+}],\quad x>0,
\\
[v_{-},u_{-},\theta_{-}],\quad x<0,
\end{cases}
\end{align}
under the conditions that
\begin{equation*}
u_{-}=u_{+},\quad  p_{-}:=\frac{R\theta_{-}}{v_{-}}=\frac{R\theta_{+}}{v_{+}}=:p_{+},\quad v_{-}\neq v_{+}.
\end{equation*}
Note that \eqref{1.20} becomes the compressible Navier-Stokes equations by letting $\Theta_{1}$ be  zero. By using  the mass equation \eqref{1.20}$_{1}$ and
the energy equation \eqref{1.20}$_{4}$ with $p_{+}=\frac{2\theta}{3v}$, we obtain a nonlinear diffusion equation as follows
(cf. \cite{HMS,Huang-Xin-Yang,Huang-Matsumura-Xin})
\begin{equation}
\label{1.26}
\theta_{t}=(a(\theta)\theta_{x})_{x}, \quad
a(\theta)=\frac{9\kappa(\theta) p_{+}}{10\theta}>0,
\end{equation}
which admits a unique self-similar solution $\Theta(\zeta)$  with $\zeta=\frac{x}{\sqrt{1+t}}$ satisyfing the boundary conditions $\Theta(t,\pm\infty)=\theta_{\pm}$. Moreover, there exists a constant $c_{1}>0$ depending only on $\theta_{\pm}$ such that for any $t\geq0$ and $x\in \R$, it holds that
\begin{align}
\label{1.27}
(1+t)^{3/2}|\Theta_{xxx}|+(1+t)|\Theta_{xx}|+(1+t)^{1/2}|\Theta_{x}|+|\Theta-\theta_{\pm}|\leq C\delta e^{-\frac{c_{1}x^{2}}{1+t}},
\end{align}
where $\delta=|\theta_{+}-\theta_{-}|$ is the strength of the diffusion wave $\Theta$ and $C>0$ is a generic constant.
Then we can define the contact wave profile $(\bar{v},\bar{u},\bar{\theta})(t,x)$ as follows
\begin{align}
\label{1.28}
\bar{v}=\frac{2}{3p_{+}}\Theta,\quad \bar{u}_{1}=u_{1-}+\frac{2a(\Theta)}{3p_{+}}\Theta_{x},\quad \bar{u}_{2}=\bar{u}_{3}=0, \quad \bar{\theta}=\Theta.
\end{align}
By \eqref{1.26}, \eqref{1.27} and \eqref{1.28}, for any $q\geq1 $ and any integer
$k\geq 1$,  we can verify that $(\bar{v},\bar{u},\bar{\theta})(t,x)$ has the following properties
\begin{equation}
\label{1.29}
\left\{
\begin{array}{rl}
&\|\partial^{k}_{x}[\bar{v},\bar{\theta}]\|_{L^{q}}
+(1+t)^{\frac{1}{2}}\|\partial^{k}_{x}\bar{u}\|_{L^{q}}
\leq C\delta (1+t)^{-\frac{1}{2}(k-\frac{1}{q})},
\\
&\|\partial^{k}_{t}[\bar{v},\bar{\theta}]\|_{L^{q}}
+(1+t)^{\frac{1}{2}}\|\partial^{k}_{t}\bar{u}\|_{L^{q}}
\leq C\delta (1+t)^{-(k-\frac{1}{2q})}.
\end{array}\right.
\end{equation}
In view of \eqref{1.25}, \eqref{1.26}, \eqref{1.27} and \eqref{1.28}, we can obtain
\begin{align*}
\|[\bar{v}-\overline{V},\bar{u}-\overline{U},\overline{\theta}-\overline{\Theta}]\|_{L^{q}}
\leq C\left(\max_{\Theta\in[\min\{\theta_{-},\theta_{+}\},\,\max\{\theta_{-},\theta_{+}\}]}\kappa(\Theta)\right)^{\frac{1}{2q}}(1+t)^{\frac{1}{2q}},\quad q\geq 1,
\end{align*}
which means that the viscous contact wave $(\bar{v},\bar{u},\bar{\theta})$  defined in \eqref{1.28}
can be regarded as a local-in-time smooth approximation to the contact discontinuity solution
$[\overline{V},\overline{U},\overline{\Theta}]$ for the Euler system \eqref{1.23} in $L^{q}$-norm $(q\geq1)$
as the heat conductivity coefficient $\kappa(\cdot)$ tends to zero.
More importantly, $[\bar{v},\bar{u},\bar{\theta}]$ satisfies
\begin{align}
\label{1.30}
\begin{cases}
\bar{v}_{t}-\bar{u}_{1x}=0,
\\
\bar{u}_{1t}+\bar{p}_{x}
=\frac{4}{3}(\frac{\mu(\bar{\theta})}{\bar{v}}\bar{u}_{1x})_{x}+\mathcal{R}_{1},
\\
\bar{u}_{it}=(\frac{\mu(\bar{\theta})}{\bar{v}}\bar{u}_{ix})_{x}, ~~i=2,3,
\\
(\bar{\theta}+\frac{|\bar{u}|^{2}}{2})_{t}+(\bar{p}\bar{u}_{1})_{x}=(\frac{\kappa(\bar{\theta})}{\bar{v}}\bar{\theta}_{x})_{x}
+\frac{4}{3}(\frac{\mu(\bar{\theta})}{\bar{v}}\bar{u}_{1}\bar{u}_{1x})_{x}+\mathcal{R}_{2},
\end{cases}
\end{align}
where $\bar{p}=\frac{2\bar{\theta}}{3\bar{v}}=p_{+}$ and
\begin{align*}
\mathcal{R}_{1}=\bar{u}_{1t}-\frac{4}{3}(\frac{\mu(\bar{\theta})}{\bar{v}}\bar{u}_{1x})_{x},\quad
\mathcal{R}_{2}=\bar{u}\bar{u}_{t}-\frac{4}{3}(\frac{\mu(\bar{\theta})}{\bar{v}}\bar{u}_{1}\bar{u}_{1x})_{x}.
\end{align*}
These facts will be frequently used in the proofs later on.


\section{Non-weighted energy estimates}\label{sec.2}

In this section, we will deduce the energy estimates for the Cauchy problem \eqref{1.16} and \eqref{1.17}.
We first reformulate the system and make the a priori assumption in subsection \ref{sub2.1}.
Then we derive the lower order energy estimates  and establish the high order energy estimates in subsection \ref{sub2.2} and  subsection \ref{sub2.3}, respectively. Those energy estimates are carried out without any weight function for the time being. The weighted energy estimates will be made in the next section.

\subsection{Reformulated system}\label{sub2.1}
We will first derive the equation of the microscopic component $\mathbf{g}$ in \eqref{2.1}. Since the term
$\frac{1}{v}P_1(\xi_1M_x)$ in \eqref{1.21} contains $\| \overline{\theta}_{x}\|^2$ and the time decay of $\| \overline{\theta}_{x}\|^2$ is $(1+t)^{-\frac{1}{2}}$ by \eqref{1.29}, which is not integrable about the time $t$, we need to subtract $\overline{G}$ in \eqref{2.3} from $G$ to cancel this term as in \cite{Huang-Xin-Yang,Liu-Yang-Yu-Zhao}. Thus one has from \eqref{2.3} and \eqref{1.5} that
\begin{align*}
\frac{1}{v}P_{1}(\xi_{1}M_{x})=\frac{1}{v}P_{1}\xi_{1}M\big\{\frac{|\xi-u|^{2}
\widetilde{\theta}_{x}}{2R\theta^{2}}+\frac{(\xi-u)\cdot\widetilde{u}_{x}}{R\theta}\big\}
+L_{M}\overline{G}.
\end{align*}
Recalling that  $G=\overline{G}+\sqrt{\mu}\mathbf{g}$, by \eqref{1.21} we derive the equation of the microscopic component $\mathbf{g}$
as follows
\begin{align}
\label{2.4}
v\mathbf{g}_{t}+\xi_{1}\mathbf{g}_{x}-u_{1}\mathbf{g}_{x}
&=v\mathcal{L}\mathbf{g}+v\Gamma(\mathbf{g},\frac{M-\mu}{\sqrt{\mu}})+
v\Gamma(\frac{M-\mu}{\sqrt{\mu}},\mathbf{g})+v\Gamma(\frac{G}{\sqrt{\mu}},\frac{G}{\sqrt{\mu}})
\nonumber\\
&\quad+\frac{P_{0}(\xi_{1}\sqrt{\mu}\mathbf{g}_{x})}{\sqrt{\mu}}
-\frac{1}{\sqrt{\mu}}P_{1}\xi_{1}M\big\{\frac{|\xi-u|^{2}
\widetilde{\theta}_{x}}{2R\theta^{2}}+\frac{(\xi-u)\cdot\widetilde{u}_{x}}{R\theta}\big\}
\nonumber\\
&\quad-\frac{P_{1}(\xi_{1}\overline{G}_{x})}{\sqrt{\mu}}+u_{1}\frac{\overline{G}_{x}}{\sqrt{\mu}}
-v\frac{\overline{G}_{t}}{\sqrt{\mu}},
\end{align}
where $\Gamma$ and $\mathcal{L}$ are defined by
\begin{equation}
\label{2.5}
\Gamma(f,g):=\frac{1}{\sqrt{\mu}}Q(\sqrt{\mu}f,\sqrt{\mu}g),
\quad \mathcal{L}f:=\Gamma(\sqrt{\mu},f)+\Gamma(f,\sqrt{\mu}).
\end{equation}
Here  we have used the fact that
\begin{equation*}
\frac{1}{\sqrt{\mu}}L_{M}(\sqrt{\mu}f)=
\frac{1}{\sqrt{\mu}}\{Q(M,\sqrt{\mu}f)+Q(\sqrt{\mu}f,M)\}
=\mathcal{L}f+\Gamma(f,\frac{M-\mu}{\sqrt{\mu}})+
\Gamma(\frac{M-\mu}{\sqrt{\mu}},f).
\end{equation*}
Note that  the linearized Landau operator $\mathcal{L}$ is self-adjoint and non-positive definite, and its null space $\ker\mathcal{L}$ is spanned by the five functions $\{\sqrt{\mu},\xi\sqrt{\mu},|\xi|^{2}\sqrt{\mu}\}$, cf. \cite{Guo-2002}. Moreover,  there exists a constant $c_{2}>0$ such that
\begin{equation}
\label{2.6}
-\langle\mathcal{L}g, g \rangle\geq c_{2}|g|^{2}_{\sigma},
\end{equation}
for any $g\in (\ker\mathcal{L})^{\perp}$,

On the other hand, by using \eqref{1.30} and \eqref{1.20},
we obtain the system for the perturbation $[\widetilde{v},\widetilde{u},\widetilde{\theta}]$ in \eqref{2.1} as follows
\begin{equation}
\label{2.7}
\begin{cases}
\widetilde{v_{t}}-\widetilde{u}_{1x}=0,
\\
\widetilde{u}_{1t}+(p-p_{+})_{x}=\frac{4}{3}\big(\frac{\mu(\theta)}{v}u_{1x}\big)_{x}
-\bar{u}_{1t}-\int_{\mathbb{R}^{3}} \xi^{2}_{1}(L^{-1}_{M}\Theta_{1})_{x} \,d\xi,
\\
\widetilde{u}_{it}=\big(\frac{\mu(\theta)}{v}\widetilde{u}_{ix}\big)_{x}
-\int_{\mathbb{R}^{3}} \xi_{1}\xi_{i}(L^{-1}_{M}\Theta_{1})_{x} \,d\xi, ~~i=2,3,
\\
\widetilde{\theta}_{t}+pu_{1x}-p_{+}\bar{u}_{1x}
=\big(\frac{\kappa(\theta)}{v}\theta_{x}-\frac{\kappa(\bar{\theta})}{\bar{v}}\bar{\theta}_{x}\big)_{x}
+Q_{1}
\\
\hspace{1cm}+u\cdot\int_{\mathbb{R}^{3}} \xi\xi_{1}(L^{-1}_{M}\Theta_{1})_{x} \,d\xi
-\frac{1}{2}\int_{\mathbb{R}^{3}}\xi_{1}|\xi|^{2}(L^{-1}_{M}\Theta_{1})_{x} \,d\xi,
\end{cases}
\end{equation}
where we have used the facts that $p_{+}=\bar{p}=\frac{2\bar{\theta}}{3\bar{v}}$ and $\bar{u}_{2}=\bar{u}_{3}=0$ as well as
\begin{equation}
\label{2.8}
Q_{1}=\frac{4}{3}\frac{\mu(\theta)}{v}u^{2}_{1x}
+\frac{\mu(\theta)}{v}u^{2}_{2x}+\frac{\mu(\theta)}{v}u^{2}_{3x}.
\end{equation}

To prove the global existence of the solution in Theorem \ref{thm1.1},
the key point is to establish uniform energy estimates on the
macroscopic part $[\widetilde{v},\widetilde{u},\widetilde{\theta}]$
and the microscopic part $\mathbf{g}$. 
For an arbitrary time $T>0$, we shall make the following a priori assumption:
\begin{align}
\label{2.9}
\sup_{0\leq t\leq T}\mathcal{E}_{2,l,q}(t)+C_{1}\int_0^T\mathcal{D}_{2,l,q}(s)\,ds\leq  \varepsilon^{2}_{0},
\end{align}
where $\varepsilon_{0}>0$ is a small constant and $C_{1}>0$ is a constant to be determined in the end of the proof, see Section \ref{sec.4} later on. We remark that all the estimates below are independent of $T$.

With \eqref{2.9}, \eqref{1.32}, \eqref{1.33} and the assumptions in Theorem \ref{thm1.1}, for any $t\in(0,T]$,  we arrive at
\begin{align}
\label{2.9a}
 q_3(t)+ \int_0^tq_3(s)\,ds\leq  \tilde{C}_0\varepsilon^{2}_{0} ,\ \ \ q_2\big(q_3(t)+\int_0^tq_3(s)\,ds\big)\in(0,  \varepsilon_{0}^{\frac{3}{2}}),\ \ \ \mbox{and} \ \ \sup_{0\leq t\leq T}q(t)\in(0,  \varepsilon_{0}).
\end{align}
By \eqref{1.37}, the a priori assumption \eqref{2.9} and the imbedding inequality,  we have that for any small $\eta_0>0$ and $\varepsilon_0>0$, it holds that
\begin{equation}
\label{2.12}
|v(t,x)-1|+|u(t,x)|+|\theta(t,x)-\frac{3}{2}|<C(\eta_{0}+\varepsilon_{0}),\quad
1<\theta(t,x)<3,
\end{equation}
uniformly in all $(t,x)$.
\subsection{Lower order energy estimates}\label{sub2.2}
In this subsection, we derive the lower order energy estimates for the macroscopic
component $[\widetilde{v},\widetilde{u},\widetilde{\theta}]$
and the microscopic component  $\mathbf{g}$. First,
multiplying  \eqref{2.7}$_{2}$ by $\widetilde{u}_{1}$ and \eqref{2.7}$_{3}$ by $\widetilde{u}_{i}$ $(i=2,3)$, then adding the resulting equations
together and using the fact that $p-p_{+}=\frac{R\widetilde{\theta}-p_{+}\widetilde{v}}{v}$, we have
$$
(\frac{\widetilde{u}^{2}}{2})_{t}-\frac{R\widetilde{\theta}-p_{+}\widetilde{v}}{v}\widetilde{u}_{1x}=-\frac{4}{3}\frac{\mu(\theta)}{v}u_{1x}\widetilde{u}_{1x}
-\sum^{3}_{i=2}\frac{\mu(\theta)}{v}\widetilde{u}^{2}_{ix}
-\bar{u}_{1t}\widetilde{u}_{1}-\widetilde{u}\cdot\int_{\mathbb{R}^{3}}\xi \xi_{1}(L^{-1}_{M}\Theta_{1})_{x}\, d\xi+(\cdot\cdot\cdot)_{x}.
$$
Here and in the sequel the notation $(\cdot\cdot\cdot)_{x}$ represents the term in the conservative form so that
it vanishes after integration. Multiplying \eqref{2.7}$_{1}$ by $\frac{p_{+}}{v}\widetilde{v}$
gives that
$$
\frac{p_{+}}{v}\widetilde{v}\widetilde{u}_{1x}=\frac{p_{+}}{v}\widetilde{v}\widetilde{v_{t}}
=-\frac{2}{3}\bar{\theta}(\frac{1}{v}-\frac{1}{\bar{v}})\widetilde{v_{t}}
=\big(\frac{2}{3}\bar{\theta}\Phi(\frac{v}{\bar{v}})\big)_{t}+\bar{p}\Phi(\frac{\bar{v}}{v})\bar{v}_{t},
$$
where we have used the facts that $p_{+}=\bar{p}$ and $\Phi(s)=s-\ln s-1$. Multiplying \eqref{2.7}$_{4}$ by $\frac{\widetilde{\theta}}{\theta}$ gives
\begin{align*}
\frac{\widetilde{\theta}}{\theta}\widetilde{\theta}_{t}=&-\frac{R}{v}\widetilde{\theta}\widetilde{u}_{1x}
+\frac{\widetilde{\theta}}{\theta}(p_{+}-p)\bar{u}_{1x}
-(\frac{\widetilde{\theta}}{\theta})_{x}\big(\frac{\kappa(\theta)}{v}\theta_{x}-\frac{\kappa(\bar{\theta})}{\bar{v}}\bar{\theta}_{x}\big)
+\frac{\widetilde{\theta}}{\theta}Q_{1}
\nonumber\\
&+\frac{\widetilde{\theta}}{\theta}u\cdot\int_{\mathbb{R}^{3}} \xi\xi_{1}(L^{-1}_{M}\Theta_{1})_{x}\,d\xi
-\frac{1}{2}\frac{\widetilde{\theta}}{\theta}\int_{\mathbb{R}^{3}}\xi_{1}|\xi|^{2}(L^{-1}_{M}\Theta_{1})_{x}\, d\xi+(\cdot\cdot\cdot)_{x}.
\end{align*}
Note that
$$
\big(\bar{\theta}\Phi(\frac{\theta}{\bar{\theta}})\big)_{t}=\frac{\widetilde{\theta}}{\theta}\widetilde{\theta}_{t}
-\Phi(\frac{\bar{\theta}}{\theta})\bar{\theta}_{t}.
$$
Combining the above   equalities, we   arrive at
\begin{align}
\label{2.13}
&\Big(\frac{2}{3}\bar{\theta}\Phi(\frac{v}{\bar{v}})+\frac{1}{2}\widetilde{u}^{2}
+\bar{\theta}\Phi(\frac{\theta}{\bar{\theta}})\Big)_{t}+\frac{4}{3}\frac{\mu(\theta)}{v}\widetilde{u}^{2}_{1x}
+\sum_{i=2}^{3}\frac{\mu(\theta)}{v}\widetilde{u}^{2}_{ix}+\frac{\kappa(\theta)}{v\theta}\widetilde{\theta}^{2}_{x}+(\cdot\cdot\cdot)_{x}
\nonumber\\
&=-\bar{p}\Phi(\frac{\bar{v}}{v})\bar{v}_{t}
-\Phi(\frac{\bar{\theta}}{\theta})\bar{\theta}_{t}+\frac{\widetilde{\theta}}{\theta}(p_{+}-p)\bar{u}_{1x}
-\frac{\widetilde{\theta}_{x}}{\theta}(\frac{\kappa(\theta)}{v}-\frac{\kappa(\bar{\theta})}{\bar{v}})\bar{\theta}_{x}
\nonumber\\
&\qquad+\frac{\widetilde{\theta}\theta_{x}}{\theta^{2}}(\frac{\kappa(\theta)\theta_{x}}{v}-\frac{\kappa(\bar{\theta})\bar{\theta}_{x}}{\bar{v}})
-\frac{4}{3}\frac{\mu(\theta)}{v}\bar{u}_{1x}\widetilde{u}_{1x}-\bar{u}_{1t}\widetilde{u}_{1}
+\frac{\widetilde{\theta}}{\theta}Q_{1}+\mathbb{H}.
\end{align}
Here the term $\mathbb{H}$ is given by
\begin{equation}
\label{2.14}
\mathbb{H}=-\widetilde{u}\cdot\int_{\mathbb{R}^{3}}\xi \xi_{1}(L^{-1}_{M}\Theta_{1})_{x} \,d\xi
+\frac{\widetilde{\theta}}{\theta}\big\{ u\cdot\int_{\mathbb{R}^{3}} \xi\xi_{1}(L^{-1}_{M}\Theta_{1})_{x}\,d\xi
-\frac{1}{2}\int_{\mathbb{R}^{3}}\xi_{1}|\xi|^{2}(L^{-1}_{M}\Theta_{1})_{x} \,d\xi\big\}.
\end{equation}
First note that $\Phi'(1)=0$ and $\Phi(s)$ is strictly convex around $s=1$, we can obtain
\begin{equation}
\label{2.15}
\Phi(\frac{v}{\bar{v}})\approx\widetilde{v}^{2},
\quad \Phi(\frac{\theta}{\bar{\theta}})\approx\widetilde{\theta}^{2}.
\end{equation}

Since both $\mu(\theta)$ and $\kappa(\theta)$ are positive smooth functions about $\theta$, there exists $c_{3}>1$ such that
$c^{-1}_{3}\leq\mu(\theta)\leq c_{3}$ and $c^{-1}_{3}\leq\kappa(\theta)\leq c_{3}$.  For any $\lambda\in(0,c_{1}/4]$ with $c_{1}$ as in \eqref{1.27}, we denote
\begin{align}
\label{2.17}
\omega(t,x)=(1+t)^{-\frac{1}{2}}\exp\big(-\frac{\lambda x^{2}}{1+t}\big).
\end{align}
We thus have from this, \eqref{2.15}, \eqref{1.27}, \eqref{1.28}, \eqref{1.29} and the H\"{o}lder inequality that
\begin{align*}
\int_{\mathbb{R}}&\Big\{-\bar{p}\Phi(\frac{\bar{v}}{v})\bar{v}_{t}
-\Phi(\frac{\bar{\theta}}{\theta})\bar{\theta}_{t}+\frac{\widetilde{\theta}}{\theta}(p_{+}-p)\bar{u}_{1x}
-\frac{\widetilde{\theta}_{x}}{\theta}(\frac{\kappa(\theta)}{v}-\frac{\kappa(\bar{\theta})}{\bar{v}})\bar{\theta}_{x}\Big\}\, dx
\nonumber\\
&\leq C\delta(1+t)^{-1}\int_{\mathbb{R}}e^{-\frac{c_{1}x^{2}}{1+t}}(\widetilde{v}^{2}+\widetilde{\theta}^{2})\,dx
+C\delta\|\widetilde{\theta}_{x}\|^{2}
\nonumber\\
&\leq C\delta\int_{\mathbb{R}}(\widetilde{v}^{2}+\widetilde{\theta}^{2})\omega^{2}\,dx+C\delta\|\widetilde{\theta}_{x}\|^{2},
\end{align*}
where we have used  the facts that $p-p_{+}=\frac{R\widetilde{\theta}-p_{+}\widetilde{v}}{v}$.
Similarly, it holds that
\begin{align*}
\int_{\mathbb{R}} \frac{\widetilde{\theta}\theta_{x}}{\theta^{2}}(\frac{\kappa(\theta)\theta_{x}}{v}-\frac{\kappa(\bar{\theta})\bar{\theta}_{x}}{\bar{v}})
dx&\leq C\int_{\mathbb{R}} |\widetilde{\theta}\theta_{x}|
(|\widetilde{\theta}_{x}|+|\widetilde{v}||\bar{\theta}_{x}|+|\widetilde{\theta}| |\bar{\theta}_{x}|)\,dx
\nonumber\\
&\leq C\delta(1+t)^{-1}\int_{\mathbb{R}}e^{-\frac{c_{1}x^{2}}{1+t}}(\widetilde{v}^{2}+\widetilde{\theta}^{2})\,dx
+C\|\widetilde{\theta}\|_{L_{x}^{\infty}}\|\widetilde{\theta}_{x}\|^{2}
+C\delta\|\widetilde{\theta}_{x}\|^{2}
\nonumber\\
&\leq C\delta\int_{\mathbb{R}}(\widetilde{v}^{2}+\widetilde{\theta}^{2})\omega^{2}\,dx+C(\delta+\varepsilon_{0})\|\widetilde{\theta}_{x}\|^{2},
\end{align*}
where we have used \eqref{2.9} and the following   imbedding inequality
\begin{align*}
\|g\|_{L^{\infty}(\R)}\leq \sqrt{2}\|g\|^{\frac{1}{2}}_{L^2(\R)}\|g'\|^{\frac{1}{2}}_{L^2(\R)},
\quad \mbox{for}\quad g=g(x)\in H^{1}(\mathbb{R})\subset L^{\infty}(\mathbb{R}).
\end{align*}
By  this, \eqref{1.29} and the H\"{o}lder inequality, one has
\begin{align*}
\int_{\mathbb{R}}\big\{ -\frac{4}{3}\frac{\mu(\theta)}{v}\bar{u}_{1x}\widetilde{u}_{1x}-\bar{u}_{1t}\widetilde{u}_{1} \big\}\,dx
&\leq C\|\bar{u}_{1x}\|\|\widetilde{u}_{1x}\|+
 C \|\widetilde{u}_{1}\|_{L^{\infty}_x} \|\bar{u}_{1t}\|_{L^{1}_x}
\\
&\leq C\delta\|\widetilde{u}_{1x}\|^{2}+C\|\widetilde{u}_{1}\|^{2}\|\widetilde{u}_{1x}\|^{2}+C\delta (1+t)^{-\frac{4}{3}}
\\
&\leq C(\delta+\varepsilon_{0})\|\widetilde{u}_{1x}\|^{2}+C\delta(1+t)^{-\frac{4}{3}}.
\end{align*}
By the expression of $Q_{1}$ in \eqref{2.8}, we can obtain
\begin{align*}
\int_{\mathbb{R}}|\frac{\widetilde{\theta}}{\theta}Q_{1}|\,dx&=\int_{\mathbb{R}}|\frac{\widetilde{\theta}}{\theta}\big\{\frac{4}{3}\frac{\mu(\theta)}{v}u^{2}_{1x}
+\frac{\mu(\theta)}{v}u^{2}_{2x}+\frac{\mu(\theta)}{v}u^{2}_{3x}\big\}|\,dx
\\
&\leq C\|\widetilde{\theta}\|_{L^{\infty}}\int_{\mathbb{R}}(\widetilde{u}_{x}^{2}+\bar{u}^{2}_{x})\,dx
\leq C\varepsilon_{0}\|\widetilde{u}_{x}\|^{2}+C\delta(1+t)^{-\frac{3}{2}}.
\end{align*}
Finally we  estimate $\int_{\mathbb{R}}\mathbb{H}\,dx$.  By using \eqref{2.14} and the integration by parts, one has
\begin{align*}
\int_{\mathbb{R}}\mathbb{H}\,dx &= \int_{\mathbb{R}}(\widetilde{u}_{x}-\frac{\widetilde{\theta}}{\theta}u_{x})
\cdot\int_{\mathbb{R}^{3}}\xi \xi_{1}L^{-1}_{M}\Theta_{1} \,d\xi dx\\
&\qquad+ \int_{\mathbb{R}}(\frac{\widetilde{\theta}}{\theta})_{x}\int_{\mathbb{R}^{3}}(\frac{1}{2}\xi_{1}|\xi|^{2}-\xi_{1}\xi\cdot u)L^{-1}_{M}\Theta_{1}\, d\xi dx :=J_1+J_2.
\end{align*}
We first estimate the term $J_2$. By using \eqref{5.1}, \eqref{5.2} and the self-adjoint property of $L^{-1}_{M}$, we have
\begin{align}
\label{5.15}
\int_{\mathbb{R}^{3}} (\frac{1}{2}\xi_{1}|\xi|^{2}&-\xi_{1}\xi\cdot u)L^{-1}_{M}\Theta_{1} \,d\xi=
\int_{\mathbb{R}^{3}} L^{-1}_{M}\{P_{1}(\frac{1}{2}\xi_{1}|\xi|^{2}-\xi_{1}\xi\cdot u)M\}\frac{\Theta_{1}}{M}\, d\xi
\nonumber\\
=&\int_{\mathbb{R}^{3}} L^{-1}_{M}\{(R\theta)^{\frac{3}{2}}\hat{A}_{1}(\frac{\xi-u}{\sqrt{R\theta}})M\}\frac{\Theta_{1}}{M}\, d\xi
=(R\theta)^{\frac{3}{2}}\int_{\mathbb{R}^{3}}A_{1}(\frac{\xi-u}{\sqrt{R\theta}})\frac{\Theta_{1}}{M}\, d\xi,
\end{align}
and
\begin{align}
\label{5.16}
\int_{\mathbb{R}^{3}}\xi_{1}\xi_{i}L^{-1}_{M}\Theta_{1} \,d\xi=&
\int_{\mathbb{R}^{3}} L^{-1}_{M}\{P_{1}( \xi_{1}\xi_{i}M)\}\frac{\Theta_{1}}{M} \,d\xi
\nonumber\\
=&\int_{\mathbb{R}^{3}} L^{-1}_{M}\{R\theta\hat{B}_{1i}(\frac{\xi-u}{\sqrt{R\theta}})M\}\frac{\Theta_{1}}{M}\, d\xi
=R\theta\int_{\mathbb{R}^{3}}B_{1i}(\frac{\xi-u}{\sqrt{R\theta}})\frac{\Theta_{1}}{M} \,d\xi.
\end{align}
By using \eqref{5.15} and the expression of $J_{2}$, one has
\begin{align}
\label{2.18}
J_{2}=\int_{\mathbb{R}}\Big\{(\frac{\widetilde{\theta}}{\theta})_{x}
(R\theta)^{\frac{3}{2}}\int_{\mathbb{R}^{3}}A_{1}(\frac{\xi-u}{\sqrt{R\theta}})\frac{\Theta_{1}}{M}\, d\xi\Big\}\,dx.
\end{align}
   It follows from \eqref{1.22} that
\begin{equation*}
\Theta_{1}:=G_{t}-\frac{u_{1}}{v}G_{x}+\frac{1}{v}P_{1}(\xi_{1}G_{x})-Q(G,G).
\end{equation*}
For any multi-index $\beta$ and $b\geq 0$, by \eqref{5.4}, \eqref{1.38} and \eqref{2.12}, we have
\begin{equation}
\label{5.18}
\int_{\mathbb{R}^{3}}\frac{|\langle \xi\rangle^{b}\sqrt{\mu}\partial_{\beta}A_{1}(\frac{\xi-u}{\sqrt{R\theta}})|^{2}}{M^{2}}\,d\xi
+\int_{\mathbb{R}^{3}}\frac{|\langle \xi\rangle^{b}\sqrt{\mu}\partial_{\beta}B_{1i}(\frac{\xi-u}{\sqrt{R\theta}})|^{2}}{M^{2}}\,d\xi\leq C.
\end{equation}
By this and the similar expansion as \eqref{5.20}, we have from \eqref{5.38},
\eqref{1.29} and \eqref{2.9}  that
\begin{align}
\label{2.19}
 \int_{\mathbb{R}}\Big\{(\frac{\widetilde{\theta}}{\theta})_{x}&
(R\theta)^{\frac{3}{2}}\int_{\mathbb{R}^{3}}A_{1}(\frac{\xi-u}{\sqrt{R\theta}})\frac{\overline{G}_{t}}{M} \,d\xi\Big\}\,dx
\leq C(\|\widetilde{\theta}_{x}\|+\|\widetilde{\theta}\theta_{x}\|)
\times\|\frac{\overline{G}_{t}}{\sqrt{\mu}}\|
\nonumber\\
&\leq C(\|\widetilde{\theta}_{x}\|+\|\widetilde{\theta}\|_{L_{x}^{\infty}}\|\theta_{x}\|)
\times (\|[\bar{u}_{1xt},\bar{\theta}_{xt}]\|+\|[\bar{u}_{1x},\bar{\theta}_{x}]\cdot[v_{t},u_{t},\theta_{t}]\|)
\nonumber\\
&\leq C(\delta+\varepsilon_{0})\|[\widetilde{\theta}_{x},\widetilde{v}_{t},\widetilde{u}_{t},\widetilde{\theta}_{t}]\|^{2}
+C\delta(1+t)^{-\frac 43}.
\end{align}
For any $\epsilon>0$, by using \eqref{5.18}, \eqref{2.9}  and \eqref{1.35}, one has
\begin{align}
\label{2.20}
 \int_{\mathbb{R}}\Big\{(\frac{\widetilde{\theta}}{\theta})_{x}
(R\theta)^{\frac{3}{2}}\int_{\mathbb{R}^{3}}A_{1}(\frac{\xi-u}{\sqrt{R\theta}})\frac{\sqrt{\mu}\mathbf{g}_{t}}{M}\, d\xi\Big\}\,dx
&\leq C\{\|\widetilde{\theta}_{x}\|+\|\widetilde{\theta}\widetilde{\theta}_{x}\|+\|\widetilde{\theta}\bar{\theta}_{x}\|\}
\times\|\langle \xi\rangle^{-\frac{1}{2}}\mathbf{g}_{t}\|
\nonumber\\
&\leq C\epsilon\|\widetilde{\theta}_{x}\|^{2}+C_\epsilon\|\mathbf{g}_{t}\|^{2}_{\sigma}
+ C\delta\int_{\mathbb{R}}\widetilde{\theta}^{2}\omega^{2}\,dx,
\end{align}
where in the last inequality we have used the fact that
$$
\|\widetilde{\theta}\bar{\theta}_{x}\|^{2}\leq C\delta(1+t)^{-1}\int_{\mathbb{R}}e^{-\frac{c_{1}x^{2}}{1+t}}\widetilde{\theta}^{2}\,dx
\leq C\delta\int_{\mathbb{R}}\widetilde{\theta}^{2}\omega^{2}\,dx.
$$
Recalling that $G=\overline{G}+\sqrt{\mu}\mathbf{g}$, we deduce from \eqref{2.19} and \eqref{2.20} that
\begin{align}
\label{2.21}
&\int_{\mathbb{R}}\Big\{(\frac{\widetilde{\theta}}{\theta})_{x}
(R\theta)^{\frac{3}{2}}\int_{\mathbb{R}^{3}}A_{1}(\frac{\xi-u}{\sqrt{R\theta}})\frac{G_{t}}{M}\, d\xi\Big\}\,dx
\nonumber\\
&\leq C\epsilon\|\widetilde{\theta}_{x}\|^{2}+C_\epsilon\|\mathbf{g}_{t}\|^{2}_{\sigma}
+C(\delta+\varepsilon_{0})\|[\widetilde{\theta}_{x},\widetilde{v}_{t},\widetilde{u}_{t},\widetilde{\theta}_{t}]\|^{2}
+C\delta(1+t)^{-\frac 43}+C\delta\int_{\mathbb{R}}\widetilde{\theta}^{2}\omega^{2}\, dx.
\end{align}
By using \eqref{5.18} and the similar arguments as \eqref{2.21}, one has
\begin{align}
\label{2.21d}
&\int_{\mathbb{R}}\Big\{(\frac{\widetilde{\theta}}{\theta})_{x}
(R\theta)^{\frac{3}{2}}\int_{\mathbb{R}^{3}}A_{1}(\frac{\xi-u}{\sqrt{R\theta}})
\big\{\frac{1}{v}P_{1}(\xi_{1}G_{x})-\frac{u_{1}}{v}G_{x}\big\}
\frac{1}{M} \,d\xi\Big\}\,dx
\nonumber\\
&\leq C\epsilon\|\widetilde{\theta}_{x}\|^{2}
+C_{\epsilon}\|\mathbf{g}_{x}\|^{2}_{\sigma}
+C(\delta+\varepsilon_{0})\|[\widetilde{v}_{x},\widetilde{u}_{x},\widetilde{\theta}_{x}]\|^{2}
+C\delta(1+t)^{-\frac 43}+C\delta\int_{\mathbb{R}}\widetilde{\theta}^{2}\omega^{2}\,dx.
\end{align}
Recalling that $G=\overline{G}+\sqrt{\mu}\mathbf{g}$, by \eqref{2.5}, \eqref{5.7}, \eqref{5.18}, \eqref{1.35} and \eqref{5.19a}, we have
\begin{align}
\label{2.22}
&\int_{\mathbb{R}}\Big\{(\frac{\widetilde{\theta}}{\theta})_{x}
(R\theta)^{\frac{3}{2}}\int_{\mathbb{R}^{3}}A_{1}(\frac{\xi-u}{\sqrt{R\theta}})\frac{Q(G,G)}{M}\, d\xi\Big\}\,dx\notag\\
 &=\Big( \Gamma(\frac{G}{\sqrt{\mu}},\frac{G}{\sqrt{\mu}}), (\frac{\widetilde{\theta}}{\theta})_{x}
(R\theta)^{\frac{3}{2}} A_{1}(\frac{\xi-u}{\sqrt{R\theta}})\frac{\sqrt{\mu}}{M}\Big)
\nonumber\\
&\leq C\int_{\mathbb{R}}|(\frac{\widetilde{\theta}}{\theta})_{x}|
|\mu^{\varepsilon}\frac{G}{\sqrt{\mu}}|_{2}|\frac{G}{\sqrt{\mu}}|_{\sigma}\,dx
\nonumber\\
&\leq C\int_{\mathbb{R}}(|\widetilde{\theta}_{x}|+|\widetilde{\theta}\theta_{x}|)
(|\overline{u}_x|+|\overline{\theta}_x|+|\mathbf{g}|_{\sigma})^{2}\,dx
\nonumber\\
&\leq C(\delta+\varepsilon_{0})\|\widetilde{\theta}_{x}\|^{2}
+C\varepsilon_{0}\|\mathbf{g}\|^{2}_{\sigma}
+C\delta(1+t)^{-\frac 43}.
\end{align}
By using \eqref{2.18}, \eqref{2.21}, \eqref{2.21d} and \eqref{2.22}, we arrive at
\begin{align*}
J_{2}\leq C\epsilon\|\widetilde{\theta}_{x}\|^{2}+C_{\epsilon}\sum_{|\alpha|=1}\|\partial^{\alpha}\mathbf{g}\|^{2}_{\sigma}
&+C(\delta+\varepsilon_{0})\sum_{|\alpha|=1}\|\partial^{\alpha}[\widetilde{v},\widetilde{u},\widetilde{\theta}]\|^{2}
\\
& +C\varepsilon_{0}\|\mathbf{g}\|^{2}_{\sigma}
+C\delta(1+t)^{-\frac{4}{3}}+C\delta\int_{\mathbb{R}}\widetilde{\theta}^{2}\omega^{2}\,dx.
\end{align*}
The term  $J_{1}$ can be treated  similarly as $J_{2}$, we thereby have
\begin{align}\label{2.23a}
\int_{\mathbb{R}}\mathbb{H}\,dx &\leq C\epsilon(\|\widetilde{\theta}_{x}\|^{2}+\|\widetilde{u}_{x}\|^{2})+C_\epsilon\sum_{|\alpha|=1}\|\partial^{\alpha}\mathbf{g}\|^{2}_{\sigma}
+C(\delta+\varepsilon_{0})\sum_{|\alpha|=1}\|\partial^{\alpha}[\widetilde{v},\widetilde{u},\widetilde{\theta}]\|^{2}
\nonumber\\
&\qquad+C\varepsilon_{0}\|\mathbf{g}\|^{2}_{\sigma}
+C\delta(1+t)^{-\frac{4}{3}}+C\delta\int_{\mathbb{R}}\widetilde{\theta}^{2}\omega^{2}\,dx.
\end{align}
 Hence, integrating \eqref{2.13} about $x$ over $\mathbb{R}$ and taking a small $\epsilon>0$, we have from the above estimates that
\begin{align}
\label{2.23}
\frac{d}{dt}\int_{\mathbb{R}}\Big(\frac{2}{3}\bar{\theta}\Phi(\frac{v}{\bar{v}})&+\frac{1}{2}\widetilde{u}^{2}
+\bar{\theta}\Phi(\frac{\theta}{\bar{\theta}})\Big)dx+c\|[\widetilde{u}_{x},\widetilde{\theta}_{x}]\|^{2}\notag\\
&\leq C(\delta+\varepsilon_{0})\sum_{|\alpha|=1}\|\partial^{\alpha}[\widetilde{v},\widetilde{u},\widetilde{\theta}]\|^{2}
+C\varepsilon_{0}\|\mathbf{g}\|^{2}_{\sigma}
+C\sum_{|\alpha|=1}\|\partial^{\alpha}\mathbf{g}\|^{2}_{\sigma}\notag\\
&\qquad+C\delta(1+t)^{-\frac{4}{3}}
+C\delta\int_{\mathbb{R}}(\widetilde{v}^{2}+\widetilde{\theta}^{2})\omega^{2}\,dx.
\end{align}

Notice that there are no dissipation terms for $\widetilde{v}_{x}$ and $[\widetilde{v}_{t},\widetilde{u}_{t},\widetilde{\theta}_{t}]$ in \eqref{2.23}.
For these dissipation terms,  we   have from \eqref{1.19} and \eqref{1.28} that
\begin{equation}
\label{2.24}
\begin{cases}
\widetilde{v_{t}}-\widetilde{u}_{1x}=0,
\\
\widetilde{u}_{1t}+(\frac{R\widetilde{\theta}-p_{+}\widetilde{v}}{v})_{x}=-\bar{u}_{1t}-\int_{\mathbb{R}^{3}} \xi^{2}_{1}G_{x}\,d\xi,
\\
\widetilde{u}_{it}=-\int_{\mathbb{R}^{3}} \xi_{1}\xi_{i}G_{x}\,d\xi, ~~i=2,3,
\\
\widetilde{\theta}_{t}+\bar{\theta}_{t}+pu_{1x}
=-\int_{\mathbb{R}^{3}} \frac{1}{2}\xi_{1}|\xi|^{2}G_{x}d\xi+u\cdot\int_{\mathbb{R}^{3}}\xi \xi_{1}G_{x}\,d\xi.
\end{cases}
\end{equation}
We take the inner product of \eqref{2.24}$_{2}$  with $-\widetilde{v}_{x}$ with respect to $x$ over $\mathbb{R}$ to get
\begin{equation}
\label{2.25}
\big(\widetilde{u}_{1t},-\widetilde{v}_{x}\big)+\big(\frac{R}{v}\widetilde{\theta}_{x}-\frac{R\widetilde{\theta}}{v^2}v_{x}+\frac{p_+\widetilde{v}}{v^2}v_{x},-\widetilde{v}_{x}\big)
+\big(\frac{p_+}{v}\widetilde{v}_{x},\widetilde{v}_{x}\big)
=
 \big(\bar{u}_{1t},\widetilde{v}_{x}\big)+\big(\int_{\mathbb{R}^{3}} \xi^{2}_{1}G_{x}\,d\xi,\widetilde{v}_{x}\big).
\end{equation}
By using \eqref{2.24}$_{1}$ and the integration by parts, one has
$$
-(\widetilde{u}_{1t},\widetilde{v}_{x})=-(\widetilde{u}_{1},\widetilde{v}_{x})_{t}+(\widetilde{u}_{1},\widetilde{v}_{tx})
=-(\widetilde{u}_{1},\widetilde{v}_{x})_{t}-\|\widetilde{u}_{1x}\|^{2}.
$$
By using \eqref{1.27}, \eqref{1.28} \eqref{1.29} and \eqref{2.17}, we have
\begin{align*}
&|\big(\frac{R}{v}\widetilde{\theta}_{x}-\frac{R\widetilde{\theta}}{v^2}v_{x}+\frac{p_+\widetilde{v}}{v^2}v_{x},-\widetilde{v}_{x}\big)|
+|(\bar{u}_{1t},\widetilde{v}_{x})|
\\
&\leq (\epsilon+\delta+\varepsilon_0)\|\widetilde{v}_{x}\|^{2}+C_{\epsilon}\|\widetilde{\theta}_{x}\|^{2}
+C_{\epsilon}\delta(1+t)^{-\frac{4}{3}}+C\delta\int_{\mathbb{R}}(\widetilde{v}^{2}+\widetilde{\theta}^{2})\omega^{2}dx.
\end{align*}
Recalling that $G=\overline{G}+\sqrt{\mu}\mathbf{g}$, we have from \eqref{5.38}  and \eqref{1.29} that
\begin{align*}
|(\int_{\mathbb{R}^{3}} \xi^{2}_{1}G_{x}\,d\xi,\widetilde{v}_{x})|
&\leq C\int_{\mathbb{R}}|\widetilde{v}_{x}|\big\{|[\overline{u}_{xx},\overline{\theta}_{xx}]|
+|[\overline{u}_x,\overline{\theta}_x]\cdot[v_{x},u_{x},\theta_{x}]|+|\mathbf{g}_{x}|_{\sigma} \big\}\,dx
\\
&\leq (\epsilon+\delta)\|\widetilde{v}_{x}\|^{2}+C_{\epsilon}\|\mathbf{g}_{x}\|_{\sigma}^{2}
+C_{\epsilon}\delta\|[\widetilde{v}_{x},\widetilde{u}_{x},\widetilde{\theta}_{x}]\|^{2}
+C_{\epsilon}\delta(1+t)^{-\frac{4}{3}}.
\end{align*}
By taking $\epsilon$, $\delta$ and $\varepsilon_0$ small enough, by using \eqref{2.25} and the above estimates, we arrive at
\begin{equation}
\label{2.26}
-(\widetilde{u}_{1},\widetilde{v}_{x})_{t}+c\|\widetilde{v}_{x}\|^{2}
\leq C(\|\widetilde{u}_{x}\|^{2}+\|\widetilde{\theta}_{x}\|^{2}+\|\mathbf{g}_{x}\|_{\sigma}^{2})
+C\delta(1+t)^{-\frac{4}{3}}+C\delta\int_{\mathbb{R}}(\widetilde{v}^{2}+\widetilde{\theta}^{2})\omega^{2}dx.
\end{equation}
Taking the inner product of \eqref{2.24}$_{1}$, \eqref{2.24}$_{2}$, \eqref{2.24}$_{3}$, \eqref{2.24}$_{4}$ with $\widetilde{v}_{t}$,
$\widetilde{u}_{1t}$, $\widetilde{u}_{it}$ $(i=2,3)$, $\widetilde{\theta}_{t}$, respectively, we   arrive at
\begin{equation}
\label{2.27}
\|[\widetilde{v}_{t},\widetilde{u}_{t},\widetilde{\theta}_{t}]\|^{2}
\leq C\Big\{\|[\widetilde{v}_{x},\widetilde{u}_{x},\widetilde{\theta}_{x}]\|^{2}
+\|\mathbf{g}_{x}\|_{\sigma}^{2}+\delta(1+t)^{-\frac{4}{3}}+\delta\int_{\mathbb{R}}(\widetilde{v}^{2}+\widetilde{\theta}^{2})\omega^{2}\,dx\Big\}.
\end{equation}
For some small $\kappa_{1}>0$, we have from a suitable linear combination of \eqref{2.26} and \eqref{2.27} that
\begin{align*}
&-\kappa_{1}(\widetilde{u}_{1},\widetilde{v}_{x})_{t}+c\kappa_{1}(\|\widetilde{v}_{x}\|^{2}+\|[\widetilde{v}_{t},\widetilde{u}_{t},\widetilde{\theta}_{t}]\|^{2})\\
&\leq C\kappa_{1}\Big\{\|[\widetilde{u}_{x},\widetilde{\theta}_{x}]\|^{2}
+\|\mathbf{g}_{x}\|_{\sigma}^{2}+\delta(1+t)^{-\frac{4}{3}}+\delta\int_{\mathbb{R}}(\widetilde{v}^{2}+\widetilde{\theta}^{2})\omega^{2}\,dx\Big\}.
\end{align*}
If we choosing $\varepsilon_{0}$  and  $\delta$ small enough, we get from
this and \eqref{2.23} that
\begin{align}
\label{2.28}
&\frac{d}{dt}\int_{\mathbb{R}}\Big(\big(\frac{2}{3}\bar{\theta}\Phi(\frac{v}{\bar{v}})+\frac{1}{2}\widetilde{u}^{2}
+\bar{\theta}\Phi(\frac{\theta}{\bar{\theta}})\big)-\kappa_{1}\widetilde{u}_{1}\widetilde{v}_{x}\Big)\,dx
+c\sum_{|\alpha|=1}\|\partial^{\alpha}[\widetilde{v},\widetilde{u},\widetilde{\theta}]\|^{2}
\nonumber\\
&\leq C\sum_{|\alpha|=1}\|\partial^{\alpha}\mathbf{g}\|^{2}_{\sigma}+C\varepsilon_{0}\|\mathbf{g}\|^{2}_{\sigma}
+C\delta(1+t)^{-\frac{4}{3}}+C\delta\int_{\mathbb{R}}(\widetilde{v}^{2}+\widetilde{\theta}^{2})\omega^{2}\,dx.
\end{align}
This completes the lower order energy estimates for the macroscopic component $[ \widetilde{v}, \widetilde{u}, \widetilde{\theta}]$.

Then we turn to prove the lower order energy estimates for the microscopic component $\mathbf{g}$. We take the inner product of \eqref{2.4} with $\mathbf{g}$  over $\mathbb{R}\times\mathbb{R}^{3}$ to get
\begin{align}
\label{2.29}
&\frac{1}{2}\frac{d}{dt}(v\mathbf{g},\mathbf{g})+(\xi_{1}\mathbf{g}_{x},\mathbf{g})-\frac{1}{2}(v_{t}\mathbf{g},\mathbf{g})+\frac{1}{2}(u_{1x}\mathbf{g},\mathbf{g})-(v\mathcal{L}\mathbf{g},\mathbf{g})
\nonumber\\
&=(v\Gamma(\mathbf{g},\frac{M-\mu}{\sqrt{\mu}})+
v\Gamma(\frac{M-\mu}{\sqrt{\mu}},\mathbf{g}),\mathbf{g})+(v\Gamma(\frac{G}{\sqrt{\mu}},\frac{G}{\sqrt{\mu}}),\mathbf{g})
\nonumber\\
&\qquad+(\frac{P_{0}(\xi_{1}\sqrt{\mu}\mathbf{g}_{x})}{\sqrt{\mu}},\mathbf{g})-(\frac{1}{\sqrt{\mu}}P_{1}\xi_{1}M\big\{\frac{|\xi-u|^{2}
\widetilde{\theta}_{x}}{2R\theta^{2}}+\frac{(\xi-u)\cdot\widetilde{u}_{x}}{R\theta}\big\},\mathbf{g})
\nonumber\\
&\qquad-(\frac{P_{1}(\xi_{1}\overline{G}_{x})}{\sqrt{\mu}},\mathbf{g})+(u_{1}\frac{\overline{G}_{x}}{\sqrt{\mu}},\mathbf{g})
-(v\frac{\overline{G}_{t}}{\sqrt{\mu}},\mathbf{g}).
\end{align}
We will estimate each term in \eqref{2.29}. First note that the second term on the left hand side of \eqref{2.29} vanishes by integration by parts. The third and fourth terms can be cancelled by the fact that $v_{t}=u_{1x}$, namely
\begin{align*}
\frac{1}{2}(v_{t}\mathbf{g},\mathbf{g})-\frac{1}{2}(u_{1x}\mathbf{g},\mathbf{g})=0.
\end{align*}
From \eqref{2.6} and \eqref{2.12}, we get
$$
-(v\mathcal{L}\mathbf{g},\mathbf{g}) \geq  c_{2} \|v^{\frac{1}{2}}\mathbf{g}\|^{2}_{\sigma} \geq  c_{3} \|\mathbf{g}\|^{2}_{\sigma}.
$$
In view of \eqref{5.30} and \eqref{5.34},  it is  seen that
\begin{align*}
&|(v\Gamma(\mathbf{g},\frac{M-\mu}{\sqrt{\mu}})+
v\Gamma(\frac{M-\mu}{\sqrt{\mu}},\mathbf{g}) +(v\Gamma(\frac{G}{\sqrt{\mu}},\frac{G}{\sqrt{\mu}}),\mathbf{g})|
\\
&\leq C(\eta_{0}+\delta+\varepsilon_{0})\big(\|\mathbf{g}\|_{\sigma}^2+\mathcal{D}_{2,l,q}(t)\big)+C\delta(1+t)^{-\frac{4}{3}}.
\end{align*}
By \eqref{1.20a} and \eqref{1.35}, we obtain
\begin{align*}
&|(\frac{P_{0}(\xi_{1}\sqrt{\mu}\mathbf{g}_{x})}{\sqrt{\mu}},\mathbf{g})|=|\sum_{i=0}^{4}(\frac{1}{\sqrt{\mu}}\langle \xi_{1}\sqrt{\mu}\mathbf{g}_{x},\frac{\chi_{i}}{M}\rangle\chi_{i},\mathbf{g})|
\\
&\leq \epsilon\|\langle \xi\rangle^{-\frac{1}{2}}\mathbf{g}\|^{2}+C_{\epsilon}\|\langle \xi\rangle^{-\frac{1}{2}}\mathbf{g}_{x}\|^{2}
\leq C\epsilon\|\mathbf{g}\|_{\sigma}^{2}+C_{\epsilon}\|\mathbf{g}_{x}\|_{\sigma}^{2}.
\end{align*}
Here we used the fact that $|\langle \xi\rangle^b\mu^{-\frac{1}{2}}M|_2\leq C$  for any $b\ge0$ by \eqref{2.12}.

By \eqref{1.20a}, \eqref{5.1} and   a direct calculation, one has
\begin{align}
\label{2.35a}
P_{1}\xi_{1}M\big\{\frac{|\xi-u|^{2}
\widetilde{\theta}_{x}}{2R\theta^{2}}+\frac{(\xi-u)\cdot\widetilde{u}_{x}}{R\theta}\big\}
=\frac{\sqrt{R}\widetilde{\theta}_{x}}{\sqrt{\theta}}\hat{A}_{1}(\frac{\xi-u}{\sqrt{R\theta}})M
+\sum_{j=1}^{3}\widetilde{u}_{jx}\hat{B}_{1j}(\frac{\xi-u}{\sqrt{R\theta}})M.
\end{align}
It follows that
\begin{align*}
|(\frac{1}{\sqrt{\mu}}P_{1}\xi_{1}M\big\{\frac{|\xi-u|^{2}
\widetilde{\theta}_{x}}{2R\theta^{2}}+\frac{(\xi-u)\cdot\widetilde{u}_{x}}{R\theta}\big\},\mathbf{g})|
\leq C\epsilon\|\mathbf{g}\|_{\sigma}^{2}+C_{\epsilon}\|[\widetilde{u}_{x},\widetilde{\theta}_{x}]\|^{2}.
\end{align*}
By \eqref{1.20a}, \eqref{5.38},   \eqref{1.35} and \eqref{1.29}, we have
\begin{align*}
&|(\frac{P_{1}(\xi_{1}\overline{G}_{x})}{\sqrt{\mu}},\mathbf{g})|=|(\frac{\xi_{1}\overline{G}_{x}}{\sqrt{\mu}},\mathbf{g})
-(\frac{P_{0}(\xi_{1}\overline{G}_{x})}{\sqrt{\mu}},\mathbf{g})|
\\
&\leq C\int_{\mathbb{R}}\big\{|[\overline{u}_{xx},\overline{\theta}_{xx}]|
+|[\overline{u}_x,\overline{\theta}_x]\cdot[v_{x},u_{x},\theta_{x}]|\big\}|\langle \xi\rangle^{-\frac{1}{2}}\mathbf{g}|_{2}\,dx
\\
&\leq C\epsilon\|\mathbf{g}\|_{\sigma}^{2}+C_{\epsilon}\delta\|[\widetilde{v}_{x},\widetilde{u}_{x},\widetilde{\theta}_{x}]\|^{2}
+C_{\epsilon}\delta(1+t)^{-\frac{4}{3}}
\\
&\leq C\epsilon\|\mathbf{g}\|_{\sigma}^{2}+C_{\epsilon}\delta\mathcal{D}_{2,l,q}(t)
+C_{\epsilon}\delta(1+t)^{-\frac{4}{3}}.
\end{align*}
Similarly, it holds that
\begin{align*}
|(u_{1}\frac{\overline{G}_{x}}{\sqrt{\mu}},\mathbf{g})|
+|(v\frac{\overline{G}_{t}}{\sqrt{\mu}},\mathbf{g})|
\leq C\epsilon\|\mathbf{g}\|_{\sigma}^{2}+C_{\epsilon}\delta\mathcal{D}_{2,l,q}(t)
+C_{\epsilon}\delta(1+t)^{-\frac{4}{3}}.
\end{align*}
Substituting the above   estimates into \eqref{2.29} and taking a small $\epsilon>0$, one has
\begin{align}
\label{2.30}
\frac{d}{dt}\|v^{\frac{1}{2}}\mathbf{g}\|^{2}+c\|\mathbf{g}\|^{2}_{\sigma}
\leq C\|[\widetilde{u}_{x},\widetilde{\theta}_{x}]\|^{2}+C\|\mathbf{g}_{x}\|_{\sigma}^{2}
+C(\eta_{0}+\delta+\varepsilon_{0})\mathcal{D}_{2,l,q}(t)
+C\delta(1+t)^{-\frac{4}{3}}.
\end{align}
For some large $\widetilde{C}_1>0$,  we have from $\eqref{2.28}$ and $\eqref{2.30}$ that
\begin{multline}
\label{2.31}
\frac{d}{dt}\Big\{\int_{\mathbb{R}}\Big(\widetilde{C}_1\big(\frac{2}{3}\bar{\theta}\Phi(\frac{v}{\bar{v}})+\frac{1}{2}\widetilde{u}^{2}
+\bar{\theta}\Phi(\frac{\theta}{\bar{\theta}})\big)-\widetilde{C}_1\kappa_{1}\widetilde{u}_{1}\widetilde{v}_{x}\Big)\,dx
+\|v^{\frac{1}{2}}\mathbf{g}\|^{2}\Big\}\\
+c\sum_{|\alpha|=1}\|\partial^{\alpha}[\widetilde{v},\widetilde{u},\widetilde{\theta}]\|^{2}
+c\|\mathbf{g}\|^{2}_{\sigma}
\leq C\sum_{|\alpha|=1}\|\partial^{\alpha}\mathbf{g}\|^{2}_{\sigma}
+C\delta(1+t)^{-\frac{4}{3}}\\
+C(\eta_{0}+\delta+\varepsilon_{0})\mathcal{D}_{2,l,q}(t)+C\delta\int_{\mathbb{R}}(\widetilde{v}^{2}+\widetilde{\theta}^{2})\omega^{2}\,dx.
\end{multline}
This completes the proof of the lower order energy estimates.\qed

\subsection{High order energy estimates}\label{sub2.3}
In what follows we will deduce the derivative estimates for the solutions.
Applying $\partial_{x}$ to \eqref{2.7} yields
\begin{equation}
\label{2.33}
\begin{cases}
\widetilde{v}_{tx}-\widetilde{u}_{1xx}=0,
\\
\widetilde{u}_{1tx}+\frac{R\widetilde{\theta}_{xx}-p_{+}\widetilde{v}_{xx}}{v}=\frac{4}{3}\big(\frac{\mu(\theta)}{v}\widetilde{u}_{1xx}\big)_{x}+Q_{2}
-\int_{\mathbb{R}^{3}} \xi^{2}_{1}(L^{-1}_{M}\Theta_{1})_{xx} \,d\xi,
\\
\widetilde{u}_{itx}=\big(\frac{\mu(\theta)}{v}\widetilde{u}_{ixx}\big)_{x}+\big[(\frac{\mu(\theta)}{v})_{x}\widetilde{u}_{ix}\big]_{x}
-\int_{\mathbb{R}^{3}} \xi_{1}\xi_{i}(L^{-1}_{M}\Theta_{1})_{xx} \,d\xi, ~~i=2,3,
\\
\widetilde{\theta}_{tx}+p_{+}\widetilde{u}_{1xx}
=\big(\frac{\kappa(\theta)}{v}\widetilde{\theta}_{xx}\big)_{x}
+Q_{3}
\\
\hspace{1cm}+(u\cdot\int_{\mathbb{R}^{3}} \xi\xi_{1}(L^{-1}_{M}\Theta_{1})_{x}\,d\xi)_{x}
-\frac{1}{2}\int_{\mathbb{R}^{3}}\xi_{1}|\xi|^{2}(L^{-1}_{M}\Theta_{1})_{xx} \,d\xi,
\end{cases}
\end{equation}
where we have used the fact that
\begin{equation}
\label{2.34}
(p-p_{+})_{xx}=(\frac{R\widetilde{\theta}-p_{+}\widetilde{v}}{v})_{xx}
=\frac{R\widetilde{\theta}_{xx}-p_{+}\widetilde{v}_{xx}}{v}-\frac{2v_{x}}{v}(p-\bar{p})_{x}-\frac{v_{xx}}{v}(p-\bar{p}),
\end{equation}
and we have denoted that
\begin{equation}
\label{2.35}
Q_{2}=\frac{2v_{x}}{v}(p-\bar{p})_{x}+\frac{v_{xx}}{v}(p-\bar{p})
+\frac{4}{3}\big(\frac{\mu(\theta)}{v}\bar{u}_{1xx}\big)_{x}
+\frac{4}{3}\big[(\frac{\mu(\theta)}{v})_{x}u_{1x}\big]_{x}-\bar{u}_{1tx},
\end{equation}
and
\begin{equation}
\label{2.36}
Q_{3}=-p_{x}u_{1x}-(p-p_{+})u_{1xx}+
\big[(\frac{\kappa(\theta)}{v}-\frac{\kappa(\bar{\theta})}{\bar{v}})\bar{\theta}_{x}\big]_{xx}
+\big[(\frac{\kappa(\theta)}{v})_{x}\widetilde{\theta}_{x}\big]_{x}+Q_{1x}.
\end{equation}

Multiplying \eqref{2.33}$_{1}$ by $p_{+}\widetilde{v}_{x}$, \eqref{2.33}$_{2}$ by $v\widetilde{u}_{1x}$,  \eqref{2.33}$_{3}$ by $\widetilde{u}_{ix}$ $(i=2,3)$ and \eqref{2.33}$_{4}$ by $\frac{R}{p_{+}}\widetilde{\theta}_{x}$, then adding the resulting equations together, we   arrive at
\begin{align}
\label{2.37}
&\big(\frac{p_{+}}{2}\widetilde{v}^{2}_{x}+\frac{v}{2}\widetilde{u}^{2}_{1x}+\frac{1}{2}\sum^{3}_{i=2}\widetilde{u}^{2}_{ix}
+\frac{R}{2p_{+}}\widetilde{\theta}^{2}_{x}\big)_{t}
+\frac{4}{3}\mu(\theta)\widetilde{u}^{2}_{1xx}
+\sum_{i=2}^{3}\frac{\mu(\theta)}{v}\widetilde{u}_{ixx}^{2}+\frac{R}{p_{+}}\frac{\kappa(\theta)}{v}\widetilde{\theta}^{2}_{xx}+(\cdot\cdot\cdot)_{x}
\nonumber\\
&=\frac{v_{t}}{2}\widetilde{u}^{2}_{1x}-\frac{4}{3}\frac{\mu(\theta)}{v}\widetilde{u}_{1xx}v_{x}\widetilde{u}_{1x}
+\sum^{3}_{i=2}\big[(\frac{\mu(\theta)}{v})_{x}\widetilde{u}_{ix}\big]_{x}\widetilde{u}_{ix}+v\widetilde{u}_{1x}Q_{2}+\frac{R}{p_{+}}\widetilde{\theta}_{x}Q_{3}
+\mathbb{H}_{1},
\end{align}
where
\begin{align*}
\mathbb{H}_{1}&=-v\widetilde{u}_{1x}\int_{\mathbb{R}^{3}} \xi^{2}_{1}(L^{-1}_{M}\Theta_{1})_{xx} \,d\xi
-\sum^{3}_{i=2}\widetilde{u}_{ix}\int_{\mathbb{R}^{3}} \xi_{1}\xi_{i}(L^{-1}_{M}\Theta_{1})_{xx} \,d\xi
\nonumber\\
&\qquad+\frac{R}{p_{+}}\widetilde{\theta}_{x}(u\cdot\int_{\mathbb{R}^{3}} \xi\xi_{1}(L^{-1}_{M}\Theta_{1})_{x}\,d\xi)_{x}
-\frac{1}{2}\frac{R}{p_{+}}\widetilde{\theta}_{x}\int_{\mathbb{R}^{3}}\xi_{1}|\xi|^{2}(L^{-1}_{M}\Theta_{1})_{xx} \,d\xi
\nonumber\\
&=(\cdot\cdot\cdot)_{x}+(v\widetilde{u}_{1x})_{x}\big(\int_{\mathbb{R}^{3}} \xi^{2}_{1}L^{-1}_{M}\Theta_{1}\, d\xi\big)_{x}
+\sum^{3}_{i=2}\widetilde{u}_{ixx}\big(\int_{\mathbb{R}^{3}} \xi_{1}\xi_{i}(L^{-1}_{M}\Theta_{1}\, d\xi\big)_{x}
\nonumber\\
&\qquad+\frac{R}{p_{+}}\widetilde{\theta}_{xx}u_{x}\cdot\int_{\mathbb{R}^{3}} \xi\xi_{1}L^{-1}_{M}\Theta_{1}\,d\xi
+\frac{R}{p_{+}}\widetilde{\theta}_{xx}\big(\int_{\mathbb{R}^{3}}(\frac{1}{2}\xi_{1}|\xi|^{2}-u\cdot\xi\xi_{1})L^{-1}_{M}\Theta_{1}\,d\xi\big)_{x}.
\end{align*}
We shall estimate each term in \eqref{2.37}. By \eqref{1.29} and \eqref{2.9}, the imbedding inequality and the integration by parts, we obtain
\begin{align*}
&\int_{\mathbb{R}}\big(\frac{v_{t}}{2}\widetilde{u}^{2}_{1x}-\frac{4}{3}\frac{\mu(\theta)}{v}\widetilde{u}_{1xx}v_{x}\widetilde{u}_{1x}
+\sum^{3}_{i=2}\big[(\frac{\mu(\theta)}{v})_{x}\widetilde{u}_{ix}\big]_{x}\widetilde{u}_{ix}\big)\,dx
\\
&\leq C\|v_{t}\|_{L^{\infty}_{x}}\|\widetilde{u}_{1x}\|^{2}
+C(\|v_{x}\|_{L^{\infty}_{x}}+\|\theta_{x}\|_{L^{\infty}_{x}})\|\widetilde{u}_{x}\|\|\widetilde{u}_{xx}\|
\\
&\leq C(\delta+\varepsilon_{0})(\|\widetilde{u}_{x}\|^{2}+\|\widetilde{u}_{xx}\|^{2})
\leq C(\delta+\varepsilon_{0})\mathcal{D}_{2,l,q}(t).
\end{align*}
By the expressions of $Q_{2}$ and $Q_{3}$ in \eqref{2.35} and \eqref{2.36},   we can obtain
\begin{align*}
\int_{\mathbb{R}}\big(v\widetilde{u}_{1x}Q_{2}+\frac{R}{p_{+}}\widetilde{\theta}_{x}Q_{3}\big)\, dx
\leq C(\delta+\varepsilon_{0})\mathcal{D}_{2,l,q}(t)+C\delta(1+t)^{-\frac{4}{3}}.
\end{align*}
To estimate  $\mathbb{H}_{1}$, we only consider the second term in $\mathbb{H}_{1}$ while the last three terms in $\mathbb{H}_{1}$ can be treated similarly.
For the second term in $\mathbb{H}_{1}$, by \eqref{5.16}, one has
\begin{align}
\label{2.39a}
\int_{\mathbb{R}}(v\widetilde{u}_{1x})_{x}&\big(\int_{\mathbb{R}^{3}} \xi^{2}_{1}L^{-1}_{M}\Theta_{1} d\xi\big)_{x} dx
=\int_{\mathbb{R}}(v\widetilde{u}_{1x})_{x}\big[\int_{\mathbb{R}^{3}}R\theta B_{11}(\frac{\xi-u}{\sqrt{R\theta}})\frac{\Theta_{1}}{M} d\xi\big]_{x}\, dx
\nonumber\\
&=\int_{\mathbb{R}}(v\widetilde{u}_{1x})_{x}\int_{\mathbb{R}^{3}}\big[ R\theta B_{11}(\frac{\xi-u}{\sqrt{R\theta}})\frac{1}{M}\big]_{x}\Theta_{1} d\xi dx\notag\\
&\qquad+\int_{\mathbb{R}}(v\widetilde{u}_{1x})_{x}\int_{\mathbb{R}^{3}}R\theta B_{11}(\frac{\xi-u}{\sqrt{R\theta}})\frac{\Theta_{1x}}{M} \,d\xi dx.
\end{align}
By using \eqref{5.18} and the similar arguments as \eqref{2.23a}, we have from \eqref{2.39a} that
\begin{equation*}
\int_{\mathbb{R}}(v\widetilde{u}_{1x})_{x}\big(\int_{\mathbb{R}^{3}} \xi^{2}_{1} L^{-1}_{M}\Theta_{1} d\xi\big)_{x}\, dx\leq C\epsilon\|\widetilde{u}_{xx}\|^{2}+C_\epsilon\sum_{|\alpha|=2}\|\partial^{\alpha}\mathbf{g}\|^{2}_{\sigma}
+C(\delta+\varepsilon_{0})\mathcal{D}_{2,l,q}(t)
+C\delta(1+t)^{-\frac{4}{3}}.
\end{equation*}
The other terms in $\mathbb{H}_{1}$ can be handled in the same manner and it shares the similar bound. It holds that
\begin{align*}
\int_{\mathbb{R}}\mathbb{H}_{1} \, dx\leq C\epsilon(\|\widetilde{u}_{xx}\|^{2}+\|\widetilde{\theta}_{xx}\|^{2})+C_\epsilon\sum_{|\alpha|=2}\|\partial^{\alpha}\mathbf{g}\|^{2}_{\sigma}
+C(\delta+\varepsilon_{0})\mathcal{D}_{2,l,q}(t)
+C\delta(1+t)^{-\frac{4}{3}}.
\end{align*}
For $\epsilon>0$ small enough, integrating \eqref{2.37} with respect to $x$ over $\mathbb{R}$ and using the above   estimates, we arrive at
\begin{align}
\label{2.39}
&\frac{d}{dt}\int_{\mathbb{R}}\big(\frac{p_{+}}{2}\widetilde{v}^{2}_{x}+\frac{v}{2}\widetilde{u}^{2}_{1x}
+\frac{1}{2}\sum^{3}_{i=2}\widetilde{u}^{2}_{ix}
+\frac{R}{2p_{+}}\widetilde{\theta}^{2}_{x}\big)\,dx
+c(\|\widetilde{u}_{xx}\|^{2}+\|\widetilde{\theta}_{xx}\|^{2})
\nonumber\\
&\leq C\sum_{|\alpha|=2}\|\partial^{\alpha}\mathbf{g}\|^{2}_{\sigma}
+C(\delta+\varepsilon_{0})\mathcal{D}_{2,l,q}(t)+C\delta(1+t)^{-\frac{4}{3}}.
\end{align}
By the similar arguments as \eqref{2.39}, we can obtain
\begin{align}
\label{2.40}
&\frac{d}{dt}\int_{\mathbb{R}}\big(\frac{p_{+}}{2}\widetilde{v}^{2}_{t}+\frac{v}{2}\widetilde{u}^{2}_{1t}+\frac{1}{2}\sum^{3}_{i=2}\widetilde{u}^{2}_{it}
+\frac{R}{2p_{+}}\widetilde{\theta}^{2}_{t}\big)dx
+c(\|\widetilde{u}_{xt}\|^{2}+\|\widetilde{\theta}_{xt}\|^{2})
\nonumber\\
&\leq C\sum_{|\alpha|=2}\|\partial^{\alpha}\mathbf{g}\|^{2}_{\sigma}
+C(\delta+\varepsilon_{0})\mathcal{D}_{2,l,q}(t)+C\delta(1+t)^{-\frac{4}{3}}.
\end{align}

Note that  there are no dissipation terms for $[\widetilde{v}_{xx},\widetilde{v}_{tx}]$ and $[\widetilde{\rho}_{tt},\widetilde{u}_{tt},\widetilde{\theta}_{tt}]$
in \eqref{2.39} and \eqref{2.40}. To get the dissipation term $\widetilde{v}_{xx}$, applying $\partial_{x}$ to \eqref{2.24}$_{2}$ and using \eqref{2.34}, we have
\begin{align*}
\widetilde{u}_{1xt}+\frac{R\widetilde{\theta}_{xx}-p_{+}\widetilde{v}_{xx}}{v}-\frac{2v_{x}}{v}(p-\bar{p})_{x}-\frac{v_{xx}}{v}(p-\bar{p})=-\bar{u}_{1xt}-\int_{\mathbb{R}^{3}} \xi^{2}_{1}G_{xx}\,d\xi.
\end{align*}
By taking the inner product of the above equation with $-\widetilde{v}_{xx}$, we   arrive at
\begin{equation}
\label{2.41}
-(\widetilde{u}_{1x},\widetilde{v}_{xx})_{t}+c\|\widetilde{v}_{xx}\|^{2}
\leq C(\|\widetilde{u}_{xx}\|^{2}+\|\widetilde{\theta}_{xx}\|^{2}+\|\mathbf{g}_{xx}\|_{\sigma}^{2})
+C(\delta+\varepsilon_{0})\mathcal{D}_{2,l,q}(t)+C\delta(1+t)^{-\frac{4}{3}}.
\end{equation}
Applying $\partial_{x}$ to \eqref{2.24}$_{1}$ and taking the inner product of the resulting equation with $\widetilde{v}_{tx}$ yields
\begin{align}
\label{2.42}
\|\widetilde{v}_{tx}\|^{2}\leq C\|\widetilde{u}_{1xx}\|^{2}.
\end{align}
By using the system \eqref{2.24} again, one can arrive at
\begin{align}
\label{2.43}
\|[\widetilde{v}_{tt},\widetilde{u}_{tt},\widetilde{\theta}_{tt}]\|^{2}
\leq C\|[\widetilde{v}_{tx},\widetilde{u}_{tx},
\widetilde{\theta}_{tx}]\|^{2}+C\|\mathbf{g}_{tx}\|_{\sigma}^{2}
+C(\delta+\varepsilon_{0})\mathcal{D}_{2,l,q}(t)+C\delta(1+t)^{-\frac{4}{3}}.
\end{align}

For some   large constant $\widetilde{C}_{2}>0$, we have from
a suitable linear combination of \eqref{2.39}, \eqref{2.40}, \eqref{2.41}, \eqref{2.42} and  \eqref{2.43} that
\begin{multline}
\label{2.44}
\frac{d}{dt}\Big\{\widetilde{C}_{2}\sum_{|\alpha|=1}
\int_{\mathbb{R}}\big(\frac{p_{+}}{2}|\partial^{\alpha}\widetilde{v}|^{2}+\frac{v}{2}|\partial^{\alpha}\widetilde{u}_{1}|^{2}
+\frac{1}{2}\sum^{3}_{i=2}|\partial^{\alpha}\widetilde{u}_{i}|^{2}+\frac{R}{2p_{+}}|\partial^{\alpha}\widetilde{\theta}|^{2}\big)\,dx
-(\widetilde{u}_{1x},\widetilde{v}_{xx})\Big\}\\
+c\sum_{|\alpha|=2}\|\partial^{\alpha}[\widetilde{v},\widetilde{u},\widetilde{\theta}]\|^{2}
\leq C\sum_{|\alpha|=2}\|\partial^{\alpha}\mathbf{g}\|^{2}_{\sigma}
+C(\delta+\varepsilon_{0})\mathcal{D}_{2,l,q}(t)+C\delta(1+t)^{-\frac{4}{3}}.
\end{multline}

Next  we   deduce the derivative estimates for the microscopic component $\mathbf{g}$.
Applying $\partial^{\alpha}$ to \eqref{2.4} with $|\alpha|=1$ and taking the inner product with $\partial^{\alpha}\mathbf{g}$,
we can obtain
\begin{align}
\label{2.46}
&\frac{1}{2}(v\partial^{\alpha}\mathbf{g},\partial^{\alpha}\mathbf{g})_{t}-\frac{1}{2}(v_{t}\partial^{\alpha}\mathbf{g},\partial^{\alpha}\mathbf{g})
+\frac{1}{2}(u_{1x}\partial^{\alpha}\mathbf{g},\partial^{\alpha}\mathbf{g})
+(\partial^{\alpha}v\mathbf{g}_{t},\partial^{\alpha}\mathbf{g})
-(\partial^{\alpha}u_{1}\mathbf{g}_{x},\partial^{\alpha}\mathbf{g})
\nonumber\\
&=\big(\partial^{\alpha}(v\mathcal{L}\mathbf{g}),\partial^{\alpha}\mathbf{g}\big)+
\big(\partial^{\alpha}[v\Gamma(\mathbf{g},\frac{M-\mu}{\sqrt{\mu}})]+
\partial^{\alpha}[v\Gamma(\frac{M-\mu}{\sqrt{\mu}},\mathbf{g})]+\partial^{\alpha}[v\Gamma(\frac{G}{\sqrt{\mu}},\frac{G}{\sqrt{\mu}})],\partial^{\alpha}\mathbf{g}\big)
\nonumber\\
&\qquad+\big(\partial^{\alpha}\big[\frac{P_{0}(\xi_{1}\sqrt{\mu}\mathbf{g}_{x})}{\sqrt{\mu}}\big],\partial^{\alpha}\mathbf{g}\big)
-\big(\partial^{\alpha}\big[\frac{1}{\sqrt{\mu}}P_{1}\xi_{1}M\big\{\frac{|\xi-u|^{2}\widetilde{\theta}_{x}}{2R\theta^{2}}
+\frac{(\xi-u)\cdot\widetilde{u}_{x}}{R\theta}\big\}\big],\partial^{\alpha}\mathbf{g}\big)
\nonumber\\
&\qquad-\big(\partial^{\alpha}\big[\frac{P_{1}(\xi_{1}\overline{G}_{x})}{\sqrt{\mu}}\big],\partial^{\alpha}\mathbf{g}\big)
+\big(\partial^{\alpha}[u_{1}\frac{\overline{G}_{x}}{\sqrt{\mu}}],\partial^{\alpha}\mathbf{g}\big)
-\big(\partial^{\alpha}[v\frac{\overline{G}_{t}}{\sqrt{\mu}}],\partial^{\alpha}\mathbf{g}\big).
\end{align}
We shall estimate each term in \eqref{2.46}. First note that
$$
-\frac{1}{2}(v_{t}\partial^{\alpha}\mathbf{g},\partial^{\alpha}\mathbf{g})
+\frac{1}{2}(u_{1x}\partial^{\alpha}\mathbf{g},\partial^{\alpha}\mathbf{g})=0.
$$
In order to control the fourth and fifth terms,
we first estimate the following two terms.
For  any $|\alpha|=1$, we have from \eqref{1.33}, \eqref{1.35} and the imbedding inequality that
\begin{equation}
\label{2.48a}
 |(\partial^\alpha u_1\mathbf{f}_1,\mathbf{f}_2)|\leq \epsilon\|\langle\xi\rangle^{-\frac{1}{2}}\mathbf{f}_2\|^2+C_\epsilon\|\partial^\alpha u_1\|_{L^\infty_x}^2\|\langle\xi\rangle^{\frac{1}{2}}\mathbf{f}_1\|^2\leq C\epsilon\|\mathbf{f}_2\|^2_\sigma+C_\epsilon q_3(t)\|\langle\xi\rangle^{\frac{1}{2}}\mathbf{f}_1\|^2.
\end{equation}
By using \eqref{1.29}, \eqref{2.9} and the imbedding inequality, one has
\begin{align*}
 \sum_{|\alpha|=1}\|\partial^{\alpha}v\|^{3}_{L^{\infty}_x}
&\leq C(\|\bar{v}_{x}\|^{3}_{L^{\infty}_x}+\|\bar{v}_{t}\|^{3}_{L^{\infty}_x}+\sum_{|\alpha|=1}\|\partial^{\alpha} \widetilde{v}\|^{3}_{L^{\infty}_x}) \nonumber\\
 &\leq C(\|\bar{v}_{x}\|^{3}_{L^{\infty}_x}+\|\bar{v}_{t}\|^{2}+\|\bar{v}_{tx}\|^{2}+\sum_{1\leq |\alpha|\leq 2}\|\partial^{\alpha} \widetilde{v}\|^2)\leq Cq_3(t).
\end{align*}
For  any $|\alpha|=1$, by using this, \eqref{1.35}, \eqref{1.33}, the H\"{o}lder inequality and the imbedding inequality, one has
\begin{align}
\label{2.48aa}
 |(\partial^\alpha v\mathbf{f}_1,\mathbf{f}_2)|&\leq \epsilon\|\langle\xi\rangle^{-\frac{1}{2}}\mathbf{f}_2\|^2+ \frac{1}{4\epsilon}\|\partial^\alpha v\|_{L^\infty_x}^2\|\langle\xi\rangle^{\frac{1}{2}}\mathbf{f}_1\|^2 \nonumber\\
 &\leq \epsilon\|\langle\xi\rangle^{-\frac{1}{2}}\mathbf{f}_2\|^2+C_{\epsilon}\|\partial^\alpha v\|_{L^\infty_x}^2\|\langle\xi\rangle^{-\frac{1}{6}}|\mathbf{f}_1|^{\frac{1}{3}}\|^2_{L^6}\|\langle\xi\rangle^{\frac{2}{3}}|\mathbf{f}_1|^{\frac{2}{3}}\|^2_{L^3} \nonumber\\
&\leq \epsilon\|\langle\xi\rangle^{-\frac{1}{2}}\mathbf{f}_2\|^2+\epsilon\|\langle\xi\rangle^{-\frac{1}{2}}\mathbf{f}_1\|^2+C_\epsilon\|\partial^\alpha v\|_{L^\infty_x}^3\|\langle\xi\rangle^{\frac{2}{3}}|\mathbf{f}_1|^{\frac{2}{3}}\|^3_{L^3} \nonumber\\
&\leq C\epsilon(\|\mathbf{f}_2\|^2_\sigma+\|\mathbf{f}_1\|^2_\sigma)+C_\epsilon\|\partial^\alpha v\|_{L^\infty_x}^3\|\langle\xi\rangle  \mathbf{f}_1 \|^2 \nonumber\\
&\leq C\epsilon(\|\mathbf{f}_2\|^2_\sigma+\|\mathbf{f}_1\|^2_\sigma)+C_\epsilon q_3(t)\|\langle\xi\rangle  \mathbf{f}_1 \|^2.
\end{align}
By  \eqref{2.48a} and \eqref{2.48aa}, we arrive at
\begin{align}
\label{2.48aaa}
 \sum_{|\alpha|=1} |(\partial^\alpha u_1\mathbf{f}_1,\mathbf{f}_2)|+\sum_{|\alpha|=1}|(\partial^\alpha v\mathbf{f}_1,\mathbf{f}_2)|\leq C\epsilon(\|\mathbf{f}_2\|^2_\sigma+\|\mathbf{f}_1\|^2_\sigma)+C_\epsilon q_3(t)\|\langle\xi\rangle  \mathbf{f}_1 \|^2.
\end{align}
We define the functional as
\begin{align}
\label{2.48}
\mathcal{F}_{2,l,q}(t):=\sum_{|\alpha|+|\beta|\leq 2}
\|\langle \xi\rangle\partial^{\alpha}_{\beta} \mathbf{g}(t)\|^{2}_{w(\beta)}.
\end{align}
By \eqref{2.48aaa}  and \eqref{2.48}, we have
\begin{align*}
\big|&(\partial^{\alpha}v\mathbf{g}_{t},\partial^{\alpha}\mathbf{g})
-(\partial^{\alpha}u_{1}\mathbf{g}_{x},\partial^{\alpha}\mathbf{g})\big|
\nonumber\\
&\leq C\epsilon\sum_{|\alpha|=1}\|\partial^{\alpha}\mathbf{g}\|_\sigma^2+C_\epsilon q_3(t)\sum_{|\alpha|=1}\|\langle \xi\rangle\partial^{\alpha}\mathbf{g}\|^2
\nonumber\\
&\leq C\epsilon\sum_{|\alpha|=1}\|\partial^{\alpha}\mathbf{g}\|_\sigma^2+C_\epsilon q_3(t)\mathcal{F}_{2,l,q}(t).
\end{align*}
For any $|\alpha|=1$, we easily see
$$
\big(\partial^{\alpha}(v\mathcal{L}\mathbf{g}),\partial^{\alpha}\mathbf{g}\big)=\big(v\mathcal{L}\partial^{\alpha}\mathbf{g},\partial^{\alpha}\mathbf{g}\big)
+\big(\partial^{\alpha}v\mathcal{L}\mathbf{g},\partial^{\alpha}\mathbf{g}\big).
$$
From \eqref{2.6} and \eqref{2.12}, we get
$$
(v\mathcal{L}\partial^{\alpha}\mathbf{g},\partial^{\alpha}\mathbf{g})\leq - c_{2} \|v^{\frac{1}{2}}\partial^{\alpha}\mathbf{g}\|^{2}_{\sigma}\leq - \frac{c_{2}}{2}\|\partial^{\alpha}\mathbf{g}\|^{2}_{\sigma}.
$$
Recalling that $\mathcal{L}\mathbf{g}=\Gamma(\mathbf{g},\sqrt{\mu})+\Gamma(\sqrt{\mu}, \mathbf{g})$, we have from \eqref{5.7}, \eqref{2.9} and \eqref{1.29} that
\begin{align*}
(\partial^{\alpha}v\mathcal{L}\mathbf{g},\partial^{\alpha}\mathbf{g})&\leq C(\|\partial^{\alpha}\widetilde{v}\|_{L_{x}^{\infty}}
+\|\partial^{\alpha}\bar{v}\|_{L_{x}^{\infty}})\|\mathbf{g}\|_{\sigma}
\|\partial^{\alpha}\mathbf{g}\|_{\sigma}
\\
&\leq C(\delta+\varepsilon_{0})(\|\mathbf{g}\|^{2}_{\sigma}+\|\partial^{\alpha}\mathbf{g}\|^{2}_{\sigma}).
\end{align*}
It follows that
\begin{align*}
\big(\partial^{\alpha}(v\mathcal{L}\mathbf{g}),\partial^{\alpha}\mathbf{g}\big)\leq -\frac{c_{2}}{2}\|\partial^{\alpha}\mathbf{g}\|^{2}_{\sigma}+C(\delta+\varepsilon_{0})(\|\mathbf{g}\|^{2}_{\sigma}+\|\partial^{\alpha}\mathbf{g}\|^{2}_{\sigma}).
\end{align*}
By using \eqref{5.30} and \eqref{5.34}, one has
\begin{align*}
&\big|\big(\partial^{\alpha}[v\Gamma(\mathbf{g},\frac{M-\mu}{\sqrt{\mu}})]+
\partial^{\alpha}[v\Gamma(\frac{M-\mu}{\sqrt{\mu}},\mathbf{g})]
+\partial^{\alpha}[v\Gamma(\frac{G}{\sqrt{\mu}},\frac{G}{\sqrt{\mu}})],\partial^{\alpha}\mathbf{g}\big)\big|
\\
&\leq C(\eta_0+\delta+\varepsilon_{0})\big(\|\partial^{\alpha}\mathbf{g}\|_{\sigma}^2+\mathcal{D}_{2,l,q}(t)\big)
+C\delta(1+t)^{-\frac{4}{3}}.
\end{align*}
By using \eqref{1.35}, \eqref{1.20a}, \eqref{2.9}, \eqref{1.29} and  the imbedding inequality, we have
\begin{align*}
&\big|\big(\partial^{\alpha}\big[\frac{P_{0}(\xi_{1}\sqrt{\mu}\mathbf{g}_{x})}{\sqrt{\mu}}\big],\partial^{\alpha}\mathbf{g}\big)\big|
+\big|\big(\partial^{\alpha}\big[\frac{1}{\sqrt{\mu}}P_{1}\xi_{1}M\big\{\frac{|\xi-u|^{2}\widetilde{\theta}_{x}}{2R\theta^{2}}
+\frac{(\xi-u)\cdot\widetilde{u}_{x}}{R\theta}\big\}\big],\partial^{\alpha}\mathbf{g}\big)\big|
\\
&\leq \epsilon\|\langle \xi\rangle^{-\frac{1}{2}}\partial^{\alpha}\mathbf{g}\|^{2}+C_{\epsilon}\|\langle \xi\rangle^{\frac{1}{2}}\partial^{\alpha}\big[\frac{P_{0}(\xi_{1}\sqrt{\mu}\mathbf{g}_{x})}{\sqrt{\mu}}\big]\|^{2}
\\
&\qquad+C_{\epsilon}\|\langle \xi\rangle^{\frac{1}{2}}\partial^{\alpha}\big[\frac{1}{\sqrt{\mu}}P_{1}\xi_{1}M\big\{\frac{|\xi-u|^{2}\widetilde{\theta}_{x}}{2R\theta^{2}}
+\frac{(\xi-u)\cdot\widetilde{u}_{x}}{R\theta}\big\}\big]\|^{2}
\\
&\leq C\epsilon\|\partial^{\alpha}\mathbf{g}\|_{\sigma}^2+C_{\epsilon}\|\partial^{\alpha}\mathbf{g}_{x}\|_{\sigma}^2+C_\epsilon
\|[\partial^{\alpha}\widetilde{u}_{x},\partial^{\alpha}\widetilde{\theta}_{x}]\|^2
+C_\epsilon(\delta+\varepsilon_{0})\mathcal{D}_{2,l,q}(t).
\end{align*}
Here we used the fact that $|\langle \xi\rangle^b\mu^{-\frac{1}{2}}M|_2\leq C$  for any $b\ge0$ by \eqref{2.12}.

By   \eqref{5.38}, \eqref{1.35} and \eqref{1.20a}, one has
\begin{align*}
&\big|\big(\partial^{\alpha}\big[\frac{P_{1}(\xi_{1}\overline{G}_{x})}{\sqrt{\mu}}\big],\partial^{\alpha}\mathbf{g}\big)\big|
+\big|\big(\partial^{\alpha}[u_{1}\frac{\overline{G}_{x}}{\sqrt{\mu}}],\partial^{\alpha}\mathbf{g}\big)\big|
+\big|\big(\partial^{\alpha}[v\frac{\overline{G}_{t}}{\sqrt{\mu}}],\partial^{\alpha}\mathbf{g}\big)\big|
\\
&\leq \epsilon\|\langle \xi\rangle^{-\frac{1}{2}}\partial^{\alpha}\mathbf{g}\|^{2}+C_\epsilon\|\langle \xi\rangle^{\frac{1}{2}}
\partial^{\alpha}\big[\frac{P_{1}(\xi_{1}\overline{G}_{x})}{\sqrt{\mu}}\big]\|^{2}
+C_\epsilon\|\langle \xi\rangle^{\frac{1}{2}}\partial^{\alpha}[u_{1}\frac{\overline{G}_{x}}{\sqrt{\mu}}]\|^{2}
+C_\epsilon\|\langle \xi\rangle^{\frac{1}{2}}\partial^{\alpha}[v\frac{\overline{G}_{t}}{\sqrt{\mu}}]\|^{2}
\\
&\leq C\epsilon\|\partial^{\alpha}\mathbf{g}\|_{\sigma}^2+C_\epsilon(\delta+\varepsilon_{0})\mathcal{D}_{2,l,q}(t)+C_\epsilon\delta(1+t)^{-\frac{4}{3}}.
\end{align*}
Substituting the above   estimates into \eqref{2.46} and taking a small $\epsilon>0$, one has
\begin{multline}
\label{2.49}
\frac{1}{2}\frac{d}{dt}\sum_{|\alpha|=1}\|v^{\frac{1}{2}}\partial^{\alpha}\mathbf{g}\|^{2}+c\sum_{|\alpha|=1}\|\partial^{\alpha}\mathbf{g}\|^{2}_{\sigma}
\leq C\sum_{|\alpha|=1}\big(\|\partial^{\alpha}\mathbf{g}_{x}\|^{2}_{\sigma}+\|[\partial^{\alpha}\widetilde{u}_{x},\partial^{\alpha}\widetilde{\theta}_{x}]\|^2\big)\\
+C(\eta_{0}+\delta+\varepsilon_{0})\mathcal{D}_{2,l,q}(t)+C\delta(1+t)^{-\frac{4}{3}}+Cq_{3}(t)\mathcal{F}_{2,l,q}(t).
\end{multline}
Here $\mathcal{D}_{2,l,q}(t)$, $q_{3}(t)$ and $\mathcal{F}_{2,l,q}(t)$ given by \eqref{2.11}, \eqref{1.33} and \eqref{2.48}, respectively.

Finally, we will deduce the second order derivative  estimates for the microscopic component $\mathbf{g}$.
By \eqref{1.16} and \eqref{2.5}, one has
\begin{align}
\label{2.50}
v(\frac{F}{\sqrt{\mu}})_{t}-u_{1}(\frac{F}{\sqrt{\mu}})_{x}
+\xi_{1}(\frac{F}{\sqrt{\mu}})_{x}
&=v\mathcal{L}\mathbf{g}+v\Gamma(\mathbf{g},\frac{M-\mu}{\sqrt{\mu}})+
v\Gamma(\frac{M-\mu}{\sqrt{\mu}},\mathbf{g})
+v\Gamma(\frac{G}{\sqrt{\mu}},\frac{G}{\sqrt{\mu}})
\nonumber\\
&\qquad+\frac{1}{\sqrt{\mu}}P_1\xi_1M\big\{\frac{|\xi-u|^2\overline{\theta}_x}{2R\theta^2}
+\frac{(\xi-u)\cdot \overline{u}_{x}}{R\theta}\big\}.
\end{align}
Applying $\partial^{\alpha}$ to \eqref{2.50} with $|\alpha|=2$ and taking the inner product with $\frac{\partial^{\alpha}F}{\sqrt{\mu}}$,
one has
\begin{align}
\label{2.51}
&\frac{1}{2}\big(v\frac{\partial^{\alpha}F}{\sqrt{\mu}},\frac{\partial^{\alpha}F}{\sqrt{\mu}}\big)_{t}
+\big(\xi_{1}(\frac{\partial^{\alpha}F}{\sqrt{\mu}})_{x},\frac{\partial^{\alpha}F}{\sqrt{\mu}}\big)
-\frac{1}{2}\big(v_{t}\frac{\partial^{\alpha}F}{\sqrt{\mu}},\frac{\partial^{\alpha}F}{\sqrt{\mu}}\big)
-\big(u_{1}(\frac{\partial^{\alpha}F}{\sqrt{\mu}})_{x},\frac{\partial^{\alpha}F}{\sqrt{\mu}}\big)
\nonumber\\
&\qquad+\sum_{1\leq|\alpha_{1}|\leq|\alpha|}\big(C^{\alpha_{1}}_{\alpha}\big[\partial^{\alpha_{1}}v\frac{\partial^{\alpha-\alpha_{1}}F_{t}}{\sqrt{\mu}}
-\partial^{\alpha_{1}}u_{1}\frac{\partial^{\alpha-\alpha_{1}}F_{x}}{\sqrt{\mu}}\big],\frac{\partial^{\alpha}F}{\sqrt{\mu}}\big)
\nonumber\\
&=\big(\partial^{\alpha}(v\mathcal{L}\mathbf{g}),\frac{\partial^{\alpha}F}{\sqrt{\mu}}\big)
+\big(\frac{1}{\sqrt{\mu}}\partial^{\alpha}\big[P_1\xi_1M\big\{\frac{|\xi-u|^2\overline{\theta}_x}{2R\theta^2}
+\frac{(\xi-u)\cdot \overline{u}_{x}}{R\theta}\big\}\big],\frac{\partial^{\alpha}F}{\sqrt{\mu}}\big)
\nonumber\\
&\qquad+\big(\partial^{\alpha}[v\Gamma(\mathbf{g},\frac{M-\mu}{\sqrt{\mu}})]+
\partial^{\alpha}[v\Gamma(\frac{M-\mu}{\sqrt{\mu}},\mathbf{g})]
+\partial^{\alpha}[v\Gamma(\frac{G}{\sqrt{\mu}},\frac{G}{\sqrt{\mu}})],\frac{\partial^{\alpha}F}{\sqrt{\mu}}\big).
\end{align}
We will estimate \eqref{2.51} term by term. The second term of \eqref{2.51} vanishes by integration by parts. By $v_{t}=u_{1x}$ and the integration by parts, we can find that
$$
-\frac{1}{2}\big(v_{t}\frac{\partial^{\alpha}F}{\sqrt{\mu}},\frac{\partial^{\alpha}F}{\sqrt{\mu}}\big)
-\big(u_{1}(\frac{\partial^{\alpha}F}{\sqrt{\mu}})_{x},\frac{\partial^{\alpha}F}{\sqrt{\mu}}\big)=0.
$$
To control the fifth term of \eqref{2.51}. If $|\alpha_{1}|=1$, then $|\alpha-\alpha_{1}|=1$ since we consider $|\alpha|=2$.
Recalling that $F=M+\overline{G}+\sqrt{\mu}\mathbf{g}$, we have from \eqref{1.35}, \eqref{5.38}, \eqref{2.9},
\eqref{1.29}, \eqref{2.48aaa}  and \eqref{2.48} that
\begin{multline}
\label{2.59a}
|\big(\partial^{\alpha_{1}}v\frac{\partial^{\alpha-\alpha_{1}}F_{t}}{\sqrt{\mu}},\frac{\partial^{\alpha}F}{\sqrt{\mu}}\big)|
 \leq |(\partial^{\alpha_{1}}v\partial^{\alpha-\alpha_{1}}\mathbf{g}_{t},\partial^{\alpha}\mathbf{g})|\\
+C\|\partial^{\alpha_{1}}v\|_{L_{x}^{\infty}}
\Big(\|\langle \xi\rangle^{-\frac{1}{2}}\partial^{\alpha-\alpha_{1}}\mathbf{g}_{t}\|^{2}+\|\langle \xi\rangle^{-\frac{1}{2}}\partial^{\alpha}\mathbf{g}\|^{2}\\
\qquad\qquad\qquad\qquad\qquad\qquad\qquad\quad+
\|\langle \xi\rangle^{\frac{1}{2}}\frac{\partial^{\alpha-\alpha_{1}}(\overline{G}+M)_{t}}{\sqrt{\mu}}\|^{2}
+\|\langle \xi\rangle^{\frac{1}{2}}\frac{\partial^{\alpha}(\overline{G}+M)}{\sqrt{\mu}}\|^{2}
\Big)\\
\leq C\epsilon(\|\partial^{\alpha}\mathbf{g}\|_{\sigma}^{2}+\|\partial^{\alpha-\alpha_{1}}\mathbf{g}_{t}\|_{\sigma}^{2})
+C_{\epsilon}q_{3}(t)\mathcal{F}_{2,l,q}(t)+C(\delta+\varepsilon_{0})\mathcal{D}_{2,l,q}(t)+C\delta(1+t)^{-\frac{4}{3}}.
\end{multline}
If $|\alpha_{1}|=2$, then $|\alpha-\alpha_{1}|=0$ and we can obtain
\begin{align*}
|(\partial^{\alpha_{1}}v\mathbf{g}_{t},\partial^{\alpha}\mathbf{g})|
&\leq  \epsilon\|\langle \xi\rangle^{-\frac{1}{2}}\partial^{\alpha}\mathbf{g}\|^{2}
+C_{\epsilon}\|\partial^{\alpha_{1}}v\|^{2}\big\||\langle \xi\rangle^{\frac{1}{2}}\mathbf{g}_{t}|_2\big\|_{L_{x}^{\infty}}^{2}
\\
&\leq C\epsilon\|\partial^{\alpha}\mathbf{g}\|_{\sigma}^{2}+C_{\epsilon}(\|\partial^{\alpha_{1}}\widetilde{v}\|^{2}
+\|\partial^{\alpha_{1}}\bar{v}\|^{2})
\|\langle \xi\rangle^{\frac{1}{2}}\mathbf{g}_{t}\|\|\langle \xi\rangle^{\frac{1}{2}}\mathbf{g}_{tx}\|
\\
&\leq C\epsilon\|\partial^{\alpha}\mathbf{g}\|_{\sigma}^{2}+C_{\epsilon}q_{3}(t)\mathcal{F}_{2,l,q}(t),
\end{align*}
and
\begin{align}
\label{2.53}
\|\langle \xi\rangle^{-\frac{1}{2}}\frac{\partial^{\alpha}F}{\sqrt{\mu}}\|^{2}
&\leq C\|\langle \xi\rangle^{-\frac{1}{2}}\partial^{\alpha}\mathbf{g}\|^{2}
+C\|\langle \xi\rangle^{-\frac{1}{2}}\frac{\partial^{\alpha}\overline{G}}{\sqrt{\mu}}\|^{2}
+C\|\langle \xi\rangle^{-\frac{1}{2}}\frac{\partial^{\alpha}M}{\sqrt{\mu}}\|^{2}
\nonumber\\
&\leq C(\|\partial^{\alpha}\mathbf{g}\|_{\sigma}^{2}
+\|\partial^{\alpha}[\widetilde{v},\widetilde{u},\widetilde{\theta}]\|^{2})
+C(\delta+\varepsilon_{0})\mathcal{D}_{2,l,q}(t)+C\delta(1+t)^{-\frac{4}{3}}.
\end{align}
By these facts, if $|\alpha_{1}|=2$, we have
\begin{align}
\label{2.60a}
|\big(\partial^{\alpha_{1}}v\frac{\partial^{\alpha-\alpha_{1}}F_{t}}{\sqrt{\mu}},\frac{\partial^{\alpha}F}{\sqrt{\mu}}\big)|
&\leq |(\partial^{\alpha_{1}}v\mathbf{g}_{t},\partial^{\alpha}\mathbf{g})|
+C\|\partial^{\alpha_{1}}v\| \big\||\langle \xi\rangle^{-\frac{1}{2}}\mathbf{g}_{t}|_{2} \big\|_{L_{x}^{\infty}}
\|\langle \xi\rangle^{\frac{1}{2}}\frac{\partial^{\alpha}(\overline{G}+M)}{\sqrt{\mu}}\|
\nonumber\\
&\qquad+C\|\partial^{\alpha_{1}}v\|
 \big\||\langle \xi\rangle^{\frac{1}{2}}\frac{(\overline{G}+M)_{t}}{\sqrt{\mu}}|_{2} \big\|_{L_{x}^{\infty}}
\|\langle \xi\rangle^{-\frac{1}{2}}\frac{\partial^{\alpha}F}{\sqrt{\mu}}\|
\nonumber\\
&\leq C\epsilon\|\partial^{\alpha}\mathbf{g}\|_{\sigma}^{2}+C_{\epsilon}q_{3}(t)\mathcal{F}_{2,l,q}(t)
+C(\delta+\varepsilon_{0})\mathcal{D}_{2,l,q}(t)+C\delta(1+t)^{-\frac{4}{3}}.
\end{align}
It follows from \eqref{2.59a} and \eqref{2.60a} that
\begin{align}
\label{2.54a}
&\sum_{1\leq|\alpha_{1}|\leq|\alpha|}C^{\alpha_{1}}_{\alpha}|\big(\partial^{\alpha_{1}}v\frac{\partial^{\alpha-\alpha_{1}}F_{t}}{\sqrt{\mu}},
\frac{\partial^{\alpha}F}{\sqrt{\mu}}\big)|
\nonumber\\
&\leq C\epsilon\sum_{|\alpha|=2}\|\partial^{\alpha}\mathbf{g}\|_{\sigma}^{2}+C_{\epsilon}q_{3}(t)\mathcal{F}_{2,l,q}(t)
+C(\delta+\varepsilon_{0})\mathcal{D}_{2,l,q}(t)
+C\delta(1+t)^{-\frac{4}{3}}.
\end{align}
Similar arguments as \eqref{2.54a} imply
\begin{align}
\label{2.54}
&\sum_{1\leq|\alpha_{1}|\leq|\alpha|}C^{\alpha_{1}}_{\alpha}|\big(\partial^{\alpha_{1}}u_{1}(\frac{\partial^{\alpha-\alpha_{1}}F_{x}}{\sqrt{\mu}})
,\frac{\partial^{\alpha}F}{\sqrt{\mu}}\big)|
\nonumber\\
&\leq C\epsilon\sum_{|\alpha|=2}\|\partial^{\alpha}\mathbf{g}\|_{\sigma}^{2}+C_{\epsilon}q_{3}(t)\mathcal{F}_{2,l,q}(t)
+C(\delta+\varepsilon_{0})\mathcal{D}_{2,l,q}(t)
+C\delta(1+t)^{-\frac{4}{3}}.
\end{align}
For the first term on the right hand side of \eqref{2.51}, we have
$$
\big(\partial^{\alpha}(v\mathcal{L}\mathbf{g}),\frac{\partial^{\alpha}F}{\sqrt{\mu}}\big)
=\big(v\mathcal{L}\partial^{\alpha}\mathbf{g},\frac{\partial^{\alpha}F}{\sqrt{\mu}}\big)
+\sum_{1\leq|\alpha_{1}|\leq|\alpha|}C^{\alpha_{1}}_{\alpha}
\big(\partial^{\alpha_{1}}v\mathcal{L}\partial^{\alpha-\alpha_{1}}\mathbf{g},\frac{\partial^{\alpha}F}{\sqrt{\mu}}\big).
$$
Recalling that $F=M+\overline{G}+\sqrt{\mu}\mathbf{g}$, we have from \eqref{2.6} and \eqref{2.12} that
$$
-(v\mathcal{L}\partial^{\alpha}\mathbf{g},\partial^{\alpha}\mathbf{g})\geq  c_{2} \|v^{\frac{1}{2}}\partial^{\alpha}\mathbf{g}\|_{\sigma}^{2} \geq
\frac{c_{2}}{2}\|\partial^{\alpha}\mathbf{g}\|_{\sigma}^{2}.
$$
Recalling that $\mathcal{L}\mathbf{g}=\Gamma(\sqrt{\mu},\mathbf{g})+\Gamma(\mathbf{g},\sqrt{\mu})$, we have from
\eqref{5.7}, \eqref{5.38} and \eqref{1.29} that
$$
|(v\mathcal{L}\partial^{\alpha}\mathbf{g},\frac{\partial^{\alpha}\overline{G}}{\sqrt{\mu}})|
\leq C\|v\partial^{\alpha}\mathbf{g}\|_{\sigma}\|\frac{\partial^{\alpha}\overline{G}}{\sqrt{\mu}}\|_{\sigma}
\leq \epsilon\|\partial^{\alpha}\mathbf{g}\|_{\sigma}^{2}+C_{\epsilon}\delta\mathcal{D}_{2,l,q}(t)+C_{\epsilon}\delta(1+t)^{-\frac{4}{3}}.
$$
For $|\bar{\alpha}|= 2$, it is  seen by \eqref{1.5} with $\rho=1/v$ that
\begin{align*}
\partial^{\bar{\alpha}}M&=M\big(v\partial^{\bar{\alpha}}(\frac{1}{v})-\frac{3\partial^{\bar{\alpha}}\theta}{2\theta} +\frac{(\xi-u)^{2}\partial^{\bar{\alpha}}\theta}{2R\theta^{2}}+\sum^{3}_{i=1}
\frac{\partial^{\bar{\alpha}}u_{i}(\xi_{i}-u_{i})}{R\theta}\big)+\cdot\cdot\cdot
\nonumber\\
&=\big(\mu+(M-\mu)\big)\big(v\partial^{\bar{\alpha}}(\frac{1}{v})-\frac{3\partial^{\bar{\alpha}}\theta}{2\theta} +\frac{(\xi-u)^{2}\partial^{\bar{\alpha}}\theta}{2R\theta^{2}}+\sum^{3}_{i=1}
\frac{\partial^{\bar{\alpha}}u_{i}(\xi_{i}-u_{i})}{R\theta}\big)+\cdot\cdot\cdot\notag\\
&:=J^{\bar{\alpha}}_{1}
+J^{\bar{\alpha}}_{2}+J^{\bar{\alpha}}_{3}.
\end{align*}
Here the terms $J^{\bar{\alpha}}_{1}$ and $J^{\bar{\alpha}}_{2}$ are the higher order derivatives of $[v,u,\theta]$
with $\mu$ and $M-\mu$ and $J^{\bar{\alpha}}_{3}$ is the low order derivatives with $M$.
Since $\frac{J^{\alpha}_{1}}{\sqrt{\mu}}\in\ker{\mathcal{L}}$, it follows that
$(v\mathcal{L}\partial^{\alpha}\mathbf{g},\frac{J^{\alpha}_{1}}{\sqrt{\mu}})=0$. For the term
$\frac{J^{\alpha}_{2}}{\sqrt{\mu}}$, we have from  \eqref{5.7}, \eqref{5.32} and \eqref{1.29} that
$$
(v\mathcal{L}\partial^{\alpha}\mathbf{g},\frac{J^{\alpha}_{2}}{\sqrt{\mu}})
\leq C\|v\partial^{\alpha}\mathbf{g}\|_{\sigma}\|\frac{J^{\alpha}_{2}}{\sqrt{\mu}}\|_\sigma
\leq C(\eta_{0}+\varepsilon_{0})(\|\partial^{\alpha}\mathbf{g}\|_{\sigma}^{2}+\|\partial^{\alpha}[\widetilde{v},\widetilde{u},\widetilde{\theta}]\|^{2})
+C\delta(1+t)^{-\frac{4}{3}}.
$$
Similarly, it holds that
$$
(v\mathcal{L}\partial^{\alpha}\mathbf{g},\frac{J^{\alpha}_{3}}{\sqrt{\mu}})
\leq C\|v\partial^{\alpha}\mathbf{g}\|_{\sigma}\|\frac{J^{\alpha}_{3}}{\sqrt{\mu}}\|_\sigma
\leq \epsilon\|\partial^{\alpha}\mathbf{g}\|_{\sigma}^{2}+C_{\epsilon}(\delta+\varepsilon_{0})\mathcal{D}_{2,l,q}(t)+C_{\epsilon}\delta(1+t)^{-\frac{4}{3}}.
$$
By using \eqref{1.35} and the similar arguments as \eqref{2.53}, we obtain
\begin{align}\label{2.55a}
\|  \frac{\partial^{\alpha}F}{\sqrt{\mu}}\|^{2}_\sigma
&\leq C\| \partial^{\alpha}\mathbf{g}\|^{2}_\sigma
+C\| \frac{\partial^{\alpha}\overline{G}}{\sqrt{\mu}}\|^{2}_\sigma
+C\| \frac{\partial^{\alpha}M}{\sqrt{\mu}}\|^{2}_\sigma
\nonumber\\
&\leq C(\|\partial^{\alpha}\mathbf{g}\|_{\sigma}^{2}
+\|\partial^{\alpha}[\widetilde{v},\widetilde{u},\widetilde{\theta}]\|^{2})
+C(\delta+\varepsilon_{0})\mathcal{D}_{2,l,q}(t)+C\delta(1+t)^{-\frac{4}{3}}.
\end{align}
Recalling that $\mathcal{L}\mathbf{g}=\Gamma(\sqrt{\mu},\mathbf{g})+\Gamma(\mathbf{g},\sqrt{\mu})$,  we have from \eqref{5.7}, \eqref{2.53} and the imbedding inequality that
\begin{align*}
\sum_{1\leq|\alpha_{1}|\leq|\alpha|}C^{\alpha_{1}}_{\alpha}
|\big(\partial^{\alpha_{1}}v\mathcal{L}\partial^{\alpha-\alpha_{1}}\mathbf{g},\frac{\partial^{\alpha}F}{\sqrt{\mu}}\big)|
&\leq C\sum_{1\leq|\alpha_{1}|\leq|\alpha|}\int_{\mathbb{R}}|\partial^{\alpha_{1}}v|
|\partial^{\alpha-\alpha_{1}}\mathbf{g}|_{\sigma}|\frac{\partial^{\alpha}F}{\sqrt{\mu}}|_{\sigma}\,dx
\\
&\leq C(\delta+\varepsilon_{0})\mathcal{D}_{2,l,q}(t)+C\delta(1+t)^{-\frac{4}{3}}.
\end{align*}
For any small $\epsilon>0$, by using   the above  estimates, we arrive at
\begin{align*}
\big(\partial^{\alpha}(v\mathcal{L}\mathbf{g}),\frac{\partial^{\alpha}F}{\sqrt{\mu}}\big)&\leq-\frac{c_{2}}{4}\|\partial^{\alpha}\mathbf{g}\|_{\sigma}^{2}
+C(\eta_{0}+\varepsilon_{0})(\|\partial^{\alpha}\mathbf{g}\|_{\sigma}^{2}+\|\partial^{\alpha}[\widetilde{v},\widetilde{u},\widetilde{\theta}]\|^{2})\\
&\qquad+C(\delta+\varepsilon_{0})\mathcal{D}_{2,l,q}(t)+C\delta(1+t)^{-\frac{4}{3}}.
\end{align*}
By \eqref{2.35a} and \eqref{2.53}, we get
\begin{align*}
&|\big(\frac{1}{\sqrt{\mu}}\partial^{\alpha}P_1\xi_1M\big\{\frac{|\xi-u|^2\overline{\theta}_x}{2R\theta^2}
+\frac{(\xi-u)\cdot \overline{u}_{x}}{R\theta}\big\},\frac{\partial^{\alpha}F}{\sqrt{\mu}}\big)|
\\
&\leq C\delta\|\langle \xi\rangle^{-\frac{1}{2}}\frac{\partial^{\alpha}F}{\sqrt{\mu}}\|^{2}
+C\frac{1}{\delta}\|\langle \xi\rangle^{\frac{1}{2}}\frac{1}{\sqrt{\mu}}\partial^{\alpha}P_1\xi_1M\big\{\frac{|\xi-u|^2\overline{\theta}_x}{2R\theta^2}
+\frac{(\xi-u)\cdot \overline{u}_{x}}{R\theta}\big\}\|^{2}
\\
&\leq C(\delta+\varepsilon_{0})\mathcal{D}_{2,l,q}(t)+C\delta(1+t)^{-\frac{4}{3}}.
\end{align*}
By \eqref{2.55a}, \eqref{5.30} and \eqref{5.34}, one has
\begin{align*}
&|\big(\partial^{\alpha}[v\Gamma(\mathbf{g},\frac{M-\mu}{\sqrt{\mu}})]+
\partial^{\alpha}[v\Gamma(\frac{M-\mu}{\sqrt{\mu}},\mathbf{g})]
+\partial^{\alpha}[v\Gamma(\frac{G}{\sqrt{\mu}},\frac{G}{\sqrt{\mu}})],\frac{\partial^{\alpha}F}{\sqrt{\mu}}\big)|
\\
&\leq C(\eta_0+\delta+\varepsilon_{0})\|\partial^{\alpha}\mathbf{g}\|_{\sigma}^{2}+C(\eta_0+\delta+\varepsilon_{0})\mathcal{D}_{2,l,q}(t)+C\delta(1+t)^{-\frac{4}{3}}.
\end{align*}
Hence, we substitute the above   estimates into \eqref{2.51} and take a small  $\epsilon>0$   to get
\begin{align}
\label{2.56}
 \frac{d}{dt}\sum_{|\alpha|=2}\|v^{\frac{1}{2}}\frac{\partial^{\alpha}F}{\sqrt{\mu}}\|^{2}+c\sum_{|\alpha|=2}\|\partial^{\alpha}\mathbf{g}\|_{\sigma}^{2}&\leq C(\eta_{0}+\delta+\varepsilon_{0})\mathcal{D}_{2,l,q}(t)\notag\\
&\qquad+C\delta(1+t)^{-\frac{4}{3}}+Cq_{3}(t)\mathcal{F}_{2,l,q}(t).
\end{align}

For some large constants $\widetilde{C}_{4}>0$ and $ \widetilde{C}_{3}>0 $ with $\widetilde{C}_{4}\gg\widetilde{C}_{3}$, by a suitable linear combination of \eqref{2.44}, \eqref{2.49} and \eqref{2.56}, we have
\begin{align}
\label{2.57}
&\widetilde{C}_{3}\frac{d}{dt} \Big(\widetilde{C}_{2}\sum_{|\alpha|=1}
\int_{\mathbb{R}}\big(\frac{p_{+}}{2}|\partial^{\alpha}\widetilde{v}|^{2}+\frac{v}{2}|\partial^{\alpha}\widetilde{u}_{1}|^{2}
+\frac{1}{2}\sum^{3}_{i=2}|\partial^{\alpha}\widetilde{u}_{i}|^{2}+\frac{R}{2p_{+}}|\partial^{\alpha}\widetilde{\theta}|^{2}\big)dx
-(\widetilde{u}_{1x},\widetilde{v}_{xx})\Big)
\nonumber\\
&\qquad+\frac{d}{dt}\Big\{\sum_{|\alpha|=1}\|v^{\frac{1}{2}}\partial^{\alpha}\mathbf{g}\|^{2}
+\widetilde{C}_{4}\sum_{|\alpha|=2}\|v^{\frac{1}{2}}\frac{\partial^{\alpha}F}{\sqrt{\mu}}\|^{2}\Big\}
+c\sum_{|\alpha|=2}\|\partial^{\alpha}[\widetilde{v},\widetilde{u},\widetilde{\theta}]\|^{2}
+c\sum_{1\leq|\alpha|\leq2}\|\partial^{\alpha}\mathbf{g}\|^{2}_{\sigma}
\nonumber\\
&\leq C(\eta_{0}+\delta+\varepsilon_{0})\mathcal{D}_{2,l,q}(t)
+C\delta(1+t)^{-\frac{4}{3}}+Cq_{3}(t)\mathcal{F}_{2,l,q}(t).
\end{align}
 This completes the proof of the derivative estimates of $[\widetilde{v},\widetilde{u},\widetilde{\theta}]$
and $\mathbf{g}$.

For some large constant $\widetilde{C}_{5}>0$ with $\widetilde{C}_{5}\gg \widetilde{C}_{1}$ in \eqref{2.31}, we denote $E_{1}(t)$ as
\begin{align}
\label{2.59}
E_{1}(t)&=\Big\{\int_{\mathbb{R}}\Big(\widetilde{C}_1\big(\frac{2}{3}\bar{\theta}\Phi(\frac{v}{\bar{v}})+\frac{1}{2}\widetilde{u}^{2}
+\bar{\theta}\Phi(\frac{\theta}{\bar{\theta}})\big)-\widetilde{C}_1\kappa_{1}\widetilde{u}_{1}\widetilde{v}_{x}\Big)\,dx
+\|v^{\frac{1}{2}}\mathbf{g}\|^{2}\Big\}
\nonumber\\
&\qquad+\widetilde{C}_{5}\widetilde{C}_{3} \Big\{\widetilde{C}_{2}\sum_{|\alpha|=1}
\int_{\mathbb{R}}\big(\frac{p_{+}}{2}|\partial^{\alpha}\widetilde{v}|^{2}+\frac{v}{2}|\partial^{\alpha}\widetilde{u}_{1}|^{2}
+\frac{1}{2}\sum^{3}_{i=2}|\partial^{\alpha}\widetilde{u}_{i}|^{2}+\frac{R}{2p_{+}}|\partial^{\alpha}\widetilde{\theta}|^{2}\big)\,dx\notag\\
&\qquad\qquad\qquad\qquad\qquad-(\widetilde{u}_{1x},\widetilde{v}_{xx})\Big\}
\nonumber\\
&\qquad+\widetilde{C}_{5}\Big\{\sum_{|\alpha|=1}\|v^{\frac{1}{2}}\partial^{\alpha}\mathbf{g}\|^{2}
+\widetilde{C}_{4}\sum_{|\alpha|=2}\|v^{\frac{1}{2}}\frac{\partial^{\alpha}F}{\sqrt{\mu}}\|^{2}\Big\}.
\end{align}
By using this and taking the summation of \eqref{2.31} and \eqref{2.57}$\times\widetilde{C}_{5}$, we have
\begin{align}
\label{2.58}
&\frac{d}{dt}E_{1}(t)+c\sum_{1\leq|\alpha|\leq2}\|\partial^{\alpha}[\widetilde{v},\widetilde{u},\widetilde{\theta}]\|^{2}
+c\sum_{|\alpha|\leq2}\|\partial^{\alpha}\mathbf{g}\|^{2}_{\sigma}
\nonumber\\
&\leq C(\eta_{0}+\delta+\varepsilon_{0})\mathcal{D}_{2,l,q}(t)
+C\delta(1+t)^{-\frac{4}{3}}+C\delta\int_{\mathbb{R}}(\widetilde{v}^{2}+\widetilde{\theta}^{2})\omega^{2}\,dx
+Cq_{3}(t)\mathcal{F}_{2,l,q}(t).
\end{align}
Here $\mathcal{D}_{2,l,q}(t)$, $q_{3}(t)$ and $\mathcal{F}_{2,l,q}(t)$ are defined by \eqref{2.11}, \eqref{1.33} and \eqref{2.48}, respectively.
This estimate is the main energy estimate in this section and this completes the proof of the non-weighted energy estimates of solution.\qed

\section{ Weighted energy estimates}\label{sec.3}

In this section we will consider energy estimates with the weight function $w(\beta)$ in \eqref{1.32} in order to close the a priori assumption. And the weight function will be acted on the microscopic component $\mathbf{g}$ for the equation  \eqref{2.4}.

\subsection{Time-spatial derivative  estimates}
We first  consider the   estimates of the microscopic component $\mathbf{g}$ with the weight $w=w(0)$
in \eqref{1.32}. Applying $\partial^\alpha$ to \eqref{2.4} with $|\alpha|\leq 1$ and taking
the inner product of the resulting equation with $w^2(0)\partial^\alpha \mathbf{g}$ over $\mathbb{R}\times{\mathbb R}^3$, one has
\begin{align}
\label{3.1}
&(\partial^{\alpha}(v\mathbf{g}_{t}),w^2(0)\partial^\alpha \mathbf{g})
-(\partial^{\alpha}(u_{1}\mathbf{g}_{x}),w^2(0)\partial^\alpha \mathbf{g})
-(\partial^{\alpha}(v\mathcal{L}\mathbf{g}),w^2(0)\partial^\alpha \mathbf{g})
\nonumber\\
&=\big(\partial^{\alpha}[v\Gamma(\mathbf{g},\frac{M-\mu}{\sqrt{\mu}})]+\partial^{\alpha}[v\Gamma(\frac{M-\mu}{\sqrt{\mu}},\mathbf{g})]
+\partial^{\alpha}[v\Gamma(\frac{G}{\sqrt{\mu}},\frac{G}{\sqrt{\mu}})],w^2(0)\partial^\alpha \mathbf{g}\big)
\nonumber\\
&\quad-\big(\partial^{\alpha}[\frac{1}{\sqrt{\mu}}P_{1}\xi_{1}M\big\{\frac{|\xi-u|^{2}
\widetilde{\theta}_{x}}{2R\theta^{2}}+\frac{(\xi-u)\cdot\widetilde{u}_{x}}{R\theta}\big\}],w^2(0)\partial^\alpha \mathbf{g}\big)
+\big(\partial^{\alpha}[\frac{P_{0}(\xi_{1}\sqrt{\mu}\mathbf{g}_{x})}{\sqrt{\mu}}]
,w^2(0)\partial^\alpha \mathbf{g}\big)
\nonumber\\
&\quad-\big(\partial^{\alpha}[\frac{P_{1}(\xi_{1}\overline{G}_{x})}{\sqrt{\mu}}],w^2(0)\partial^\alpha \mathbf{g}\big)+\big(\partial^{\alpha}[u_{1}\frac{\overline{G}_{x}}{\sqrt{\mu}}],w^2(0)\partial^\alpha \mathbf{g}\big)
-\big(\partial^{\alpha}[v\frac{\overline{G}_{t}}{\sqrt{\mu}}],w^2(0)\partial^\alpha \mathbf{g}\big),
\end{align}
where we have used the fact that
$$
(\xi_1\partial^\alpha\mathbf{g}_{x},w^2(0)\partial^\alpha \mathbf{g})=0.
$$
We shall estimate \eqref{3.1} term by term.  Recalling  the weight function $w=w(0)$ in \eqref{1.32} and the fact that $|\alpha|\leq 1$, one has
\begin{align}\label{3.2aaa}
(\partial^{\alpha}(v\mathbf{g}_{t}),w^2(0)\partial^\alpha \mathbf{g})=(v\partial^{\alpha}\mathbf{g}_{t},w^2(0)\partial^\alpha \mathbf{g})
+(\partial^{\alpha}v\mathbf{g}_{t},w^2(0)\partial^\alpha \mathbf{g}).
\end{align}
If $|\alpha|=0$, the last term in the above equality vanishes. If $|\alpha|=1$, we have from   \eqref{2.48aa} and  \eqref{2.48} that
\begin{align}
\label{3.2a}
|(\partial^{\alpha}v\mathbf{g}_{t},w^2(0)\partial^\alpha \mathbf{g})|
&\leq C\epsilon\sum_{|\alpha|=1}\|w(0)\partial^\alpha \mathbf{g}\|_\sigma^{2}
+C_\epsilon q_{3}(t)\|\langle \xi\rangle w(0)\mathbf{g}_{t} \|^{2}
\nonumber\\
&\leq C\epsilon\sum_{|\alpha|=1}\|\partial^{\alpha} \mathbf{g}\|_{\sigma,w}^{2}
+C_{\epsilon}q_{3}(t)\mathcal{F}_{2,l,q}(t).
\end{align}
By the similar arguments as \eqref{3.2a}, one has
\begin{align}
\label{3.2aa}
\sum_{|\alpha|\leq 1}|(v_{t} \partial^\alpha \mathbf{g},w^{2}(0)\partial^\alpha \mathbf{g})|
\leq C\epsilon\sum_{|\alpha|\leq 1}\|\partial^\alpha \mathbf{g}\|_{\sigma,w}^{2}
+C_{\epsilon}q_{3}(t)\mathcal{F}_{2,l,q}(t).
\end{align}
By \eqref{1.32}, for any multi-indices $|\beta|\geq 0$,  we see
\begin{align}
\label{3.3}
\partial_tw^2(\beta)=-2q_{2}q_{3}(t)\langle \xi\rangle^{2} w^2(\beta).
\end{align}
It follows from this and \eqref{3.2aa} that
\begin{align}
\label{3.2}
(v\partial^{\alpha}\mathbf{g}_{t},w^2(0)\partial^\alpha \mathbf{g})
&=\frac{1}{2}\frac{d}{dt}\|v^{\frac{1}{2}}\partial^\alpha \mathbf{g}\|_w^2
-\frac{1}{2}(v\partial^\alpha \mathbf{g} ,[w^{2}(0)]_{t}\partial^\alpha \mathbf{g})
-\frac{1}{2}(v_{t} \partial^\alpha \mathbf{g},w^{2}(0)\partial^\alpha \mathbf{g})
\nonumber\\
&\geq \frac{1}{2}\frac{d}{dt}\|v^{\frac{1}{2}}\partial^\alpha \mathbf{g}\|_w^2
+ q_{2}q_{3}(t)\|v^{\frac{1}{2}}\langle \xi\rangle\partial^\alpha \mathbf{g}\|^2_w\notag\\
&\qquad-C\epsilon\sum_{|\alpha|\leq 1}\|\partial^{\alpha}\mathbf{g}\|_{\sigma,w}^{2}-C_\epsilon q_{3}(t)\mathcal{F}_{2,l,q}(t).
\end{align}
For the first term on the left hand side of \eqref{3.1}, by using  \eqref{3.2aaa}, \eqref{3.2a} and \eqref{3.2}, we arrive at
\begin{align*}
(\partial^{\alpha}(v\mathbf{g}_{t}),w^2(0)\partial^\alpha \mathbf{g})
&\geq \frac{1}{2}\frac{d}{dt}\|v^{\frac{1}{2}}\partial^\alpha \mathbf{g}\|_w^2
+ q_{2}q_{3}(t)\|v^{\frac{1}{2}}\langle \xi\rangle\partial^\alpha \mathbf{g}\|^2_w
\nonumber\\
 &\qquad-C\epsilon\sum_{|\alpha|\leq 1}\|\partial^{\alpha} \mathbf{g}\|^2_{\sigma,w}-C_\epsilon q_{3}(t)\mathcal{F}_{2,l,q}(t).
\end{align*}
 If $|\alpha|=1$, by using the integration by parts, we  see
$$(\partial^{\alpha}(u_{1}\mathbf{g}_{x}),w^2(0)\partial^\alpha \mathbf{g})=(\partial^{\alpha}u_{1}\mathbf{g}_{x},w^2(0)\partial^\alpha \mathbf{g})-\frac{1}{2}(u_{1x}\partial^{\alpha}\mathbf{g} ,w^2(0)\partial^\alpha \mathbf{g}).$$
If $|\alpha|=0$, we only have the last term.  For the second term on the left hand side of
 \eqref{3.1}, by using this, \eqref{2.48aaa} and  \eqref{2.48}, we have
\begin{align*}
\sum_{|\alpha|\leq 1}|(\partial^{\alpha}(u_{1}\mathbf{g}_{x}),w^2(0)\partial^\alpha \mathbf{g})|
\leq C\epsilon\sum_{|\alpha|\leq 1}\|\partial^{\alpha} \mathbf{g}\|^2_{\sigma,w}+C_{\epsilon}q_{3}(t)\mathcal{F}_{2,l,q}(t).
\end{align*}
For the third term on the left hand side of \eqref{3.1}. If $|\alpha|=1$,  we easily see
\begin{align*}
-(\partial^{\alpha}(v\mathcal{L}\mathbf{g}),w^2(0)\partial^\alpha \mathbf{g})
=-(v\mathcal{L}\partial^{\alpha}\mathbf{g},w^2(0)\partial^\alpha \mathbf{g})-(\partial^{\alpha}v\mathcal{L}\mathbf{g},w^2(0)\partial^\alpha \mathbf{g}).
\end{align*}
The last term in the above equality vanishes as $|\alpha|=0$. From \eqref{5.6}, \eqref{1.35} and \eqref{2.12}, it is easily seen that
\begin{align*}
-(v\mathcal{L}\partial^\alpha \mathbf{g},w^2(0)\partial^\alpha \mathbf{g})&\geq  c_{4} \|v^{\frac{1}{2}}\partial^\alpha \mathbf{g}\|^2_{\sigma,w}-C\|v^{\frac{1}{2}}\partial^\alpha \mathbf{g}\|^2_{\sigma}
\nonumber\\
 &\geq \frac{c_{4}}{2}\| \partial^\alpha \mathbf{g}\|^2_{\sigma,w}-C\| \partial^\alpha \mathbf{g}\|^2_{\sigma}.
\end{align*}
Note that $\mathcal{L}\mathbf{g}=\Gamma(\sqrt{\mu},\mathbf{g})+\Gamma(\mathbf{g},\sqrt{\mu})$.  If $|\alpha|=1$, we have from  \eqref{5.8} and  the imbedding inequality that
\begin{align*}
|(\partial^{\alpha}v\mathcal{L}\mathbf{g},w^2(0)\partial^\alpha \mathbf{g})|
\leq C\|\partial^{\alpha}v\|_{L^{\infty}}\|\mathbf{g}\|_{\sigma,w}\|\partial^\alpha \mathbf{g}\|_{\sigma,w}
\leq C(\delta+\varepsilon_{0})(\|\mathbf{g}\|^{2}_{\sigma,w}+\|\partial^\alpha \mathbf{g}\|^{2}_{\sigma,w}).
\end{align*}
We thus deduce from the above   estimates that
\begin{align*}
-(\partial^{\alpha}(v\mathcal{L}\mathbf{g}),w^2(0)\partial^\alpha \mathbf{g})\geq \frac{c_{4}}{2}\|\partial^\alpha \mathbf{g}\|^2_{\sigma,w}-C\|\partial^\alpha \mathbf{g}\|^2_{\sigma}
-C(\delta+\varepsilon_{0})(\|\mathbf{g}\|^{2}_{\sigma,w}+\|\partial^\alpha \mathbf{g}\|^{2}_{\sigma,w}).
\end{align*}
For the first term on the right hand side of \eqref{3.1}, we get from  \eqref{5.29} and \eqref{5.33} that
\begin{align*}
&|\big(\partial^{\alpha}[v\Gamma(\mathbf{g},\frac{M-\mu}{\sqrt{\mu}})]+\partial^{\alpha}[v\Gamma(\frac{M-\mu}{\sqrt{\mu}},\mathbf{g})]
+\partial^{\alpha}[v\Gamma(\frac{G}{\sqrt{\mu}},\frac{G}{\sqrt{\mu}})],w^2(0)\partial^\alpha \mathbf{g}\big)|
\nonumber\\
&\leq C(\eta_0+\delta+\varepsilon_{0})\|\partial^\alpha \mathbf{g}\|_{\sigma,w}^2+C(\eta_0+\delta+\varepsilon_{0})\mathcal{D}_{2,l,q}(t)
+C\delta(1+t)^{-\frac{4}{3}}.
\end{align*}
By using   \eqref{2.35a}, \eqref{1.35}, \eqref{2.9}, \eqref{1.29} and  the imbedding inequality,  we arrive at
\begin{align*}
&|\big(\partial^{\alpha}[\frac{1}{\sqrt{\mu}}P_{1}\xi_{1}M\big\{\frac{|\xi-u|^{2}
\widetilde{\theta}_{x}}{2R\theta^{2}}+\frac{(\xi-u)\cdot\widetilde{u}_{x}}{R\theta}\big\}],w^2(0)\partial^\alpha \mathbf{g}\big)|
\nonumber\\
&\leq C\|\langle \xi\rangle^{\frac{1}{2}}\mu^{-\frac{1}{2}}w(0)\partial^\alpha P_1\xi_1M\Big\{\frac{|\xi-u|^2 \widetilde{\theta}_x}{2R\theta^2}+\frac{(\xi-u)\cdot  \widetilde {u}_{x}}{R\theta}\Big\} \|\|\langle \xi\rangle^{-\frac{1}{2}} w(0)\partial^\alpha \mathbf{g}\|
\nonumber\\
&\leq \epsilon\|\partial^\alpha \mathbf{g}\|^2_{\sigma,w}+
C_\epsilon\|[\partial^\alpha\widetilde{u}_x,\partial^\alpha\widetilde{\theta}_x]\|^2
+C_\epsilon(\delta+\varepsilon_{0})\mathcal{D}_{2,l,q}(t).
\end{align*}
Here we used the fact that $\|w(0)\langle \xi\rangle^b\mu^{-\frac{1}{2}} M\|\leq C$ for any $b\ge 0$ by \eqref{2.9a} and
\eqref{2.12}.

Similar arguments as the above imply
\begin{align*}
|\big(\partial^{\alpha}[\frac{P_{0}(\xi_{1}\sqrt{\mu}\mathbf{g}_{x})}{\sqrt{\mu}}],w^2(0)\partial^\alpha \mathbf{g}\big)|
\leq \epsilon\|  \partial^\alpha \mathbf{g}\|_{\sigma,w}^2+C_\epsilon\| \partial^\alpha \mathbf{g}_x\|_{\sigma}^2
+C_\epsilon(\delta+\varepsilon_{0})\mathcal{D}_{2,l,q}(t).
\end{align*}
By  \eqref{5.38}, \eqref{1.35}, \eqref{1.29} and \eqref{2.9}, one has
\begin{align*}
&|\big(\partial^{\alpha}[\frac{P_{1}(\xi_{1}\overline{G}_{x})}{\sqrt{\mu}}],w^2(0)\partial^\alpha \mathbf{g}\big)|+|\big(\partial^{\alpha}[u_{1}\frac{\overline{G}_{x}}{\sqrt{\mu}}],w^2(0)\partial^\alpha \mathbf{g}\big)|
+|\big(\partial^{\alpha}[v\frac{\overline{G}_{t}}{\sqrt{\mu}}],w^2(0)\partial^\alpha \mathbf{g}\big)|
\nonumber\\
&\leq C\epsilon\|\langle \xi\rangle^{-\frac{1}{2}}\partial^\alpha \mathbf{g}\|_{w}^2+C_\epsilon\big\{\|\langle \xi\rangle^{\frac{1}{2}}\partial^{\alpha}[\frac{P_{1}(\xi_{1}\overline{G}_{x})}{\sqrt{\mu}}]\|_{w}^{2}
+\|\langle \xi\rangle^{\frac{1}{2}}\partial^{\alpha}[u_{1}\frac{\overline{G}_{x}}{\sqrt{\mu}}]\|_{w}^{2}
+\|\langle \xi\rangle^{\frac{1}{2}}\partial^{\alpha}[v\frac{\overline{G}_{t}}{\sqrt{\mu}}]\|_{w}^{2}\big\}
\nonumber\\
&\leq C\epsilon\|\partial^\alpha \mathbf{g}\|^2_{\sigma,w}+C_\epsilon(\delta+\varepsilon_{0})\mathcal{D}_{2,l,q}(t)
+C_{\epsilon}\delta(1+t)^{-\frac{4}{3}}.
\end{align*}
By choosing a small $\epsilon>0$, we thus have from \eqref{3.1}  and the above estimates   that
\begin{eqnarray}
\label{3.4}
\sum_{|\alpha|\leq 1}&&\Big\{\frac{1}{2}\frac{d}{dt}\|v^{\frac{1}{2}}\partial^\alpha \mathbf{g}\|_w^2
+ q_{2}q_{3}(t)\|v^{\frac{1}{2}}\langle \xi\rangle\partial^\alpha \mathbf{g}\|^2_w
+c\|\partial^\alpha \mathbf{g}\|^2_{\sigma,w}\Big\}
\notag\\
\leq&& C\sum_{|\alpha|\leq 1}(\|\partial^\alpha \mathbf{g}\|^2_{\sigma}+\|\partial^\alpha \mathbf{g}_x\|^2_{\sigma}
+\|[\partial^\alpha\widetilde{u}_x,\partial^\alpha\widetilde{\theta}_x]\|^2)
\notag\\
&&\hspace{3cm}+C( \eta_0+\delta+\varepsilon_{0})\mathcal{D}_{2,l,q}(t)+C\delta(1+t)^{-\frac{4}{3}}+Cq_{3}(t)\mathcal{F}_{2,l,q}(t).
\end{eqnarray}
This completes the proof of the low order weighted estimates of the microscopic component $\mathbf{g}$. Then we
will deduce the high order weighted estimates of the microscopic component $\mathbf{g}$.

We now consider the estimates for the microscopic component $\partial^\alpha \mathbf{g}$ with the weight $w=w(0)$ and $|\alpha|=2$.
Applying $\partial^{\alpha}$ to \eqref{2.50} and then taking the inner product with
$w^{2}(0)\frac{\partial^\alpha F}{\sqrt{\mu}}$ over $\mathbb{R}\times{\mathbb R}^3$, one has
\begin{align}
\label{3.5}
&\big(v(\frac{\partial^{\alpha}F}{\sqrt{\mu}})_{t},w^{2}(0)\frac{\partial^{\alpha}F}{\sqrt{\mu}}\big)
-\big(u_{1}(\frac{\partial^{\alpha}F}{\sqrt{\mu}})_{x},w^{2}(0)\frac{\partial^{\alpha}F}{\sqrt{\mu}}\big)
+\big(\xi_{1}(\frac{\partial^{\alpha}F}{\sqrt{\mu}})_{x},w^{2}(0)\frac{\partial^{\alpha}F}{\sqrt{\mu}}\big)
\nonumber\\
&\qquad+\sum_{1\leq|\alpha_{1}|\leq|\alpha|}\big(C^{\alpha_{1}}_{\alpha}\big[\partial^{\alpha_{1}}v\frac{\partial^{\alpha-\alpha_{1}}F_{t}}{\sqrt{\mu}}
-\partial^{\alpha_{1}}u_{1}\frac{\partial^{\alpha-\alpha_{1}}F_{x}}{\sqrt{\mu}}\big],w^{2}(0)\frac{\partial^{\alpha}F}{\sqrt{\mu}}\big)
\nonumber\\
&=\big(\partial^{\alpha}(v\mathcal{L}\mathbf{g}),w^{2}(0)\frac{\partial^{\alpha}F}{\sqrt{\mu}}\big)
+\big(\frac{1}{\sqrt{\mu}}\partial^{\alpha}P_1\xi_1M\big\{\frac{|\xi-u|^2\overline{\theta}_x}{2R\theta^2}
+\frac{(\xi-u)\cdot \overline{u}_{x}}{R\theta}\big\},w^{2}(0)\frac{\partial^{\alpha}F}{\sqrt{\mu}}\big)
\nonumber\\
&\qquad+\big(\partial^{\alpha}[v\Gamma(\mathbf{g},\frac{M-\mu}{\sqrt{\mu}})]+
\partial^{\alpha}[v\Gamma(\frac{M-\mu}{\sqrt{\mu}},\mathbf{g})]
+\partial^{\alpha}[v\Gamma(\frac{G}{\sqrt{\mu}},\frac{G}{\sqrt{\mu}})],w^{2}(0)\frac{\partial^{\alpha}F}{\sqrt{\mu}}\big).
\end{align}
We will estimate \eqref{3.5} term by term. By \eqref{3.3}, one has
\begin{align*}
(v(\frac{\partial^{\alpha}F}{\sqrt{\mu}})_{t},w^{2}(0)\frac{\partial^{\alpha}F}{\sqrt{\mu}})
=\frac{1}{2}\frac{d}{dt}\|v^{\frac{1}{2}}\frac{\partial^{\alpha}F}{\sqrt{\mu}}\|_{w}^{2}
-\frac{1}{2}(v_{t}\frac{\partial^{\alpha}F}{\sqrt{\mu}},w^{2}(0)\frac{\partial^{\alpha}F}{\sqrt{\mu}})
+q_{2}q_{3}(t)\|v^{\frac{1}{2}}\langle \xi\rangle\frac{\partial^{\alpha}F}{\sqrt{\mu}}\|_{w}^{2}.
\end{align*}
Note that the second term on the right hand side of the above equality together with the second term
of \eqref{3.5} can be cancelled by the fact that $v_{t}=u_{1x}$, namely
$$
-\frac{1}{2}(v_{t}\frac{\partial^{\alpha}F}{\sqrt{\mu}},w^{2}(0)\frac{\partial^{\alpha}F}{\sqrt{\mu}})
-\big(u_{1}(\frac{\partial^{\alpha}F}{\sqrt{\mu}})_{x},w^{2}(0)\frac{\partial^{\alpha}F}{\sqrt{\mu}}\big)=0.
$$
By using \eqref{2.9} and the fact  that $F=M+\overline{G}+\sqrt{\mu}\mathbf{g}$,
we have from \eqref{1.29}, \eqref{5.38}, \eqref{2.12} and the imbedding inequality that
\begin{align*}
\|v^{\frac{1}{2}}\langle \xi\rangle w(0)\frac{\partial^\alpha F}{\sqrt{\mu}}\|^2
&\geq \frac{1}{2}\|v^{\frac{1}{2}}\langle \xi\rangle w(0)\partial^\alpha \mathbf{g}\|^2-C\|v^{\frac{1}{2}}\langle \xi\rangle w(0)\frac{\partial^\alpha M}{\sqrt{\mu}}\|^2
-C\|v^{\frac{1}{2}}\langle \xi\rangle w(0)\frac{\partial^\alpha \overline{G}}{\sqrt{\mu}}\|^2
\notag\\
&\geq \frac{1}{2}\|v^{\frac{1}{2}}\langle \xi\rangle\partial^\alpha \mathbf{g}\|_{w}^2-C\|\partial^\alpha[\widetilde{v},\widetilde{u},\widetilde{\theta}]\|^2-C\delta(1+t)^{-\frac{4}{3}}
-C(\delta+\varepsilon_{0})\mathcal{D}_{2,l,q}(t).
\end{align*}
Here we also used the fact that $\|w(0)\langle \xi\rangle^b\mu^{-\frac{1}{2}} M^{1-\varepsilon}\|\leq C$
for any $b\ge 0$ and $\varepsilon$ small enough by \eqref{2.9a} and  \eqref{2.12}.
It follows from the above estimates and \eqref{2.9a} that
\begin{multline*}
(v(\frac{\partial^{\alpha}F}{\sqrt{\mu}})_{t},w^{2}(0)\frac{\partial^{\alpha}F}{\sqrt{\mu}})
-\big(u_{1}(\frac{\partial^{\alpha}F}{\sqrt{\mu}})_{x},w^{2}(0)\frac{\partial^{\alpha}F}{\sqrt{\mu}}\big)
\\
\geq \frac{1}{2}\frac{d}{dt}\|v^{\frac{1}{2}}\frac{\partial^{\alpha}F}{\sqrt{\mu}}\|_{w}^{2}
+\frac{1}{2}q_{2}q_{3}(t)\|v^{\frac{1}{2}}\langle \xi\rangle\partial^\alpha \mathbf{g}\|_{w}^2
\\
-C\|\partial^\alpha[\widetilde{v},\widetilde{u},\widetilde{\theta}]\|^2-C(\delta+\varepsilon_{0})\mathcal{D}_{2,l,q}(t)-C\delta(1+t)^{-\frac{4}{3}}.
\end{multline*}
Similar arguments as \eqref{2.54a} and \eqref{2.54} imply
\begin{align*}
&\sum_{1\leq|\alpha_{1}|\leq|\alpha|}C^{\alpha_{1}}_{\alpha}|\big(\big[\partial^{\alpha_{1}}v\frac{\partial^{\alpha-\alpha_{1}}F_{t}}{\sqrt{\mu}}
-\partial^{\alpha_{1}}u_{1}\frac{\partial^{\alpha-\alpha_{1}}F_{x}}{\sqrt{\mu}}\big],w^{2}(0)\frac{\partial^{\alpha}F}{\sqrt{\mu}}\big)|
\nonumber\\
&\leq  C\epsilon\sum_{|\alpha|=2}\|\partial^{\alpha}\mathbf{g}\|_{\sigma,w}^{2}+C(\delta+\varepsilon_{0})\mathcal{D}_{2,l,q}(t)
+C\delta(1+t)^{-\frac{4}{3}}+C_{\epsilon}q_{3}(t)\mathcal{F}_{2,l,q}(t).
\end{align*}
For the first term on the right hand side of \eqref{3.5}, one has
\begin{equation}
\label{3.6aa}
\big(\partial^{\alpha}(v\mathcal{L}\mathbf{g}),w^{2}(0)\frac{\partial^{\alpha}F}{\sqrt{\mu}}\big)=
\big(v\mathcal{L}\partial^{\alpha}\mathbf{g},w^{2}(0)\frac{\partial^{\alpha}F}{\sqrt{\mu}}\big)
+\sum_{1\leq|\alpha_{1}|\leq|\alpha|}C^{\alpha_{1}}_{\alpha}
\big(\partial^{\alpha_{1}}v\mathcal{L}\partial^{\alpha-\alpha_{1}}\mathbf{g},w^{2}(0)\frac{\partial^{\alpha}F}{\sqrt{\mu}}\big).
\end{equation}
Recalling that $F=M+\overline{G}+\sqrt{\mu}\mathbf{g}$ and $\mathcal{L}\mathbf{g}=\Gamma(\mathbf{g},\sqrt{\mu})+\Gamma(\sqrt{\mu},\mathbf{g})$,
we have from \eqref{5.8} and \eqref{2.12}  that
\begin{align*}
(v\mathcal{L} \partial^\alpha \mathbf{g},w^2(0)\frac{ \partial^\alpha M}{\sqrt{\mu}})&\leq C\|v^{\frac{1}{2}}w(0)\partial^\alpha \mathbf{g}\|_\sigma\|v^{\frac{1}{2}}w(0)\frac{ \partial^\alpha M }{\sqrt{\mu}}\|_\sigma\\
&\leq \epsilon\|\partial^\alpha \mathbf{g}\|_{\sigma,w}^2
+C_{\epsilon}\|\partial^\alpha[\widetilde{v},\widetilde{u},\widetilde{\theta}]\|^2
+C_{\epsilon}(\delta+\varepsilon_{0})\mathcal{D}_{2,l,q}(t)
+C_{\epsilon}\delta(1+t)^{-\frac{4}{3}}.
\end{align*}
By \eqref{5.8}, \eqref{5.38}, \eqref{2.9} and \eqref{1.29}, we have
\begin{align*}
(v\mathcal{L} \partial^\alpha \mathbf{g},w^2(0)\frac{\partial^\alpha \overline{G}}{\sqrt{\mu}})
&\leq  C\|v^{\frac{1}{2}}w(0)\partial^\alpha \mathbf{g}\|_{\sigma}\|v^{\frac{1}{2}}w(0)\frac{\partial^\alpha \overline{G}}{\sqrt{\mu}}\|_\sigma
\\
&\leq \epsilon\|\partial^\alpha \mathbf{g}\|_{\sigma,w}^2+C_{\epsilon}(\delta+\varepsilon_{0})\mathcal{D}_{2,l,q}(t)+C_{\epsilon}\delta(1+t)^{-\frac{4}{3}}.
\end{align*}
From \eqref{5.6}, \eqref{1.35} and \eqref{2.12}, it is easily seen that
$$
-(v\mathcal{L} \partial^\alpha \mathbf{g}, w^2(0)\partial^\alpha \mathbf{g})\geq \frac{c_{4}}{2}\|\partial^\alpha \mathbf{g}\|_{\sigma,w}^2-C\|\partial^\alpha \mathbf{g}\|_{\sigma }^2.
$$
For $|\alpha|=2$, it holds that
\begin{multline}
\label{3.6}
\|w (0)\frac{\partial^\alpha F}{\sqrt{\mu}}\|^2_\sigma
\leq C\|\partial^\alpha \mathbf{g}\|_{\sigma,w}^2+C\|\frac{\partial^\alpha \overline{G}}{\sqrt{\mu}}\|^2_{\sigma,w}
+C\|\frac{\partial^\alpha M}{\sqrt{\mu}}\|^2_{\sigma,w}\\
\leq C(\|\partial^\alpha \mathbf{g}\|_{\sigma,w}^2+\|\partial^\alpha[\widetilde{v},\widetilde{u},\widetilde{\theta}]\|^2)
+C(\delta+\varepsilon_{0})\mathcal{D}_{2,l,q}(t)+C\delta(1+t)^{-\frac{4}{3}}.
\end{multline}
By this, \eqref{5.8} and the  imbedding inequality, one has
\begin{align*}
\sum_{1\leq|\alpha_{1}|\leq|\alpha|}C^{\alpha_{1}}_{\alpha}\big(\partial^{\alpha_{1}}v\mathcal{L}\partial^{\alpha-\alpha_{1}}\mathbf{g}
,w^{2}(0)\frac{\partial^{\alpha}F}{\sqrt{\mu}}\big)
&\leq C\sum_{1\leq|\alpha_{1}|\leq|\alpha|}\int_{\mathbb{R}}|\partial^{\alpha_{1}}v||w(0)\partial^{\alpha-\alpha_{1}}\mathbf{g}|_{\sigma}
|w(0)\frac{\partial^{\alpha}F}{\sqrt{\mu}}|_{\sigma}\,dx
\\
&\leq C(\delta+\varepsilon_{0})\mathcal{D}_{2,l,q}(t)+C\delta(1+t)^{-\frac{4}{3}}.
\end{align*}
It follows from \eqref{3.6aa} and the above   estimates that
\begin{align*}
\big(\partial^{\alpha}(v\mathcal{L}\mathbf{g}),w^{2}(0)\frac{\partial^{\alpha}F}{\sqrt{\mu}}\big)\leq&
-\frac{c_{4}}{2}\|\partial^\alpha \mathbf{g}\|_{\sigma,w}^2+C\epsilon\|\partial^\alpha \mathbf{g}\|_{\sigma,w}^2
+C\|\partial^\alpha \mathbf{g}\|_{\sigma}^2+C_\epsilon\|\partial^\alpha[\widetilde{v},\widetilde{u},\widetilde{\theta}]\|^2
\notag\\
&\hspace{3cm}
+C_\epsilon(\delta+\varepsilon_{0})\mathcal{D}_{2,l,q}(t)+C_\epsilon\delta(1+t)^{-\frac{4}{3}}.
\end{align*}
By using \eqref{1.35} and \eqref{3.6}, one has
\begin{align*}
&|\big(\frac{1}{\sqrt{\mu}}\partial^{\alpha}\big[P_1\xi_1M\big\{\frac{|\xi-u|^2\overline{\theta}_x}{2R\theta^2}
+\frac{(\xi-u)\cdot \overline{u}_{x}}{R\theta}\big\}\big],w^{2}(0)\frac{\partial^{\alpha}F}{\sqrt{\mu}}\big)|
\\
&\leq \epsilon\|\langle \xi\rangle^{-\frac{1}{2}}w (0)\frac{\partial^\alpha F}{\sqrt{\mu}}\|^2
+C_\epsilon\|\langle \xi\rangle^{\frac{1}{2}}w (0)\frac{1}{\sqrt{\mu}}\partial^{\alpha}\big[P_1\xi_1M\big\{\frac{|\xi-u|^2\overline{\theta}_x}{2R\theta^2}
+\frac{(\xi-u)\cdot \overline{u}_{x}}{R\theta}\big\}\big]\|^2
\\
&\leq C\epsilon(\|\partial^\alpha \mathbf{g}\|_{\sigma,w}^2+\|\partial^\alpha[\widetilde{v},\widetilde{u},\widetilde{\theta}]\|^2)
+C_{\epsilon}(\delta+\varepsilon_{0})\mathcal{D}_{2,l,q}(t)+C_\epsilon\delta(1+t)^{-\frac{4}{3}}.
\end{align*}
Here we used the fact that $\|w(0)\langle \xi\rangle^b\mu^{-\frac{1}{2}} M^{1-\varepsilon}\|\leq C$
for any $b\ge 0$ and $\varepsilon$ small enough by \eqref{2.9a} and  \eqref{2.12}.

By using \eqref{5.29}, \eqref{5.33} and    \eqref{3.6},  we get
\begin{align*}
&|\big(\partial^{\alpha}[v\Gamma(\mathbf{g},\frac{M-\mu}{\sqrt{\mu}})]+
\partial^{\alpha}[v\Gamma(\frac{M-\mu}{\sqrt{\mu}},\mathbf{g})]
+\partial^{\alpha}[v\Gamma(\frac{G}{\sqrt{\mu}},\frac{G}{\sqrt{\mu}})],w^{2}(0)\frac{\partial^{\alpha}F}{\sqrt{\mu}}\big)|
\\
&\leq C(\eta_0+\delta+\varepsilon_{0})(\|\partial^\alpha \mathbf{g}\|_{\sigma,w}^2+\|\partial^\alpha[\widetilde{v},\widetilde{u},\widetilde{\theta}]\|^2)
+C(\eta_0+\delta+\varepsilon_{0})\mathcal{D}_{2,l,q}(t)
+C\delta(1+t)^{-\frac{4}{3}}.
\end{align*}
Substituting the above   estimates into \eqref{3.5} and taking a small $\epsilon>0$, we get
\begin{align}
\label{3.7}
&\sum_{|\alpha|=2}\big\{\frac{1}{2}\frac{d}{dt}\|v^{\frac{1}{2}}\frac{\partial^\alpha F}{\sqrt{\mu}}\|_w^2
+\frac{1}{2}q_{2}q_{3}(t)\|v^{\frac{1}{2}}\langle \xi\rangle\partial^\alpha \mathbf{g}\|_{w}^2+c\| \partial^\alpha \mathbf{g}\|_{\sigma,w}^2\big\}
\nonumber\\
&\leq C\sum_{|\alpha|=2}(\| \partial^\alpha \mathbf{g}\|_{\sigma}^2+\|\partial^\alpha[\widetilde{v},\widetilde{u},\widetilde{\theta}]\|^2)\notag\\
&\qquad+C(\eta_{0}+\delta+\varepsilon_{0})\mathcal{D}_{2,l,q}(t)+C\delta(1+t)^{-\frac{4}{3}}+Cq_{3}(t)\mathcal{F}_{2,l,q}(t).
\end{align}
Taking the summation of \eqref{3.4} and  \eqref{3.7}, we arrive at
\begin{align}
\label{3.8}
&\frac{d}{dt}\big\{\sum_{|\alpha|\leq 1}\|v^{\frac{1}{2}}\partial^\alpha \mathbf{g}\|_w^2
+\sum_{|\alpha|=2}\|v^{\frac{1}{2}}\frac{\partial^\alpha F}{\sqrt{\mu}}\|_w^2\big\}
+c\sum_{|\alpha|\leq 2}\big\{q_{2}q_{3}(t)\|v^{\frac{1}{2}}\langle \xi\rangle\partial^\alpha \mathbf{g}\|_{w}^2+ \|\partial^\alpha \mathbf{g}\|^2_{\sigma,w}\big\}
\nonumber\\
& \leq C\sum_{|\alpha|\leq 2}\|\partial^\alpha \mathbf{g}\|^2_{\sigma}
+C\sum_{1\leq|\alpha|\leq 2}\|\partial^{\alpha}[\widetilde{v},\widetilde{u},\widetilde{\theta}]\|^2\notag\\
&\qquad+C(\eta_{0}+\delta+\varepsilon_{0})\mathcal{D}_{2,l,q}(t)
+C\delta(1+t)^{-\frac{4}{3}}+Cq_{3}(t)\mathcal{F}_{2,l,q}(t).
\end{align}
This completes the proof of the weighted time-spatial energy estimates of the microscopic component $\mathbf{g}$.\qed

\subsection{Time-spatial-velocity derivative estimates}
In this subsection, we will deduce the weighted mixed derivative estimates of the microscopic component $\mathbf{g}$.
 Applying $\partial^\alpha_\beta$ to \eqref{2.4} with $|\alpha|+|\beta|\leq2$ and $|\beta|\geq1$, then we
take the inner product of the resulting equation with $w^2(\beta)\partial_{\beta}^\alpha \mathbf{g}$ over $\mathbb{R}\times{\mathbb R}^3$ to get
\begin{align}
\label{3.9}
&\big(\partial^\alpha_\beta (v\mathbf{g}_{t}),w^2(\beta)\partial_{\beta}^\alpha \mathbf{g}\big)+\big(\xi_{1}\partial^\alpha_\beta \mathbf{g}_{x},w^2(\beta)\partial_{\beta}^\alpha \mathbf{g}\big)+\big(C_\beta^{\beta-e_1}\partial^\alpha_{\beta-e_1}\mathbf{g}_{x},w^2(\beta)\partial_{\beta}^\alpha \mathbf{g}\big)\notag\\
&\qquad-\big(\partial^\alpha_\beta(u_{1}\mathbf{g}_{x}),w^2(\beta)\partial_{\beta}^\alpha \mathbf{g}\big)-\big(\partial^\alpha_\beta(v\mathcal{L}\mathbf{g}),w^2(\beta)\partial_{\beta}^\alpha \mathbf{g}\big)\notag\\
&=\big(\partial^\alpha_\beta[v\Gamma(\mathbf{g},\frac{M-\mu}{\sqrt{\mu}})]+
\partial^\alpha_\beta[v\Gamma(\frac{M-\mu}{\sqrt{\mu}},\mathbf{g})]+\partial^\alpha_\beta[v\Gamma(\frac{G}{\sqrt{\mu}},\frac{G}{\sqrt{\mu}})]
,w^2(\beta)\partial_{\beta}^\alpha \mathbf{g}\big)
\nonumber\\
&\qquad+\big(\partial^\alpha_\beta\big[\frac{P_{0}(\xi_{1}\sqrt{\mu}\mathbf{g}_{x})}{\sqrt{\mu}}\big]
-\partial^\alpha_\beta\big[\frac{1}{\sqrt{\mu}}P_{1}\xi_{1}M\big\{\frac{|\xi-u|^{2}
\widetilde{\theta}_{x}}{2R\theta^{2}}+\frac{(\xi-u)\cdot\widetilde{u}_{x}}{R\theta}\big\}\big]
,w^2(\beta)\partial_{\beta}^\alpha \mathbf{g}\big)
\nonumber\\
&\qquad+\big(-\partial^\alpha_\beta[\frac{P_{1}(\xi_{1}\overline{G}_{x})}{\sqrt{\mu}}]+\partial^\alpha_\beta[u_{1}\frac{\overline{G}_{x}}{\sqrt{\mu}}]
-\partial^\alpha_\beta[v\frac{\overline{G}_{t}}{\sqrt{\mu}}],w^2(\beta)\partial_{\beta}^\alpha \mathbf{g}\big),
\end{align}
where $e_1=(1,0,0)$. We shall estimate each term in \eqref{3.9}.
Since $|\alpha|+|\beta|\leq2$ and $|\beta|\geq1$, then $|\alpha|\leq1$.
If $|\alpha|=1$, it can be seen that
$$
\partial^\alpha_\beta (v\mathbf{g}_{t})=v\partial^\alpha_\beta \mathbf{g}_{t}+\partial^{\alpha}v\partial_\beta \mathbf{g}_{t}.
$$
If $|\alpha|=0$, the last term in the above equality vanishes.  In view of \eqref{2.48aa} and \eqref{2.48},  we deduce
\begin{align}
\label{3.14a}
|(\partial^{\alpha}v\partial_\beta \mathbf{g}_{t},w^2(\beta)\partial_{\beta}^\alpha \mathbf{g})|
\leq C\epsilon(\|\partial_{\beta}^{\alpha}\mathbf{g}\|_{\sigma,w}^{2}+\sum_{|\beta|=1}\|\partial_{\beta}\mathbf{g}_{t}\|_{\sigma,w}^{2})+C_{\epsilon}q_{3}(t)\mathcal{F}_{2,l,q}(t).
\end{align}
By the similar arguments as \eqref{3.14a}, one has
\begin{align*}
|(v_{t} \partial_{\beta}^\alpha \mathbf{g},w^2(\beta)\partial_{\beta}^\alpha \mathbf{g})|
\leq C\epsilon \|\partial_{\beta}^{\alpha}\mathbf{g}\|_{\sigma,w}^{2} +C_{\epsilon}q_{3}(t)\mathcal{F}_{2,l,q}(t).
\end{align*}
By using  this and \eqref{3.3}, we have
\begin{multline}
\label{3.15a}
 (v\partial^\alpha_\beta \mathbf{g}_{t},w^2(\beta)\partial_{\beta}^\alpha \mathbf{g})
=\frac{1}{2}\frac{d}{dt}\|v^{\frac{1}{2}}\partial_{\beta}^\alpha \mathbf{g}\|_w^2
-\frac{1}{2}(v\partial_{\beta}^\alpha \mathbf{g} ,[w^{2}(\beta)]_{t}\partial_{\beta}^\alpha \mathbf{g})
-\frac{1}{2}(v_{t} \partial_{\beta}^\alpha \mathbf{g},w^2(\beta)\partial_{\beta}^\alpha \mathbf{g})\\
\geq\frac{1}{2}\frac{d}{dt}\|v^{\frac{1}{2}}\partial_{\beta}^\alpha \mathbf{g}\|_w^2
+q_{2}q_{3}(t)\|v^{\frac{1}{2}}\langle \xi\rangle\partial_{\beta}^\alpha \mathbf{g}\|^2_w
-C\epsilon\|\partial_{\beta}^\alpha \mathbf{g}\|^2_{\sigma,w}-C_{\epsilon}q_{3}(t)\mathcal{F}_{2,l,q}(t).
\end{multline}
For the first term on the left hand side of \eqref{3.9}, by  \eqref{3.14a} and \eqref{3.15a}, we arrive at
\begin{align*}
\big(\partial^\alpha_\beta (v\mathbf{g}_{t}),w^2(\beta)\partial_{\beta}^\alpha \mathbf{g}\big)
&\geq \frac{1}{2}\frac{d}{dt}\|v^{\frac{1}{2}}\partial_{\beta}^\alpha \mathbf{g}\|_w^2
+q_{2}q_{3}(t)\|v^{\frac{1}{2}}\langle \xi\rangle\partial_{\beta}^\alpha \mathbf{g}\|^2_w
-C\epsilon\|\partial_{\beta}^\alpha \mathbf{g}\|^2_{\sigma,w}
\nonumber\\
&\qquad- C\epsilon\sum_{|\beta|=1}\|\partial_{\beta}\mathbf{g}_{t}\|_{\sigma,w}^{2}-C_{\epsilon}q_{3}(t)\mathcal{F}_{2,l,q}(t).
\end{align*}
By integration by parts, we  see
$$
\big(\xi_{1}\partial^\alpha_\beta \mathbf{g}_{x},w^2(\beta)\partial_{\beta}^\alpha \mathbf{g}\big)=0.
$$
For the third term on the left hand side of \eqref{3.9}, by  the H\"{o}lder inequality and \eqref{1.35}, one has
\begin{eqnarray*}
|\big(\partial^\alpha_{\beta-e_1}\mathbf{g}_{x},w^2(\beta)\partial^\alpha_\beta \mathbf{g}\big)|
&&\leq \epsilon\|\langle \xi\rangle^{-\frac{1}{2}}w(\beta)\partial^\alpha_{\beta}\mathbf{g}\|^2+C_\epsilon\|\langle \xi\rangle^{-\frac{1}{2}}\langle \xi\rangle w(\beta) \partial^\alpha_{\beta-e_1}\mathbf{g}_{x}\|^2
\notag\\
&&\leq\epsilon\|\langle \xi\rangle^{-\frac{1}{2}}w(\beta)\partial^\alpha_{\beta}\mathbf{g}\|^2+C_\epsilon\|\langle \xi\rangle^{-\frac{1}{2}}w(\beta-e_1) \partial^\alpha_{\beta-e_1}\mathbf{g}_{x}\|^2
\notag\\
&&\leq C\epsilon\|\partial^\alpha_{\beta}\mathbf{g}\|^2_{\sigma,w}+C_\epsilon\|\partial^\alpha_{\beta-e_1}\mathbf{g}_{x}\|^2_{\sigma,w(\beta-e_1)}.
\end{eqnarray*}
Here   we have used  the fact that $\langle \xi\rangle\langle \xi\rangle^{(l-|\beta|)}=\langle \xi\rangle^{(l-|\beta-e_1|)}$.

Since  $|\alpha|+|\beta|\leq 2$ and $|\beta|\geq 1$, then $|\alpha|\leq 1$ and $1\leq |\beta|\leq 2$. For any $|\alpha|=1$, one has
$$\partial^\alpha_\beta(u_{1}\mathbf{g}_{x})=u_{1}\partial^\alpha_\beta\mathbf{g}_{x}+ \partial^{\alpha }u_{1}\partial_\beta \mathbf{g}_{x}.$$
If $|\alpha|=0$, the last term in the above equality vanishes. In view of \eqref{2.48aaa} and \eqref{2.48},  we deduce
$$|(u_{1}\partial^\alpha_\beta\mathbf{g}_{x},w^2(\beta)\partial_{\beta}^\alpha \mathbf{g})|=\frac{1}{2}|(u_{1x}\partial^\alpha_\beta\mathbf{g},w^2(\beta)\partial_{\beta}^\alpha \mathbf{g})|\leq C\epsilon\|\partial^\alpha_\beta\mathbf{g}\|_{\sigma,w}^{2}+C_{\epsilon}q_{3}(t)\mathcal{F}_{2,l,q}(t).$$
Similarly we also have
$$|(\partial^{\alpha }u_{1}\partial_\beta \mathbf{g}_{x},w^2(\beta)\partial_{\beta}^\alpha \mathbf{g})| \leq C\epsilon(\|\partial^\alpha_\beta\mathbf{g}\|_{\sigma,w}^{2}+\sum_{|\beta|=1}\|\partial_\beta\mathbf{g}_x\|_{\sigma,w}^{2})+C_{\epsilon}q_{3}(t)\mathcal{F}_{2,l,q}(t).$$
For the fourth term on the left hand side of \eqref{3.9}, we get
\begin{align*}
|\big(\partial^\alpha_\beta(u_{1}\mathbf{g}_{x}),w^2(\beta)\partial_{\beta}^\alpha \mathbf{g}\big)|
\leq C\epsilon(\|\partial^\alpha_\beta\mathbf{g}\|_{\sigma,w}^{2}+\sum_{|\beta|=1}\|\partial_\beta\mathbf{g}_x\|_{\sigma,w}^{2})+C_{\epsilon}q_{3}(t)\mathcal{F}_{2,l,q}(t).
\end{align*}
For any $|\alpha|=1$, one has
$$\partial^\alpha_\beta(v\mathcal{L}\mathbf{g})=v\partial^\alpha_\beta\mathcal{L}\mathbf{g}+ \partial^{\alpha }v\partial_\beta \mathcal{L}\mathbf{g}.$$
If $|\alpha|=0$, the last term in the above equality vanishes. In view of \eqref{5.5} and \eqref{2.12},  we deduce
\begin{eqnarray*}
-(v\partial^\alpha_\beta\mathcal{L}\mathbf{g},w^2(\beta)\partial_{\beta}^\alpha \mathbf{g})
&&=-(\partial_\beta\mathcal{L}(v^{\frac{1}{2}}\partial^\alpha\mathbf{g}),w^2(\beta)\partial_\beta (v^{\frac{1}{2}}\partial^\alpha\mathbf{g}))\notag\\
&&
\geq \|v^{\frac{1}{2}}\partial^\alpha_\beta \mathbf{g}\|^2_{\sigma,w}-\epsilon\sum_{|\beta_1|=|\beta|}\|v^{\frac{1}{2}}\partial^\alpha_{\beta_1} \mathbf{g}\|_{\sigma,w(\beta_1)}^2
-C_\epsilon\sum_{|\beta_1|<|\beta|}\|v^{\frac{1}{2}}\partial^\alpha_{\beta_1}\mathbf{g}\|_{\sigma,w(\beta_1)}^2
\notag\\
&&\geq c\| \partial^\alpha_\beta \mathbf{g}\|^2_{\sigma,w}-C\epsilon\sum_{|\beta_1|=|\beta|}\| \partial^\alpha_{\beta_1} \mathbf{g}\|_{\sigma,w(\beta_1)}^2
-C_\epsilon\sum_{|\beta_1|<|\beta|}\| \partial^\alpha_{\beta_1}\mathbf{g}\|_{\sigma,w(\beta_1)}^2.
\end{eqnarray*}
Recalling that $\mathcal{L}\mathbf{g}=\Gamma(\sqrt{\mu},\mathbf{g})+\Gamma(\mathbf{g},\sqrt{\mu})$,  by using \eqref{5.8}, \eqref{1.35} and the imbedding inequality, one has
\begin{eqnarray*}
&&|(\partial^{\alpha }v\partial_\beta \mathcal{L}\mathbf{g},w^2(\beta)\partial_{\beta}^\alpha \mathbf{g})|
\leq \|\partial^{\alpha }v\|_{L^\infty_x}\sum_{|\beta'|\leq |\beta|} \|\partial_{\beta'} \mathbf{g}\|_{\sigma,w}\| \partial^\alpha_\beta \mathbf{g}\|_{\sigma,w}\leq C(\delta+\varepsilon_{0})\mathcal{D}_{2,l,q}(t).
\end{eqnarray*}
For the fifth term on the left hand side of \eqref{3.9}, we arrive at
\begin{align*}
-\big(\partial^\alpha_\beta(v\mathcal{L}\mathbf{g}),w^2(\beta)\partial^\alpha_\beta \mathbf{g}\big)
&\geq c\|\partial^\alpha_\beta \mathbf{g}\|^2_{\sigma,w}-C\epsilon\sum_{|\beta_1|=|\beta|}\|\partial^\alpha_{\beta_1} \mathbf{g}\|_{\sigma,w(\beta_1)}^2
\\
&\qquad-C_\epsilon\sum_{|\beta_1|<|\beta|}\|\partial^\alpha_{\beta_1}\mathbf{g}\|_{\sigma,w(\beta_1)}^2-
C(\delta+\varepsilon_{0})\mathcal{D}_{2,l,q}(t).
\end{align*}
By using  \eqref{5.29} and \eqref{5.33}, we get
\begin{align*}
&|\big(\partial^\alpha_\beta[v\Gamma(\mathbf{g},\frac{M-\mu}{\sqrt{\mu}})]+
\partial^\alpha_\beta[v\Gamma(\frac{M-\mu}{\sqrt{\mu}},\mathbf{g})]+\partial^\alpha_\beta[v\Gamma(\frac{G}{\sqrt{\mu}},\frac{G}{\sqrt{\mu}})]
,w^2(\beta)\partial_{\beta}^\alpha \mathbf{g}\big)|
\\
&\leq C(\eta_0+\delta+\varepsilon_{0})\|\partial_{\beta}^\alpha \mathbf{g}\|^2_{\sigma,w}
+C(\eta_0+\delta+\varepsilon_{0})\mathcal{D}_{2,l,q}(t)
+C\delta(1+t)^{-\frac{4}{3}}.
\end{align*}
By \eqref{1.20a}, \eqref{1.35}, \eqref{2.35a} and the H\"{o}lder inequality, we get
\begin{align*}
&|\big(\partial^\alpha_\beta[\frac{P_{0}(\xi_{1}\sqrt{\mu}\mathbf{g}_{x})}{\sqrt{\mu}}],w^2(\beta)\partial_{\beta}^\alpha \mathbf{g}\big)|
=|\sum^{4}_{j=0}\big(\langle\xi\rangle^{\frac{1}{2}}w(\beta)\partial^\alpha_\beta[\langle\xi_{1}\sqrt{\mu}\mathbf{g}_{x},
\frac{\chi_{j}}{M}\rangle\frac{\chi_{j}}{\sqrt{\mu}}],
\langle\xi\rangle^{-\frac{1}{2}}w(\beta)\partial_{\beta}^\alpha \mathbf{g}\big)|
\\
&\leq C\epsilon\|\partial^\alpha_\beta \mathbf{g}\|^2_{\sigma,w}+
C_\epsilon\|\partial^\alpha \mathbf{g}_x\|_\sigma^2
+C_\epsilon(\delta+\varepsilon_{0})\mathcal{D}_{2,l,q}(t)+C_{\epsilon}\delta(1+t)^{-\frac{4}{3}},
\end{align*}
and
\begin{align*}
&|\big(\partial^\alpha_\beta[\frac{1}{\sqrt{\mu}}P_{1}\xi_{1}M\big\{\frac{|\xi-u|^{2}
\widetilde{\theta}_{x}}{2R\theta^{2}}+\frac{(\xi-u)\cdot\widetilde{u}_{x}}{R\theta}\big\}]
,w^2(\beta)\partial_{\beta}^\alpha \mathbf{g}\big)|
\\
&=|\big(\langle\xi\rangle^{\frac{1}{2}}w(\beta)\partial^\alpha_\beta[\frac{1}{\sqrt{\mu}}P_{1}\xi_{1}M\big\{\frac{|\xi-u|^{2}
\widetilde{\theta}_{x}}{2R\theta^{2}}+\frac{(\xi-u)\cdot\widetilde{u}_{x}}{R\theta}\big\}]
,\langle\xi\rangle^{-\frac{1}{2}}w(\beta)\partial_{\beta}^\alpha \mathbf{g}\big)|
\\
&\leq C\epsilon\|\partial^\alpha_\beta \mathbf{g}\|^2_{\sigma,w}+C_\epsilon\|[\partial^\alpha\widetilde {u}_{x},\partial^\alpha\widetilde{\theta}_x]\|^2
+C_{\epsilon}(\delta+\varepsilon_{0})\mathcal{D}_{2,l,q}(t)+C_{\epsilon}\delta(1+t)^{-\frac{4}{3}}.
\end{align*}
Here we used the inequality $|\langle \xi\rangle^bw(\beta)\mu^{-\frac{1}{2}}M|_2\leq C$
for any $b\ge 0$ by \eqref{2.9a} and  \eqref{2.12}.

For the terms containing $\overline{G}$ in \eqref{3.9}, we have from   \eqref{5.38} and \eqref{1.35} that
\begin{align*}
&|\big(-\partial^\alpha_\beta[\frac{P_{1}(\xi_{1}\overline{G}_{x})}{\sqrt{\mu}}]+\partial^\alpha_\beta[u_{1}\frac{\overline{G}_{x}}{\sqrt{\mu}}]
-\partial^\alpha_\beta[v\frac{\overline{G}_{t}}{\sqrt{\mu}}],w^2(\beta)\partial_{\beta}^\alpha \mathbf{g}\big)|
\\
&\leq C\epsilon\|\langle\xi\rangle^{-\frac{1}{2}}w(\beta)\partial^\alpha_\beta \mathbf{g}\|^2
+C_{\epsilon}\|\langle\xi\rangle^{\frac{1}{2}}w(\beta)\partial^\alpha_\beta[\frac{P_{1}(\xi_{1}\overline{G}_{x})}{\sqrt{\mu}}]\|^2
\\
&\qquad+C_{\epsilon}\|\langle\xi\rangle^{\frac{1}{2}}w(\beta)\partial^\alpha_\beta[u_{1}\frac{\overline{G}_{x}}{\sqrt{\mu}}]\|^2
+C_{\epsilon}\|\langle\xi\rangle^{\frac{1}{2}}w(\beta)\partial^\alpha_\beta[v\frac{\overline{G}_{t}}{\sqrt{\mu}}]\|^2
\\
&\leq C\epsilon\| \partial^\alpha_\beta \mathbf{g}\|_{\sigma,w}^2
+C_{\epsilon}(\delta+\varepsilon_{0})\mathcal{D}_{2,l,q}(t)+C_{\epsilon}\delta(1+t)^{-\frac{4}{3}}.
\end{align*}

If $|\alpha|+|\beta|\leq 2$ with  $|\beta|\geq 1$, for  any small $\epsilon>0$, we have from \eqref{3.9} and the above estimates that
\begin{align}
\label{3.11}
&\frac{1}{2}\frac{d}{dt}\|v^{\frac{1}{2}}\partial^\alpha_\beta \mathbf{g}\|_w^2
+ q_{2}q_{3}(t)\|v^{\frac{1}{2}}\langle \xi\rangle\partial_{\beta}^\alpha \mathbf{g}\|^2_w+c\|\partial^\alpha_\beta \mathbf{g}\|^2_{\sigma,w}
\nonumber\\
&\leq  C\epsilon(\|\partial^\alpha_\beta \mathbf{g}\|^2_{\sigma,w}+\sum_{|\beta'|=1}\|\partial_{\beta'}\mathbf{g}_{t}\|_{\sigma,w}^{2}+\sum_{|\beta'|=1}\|\partial_{\beta'}\mathbf{g}_{x}\|_{\sigma,w}^{2})
+C_\epsilon(\|[\partial^\alpha\widetilde {u}_{x},\partial^\alpha\widetilde{\theta}_x]\|^2+\|\partial^\alpha \mathbf{g}_x\|_{\sigma}^2)
\nonumber\\
&\qquad+ C\epsilon\sum_{|\beta_1|=|\beta|}\|\partial^\alpha_{\beta_1}\mathbf{g}\|_{\sigma,w(\beta_{1})}^2
+C_\epsilon\|\partial^\alpha_{\beta-e_1}\mathbf{g}_{x}\|^2_{\sigma,w(\beta-e_1)}
+C_\epsilon\sum_{|\beta_1|<|\beta|}\|\partial^\alpha_{\beta_1}\mathbf{g}\|_{\sigma,w(\beta_{1})}^2
\nonumber\\
&\qquad+C_\epsilon\delta(1+t)^{-\frac{4}{3}}
+C_\epsilon( \eta_0+\delta+\varepsilon_{0})\mathcal{D}_{2,l,q}(t)+C_\epsilon q_{3}(t)\mathcal{F}_{2,l,q}(t).
\end{align} 
Notice that the coefficients in the last two terms of the third line in \eqref{3.11} are large. We will use the induction in $|\beta|$ and then choose suitably  small $\epsilon>0$ to control these terms. By the suitable linear combinations, there exist some positive constants $c_{5}$,
and $C_{ \alpha,\beta }$  such that
\begin{align}
\label{3.12}
   \sum_{|\alpha|+|\beta|\leq 2,|\beta|\geq 1}&\big\{ \frac{d}{dt}(C_{ \alpha,\beta }\|v^{\frac{1}{2}}\partial^\alpha_\beta \mathbf{g}\|_w^2)
+c_{5}q_{2}q_{3}(t)\|v^{\frac{1}{2}}\langle \xi\rangle\partial_{\beta}^\alpha \mathbf{g}\|^2_w
+c_{5}\|\partial^\alpha_\beta \mathbf{g}\|^2_{\sigma,w}\big\}
\nonumber\\
&\leq C\sum_{ |\alpha|\leq 1}(\|[\partial^\alpha\widetilde{\theta}_x,\partial^\alpha\widetilde {u}_{x}]\|^2+\|\partial^\alpha \mathbf{g}\|_{\sigma,w}^2+\|\partial^\alpha \mathbf{g}_x\|_{\sigma,w}^2)
\nonumber\\
&\qquad +C\delta(1+t)^{-\frac{4}{3}}
+C( \eta_0+\delta+\varepsilon_{0})\mathcal{D}_{2,l,q}(t)+Cq_{3}(t)\mathcal{F}_{2,l,q}(t).
\end{align}
For some large constant  $\widetilde{C}_{6}>0$, we denote $E_{2}(t)$ as
\begin{align}
\label{3.14}
E_{2}(t)=\widetilde{C}_{6}\big\{\sum_{|\alpha|\leq 1}\|v^{\frac{1}{2}}\partial^\alpha \mathbf{g}\|_w^2
+\sum_{|\alpha|=2}\|v^{\frac{1}{2}}\frac{\partial^\alpha F}{\sqrt{\mu}}\|_w^2\big\}
+\sum_{|\alpha|+|\beta|\leq 2,|\beta|\geq 1} C_{ \alpha,\beta }\|v^{\frac{1}{2}}\partial^\alpha_\beta \mathbf{g}\|_w^2.
\end{align}
By using this, \eqref{2.12} and a suitable linear combination of \eqref{3.8} and \eqref{3.12}, we arrive at
\begin{align}
\label{3.13}
&\frac{d}{dt}E_{2}(t)+\frac{c_{5}}{4}q_{2}q_{3}(t)\mathcal{F}_{2,l,q}(t)+\frac{c_{5}}{4}\sum_{|\alpha|+|\beta|\leq 2}\|\partial^\alpha \mathbf{g}\|^2_{\sigma,w}
\nonumber\\
&\leq C(\sum_{1\leq|\alpha|\leq 2}\|\partial^{\alpha}[\widetilde{v},\widetilde{u},\widetilde{\theta}]\|^2
+\sum_{|\alpha|\leq 2}\|\partial^\alpha \mathbf{g}\|^2_{\sigma})+Cq_{3}(t)\mathcal{F}_{2,l,q}(t)
\nonumber\\
&\qquad+C(\eta_{0}+\delta+\varepsilon_{0})\mathcal{D}_{2,l,q}(t)
+C\delta(1+t)^{-\frac{4}{3}}.
\end{align}
Here $\mathcal{D}_{2,l,q}(t)$, $q_{3}(t)$ and $\mathcal{F}_{2,l,q}(t)$ are defined by \eqref{2.11}, \eqref{1.33} and \eqref{2.48}, respectively.
This completes the proof of the weighted energy estimates for the microscopic component $\mathbf{g}$.\qed

\section{Global existence and large time behavior}\label{sec.4}
In this section, we will establish our main theorem by the energy estimates derived in section \ref{sec.2} and \ref{sec.3}.

\medskip
\noindent{\it Proof of Theorem \ref{thm1.1}:}
For some large positive constant  $\widetilde{C}_{7}$ with $\widetilde{C}_{7}\gg\widetilde{C}_{6}$, by \eqref{2.59} and \eqref{3.14},  we define $\overline{\mathcal{E}}_{2,l,q}(t)$ as
\begin{align}
\label{4.3}
\overline{\mathcal{E}}_{2,l,q}(t)&=\widetilde{C}_{7}E_{1}(t)+E_{2}(t)\notag\\
&=\widetilde{C}_{7}\Big\{\int_{\mathbb{R}}\Big(\widetilde{C}_1\big(\frac{2}{3}\bar{\theta}\Phi(\frac{v}{\bar{v}})+\frac{1}{2}\widetilde{u}^{2}
+\bar{\theta}\Phi(\frac{\theta}{\bar{\theta}})\big)-\widetilde{C}_1\kappa_{1}\widetilde{u}_{1}\widetilde{v}_{x}\Big)\,dx
+\|v^{\frac{1}{2}}\mathbf{g}\|^{2}\Big\}
\nonumber\\
&\quad+\widetilde{C}_{7}\widetilde{C}_{5}\widetilde{C}_{3}\Big\{\sum_{|\alpha|=1}\widetilde{C}_{2}
\int_{\mathbb{R}}\big(\frac{p_{+}}{2}|\partial^{\alpha}\widetilde{v}|^{2}+\frac{v}{2}|\partial^{\alpha}\widetilde{u}_{1}|^{2}
+\frac{1}{2}\sum^{3}_{i=2}|\partial^{\alpha}\widetilde{u}_{i}|^{2}+\frac{R}{2p_{+}}|\partial^{\alpha}\widetilde{\theta}|^{2}\big)\,dx\notag\\
&\qquad\qquad\qquad\qquad\qquad\qquad-(\widetilde{u}_{1x},\widetilde{v}_{xx})\Big\}
\nonumber\\
&\quad+\widetilde{C}_{7}\widetilde{C}_{5}\Big\{\sum_{|\alpha|=1}\|v^{\frac{1}{2}}\partial^{\alpha}\mathbf{g}\|^{2}
+\widetilde{C}_{4}\sum_{|\alpha|=2}\|v^{\frac{1}{2}}\frac{\partial^{\alpha}F}{\sqrt{\mu}}\|^{2}\Big\}
\nonumber\\
&\quad+\widetilde{C}_{6}\Big\{\sum_{|\alpha|\leq 1}\|v^{\frac{1}{2}}\partial^\alpha \mathbf{g}\|_w^2
+\sum_{|\alpha|=2}\|v^{\frac{1}{2}}\frac{\partial^\alpha F}{\sqrt{\mu}}\|_w^2\Big\}
+\sum_{|\alpha|+|\beta|\leq 2,|\beta|\geq 1} C_{ \alpha,\beta }\|v^{\frac{1}{2}}\partial^\alpha_\beta \mathbf{g}\|_w^2.
\end{align}
By using this  and a suitable linear combination of \eqref{2.58} and \eqref{3.13}, we arrive at
\begin{multline}
\label{4.1}
\frac{d}{dt}\overline{\mathcal{E}}_{2,l,q}(t)+2c_{6}q_{2}q_{3}(t)\mathcal{F}_{2,l,q}(t)+2c_{6}
\sum_{1\leq|\alpha|\leq2}\|\partial^\alpha[\widetilde{v} ,\widetilde{u} ,\widetilde{\theta}]\|^2
+2c_{6}\sum_{|\alpha|+|\beta|\leq2}\|\partial^\alpha_\beta \mathbf{g}\|^2_{\sigma,w(\beta)}\\
\leq C_{6}(\eta_{0}+\delta+\varepsilon_{0})\mathcal{D}_{2,l,q}(t)
+C_{6}\delta(1+t)^{-\frac{4}{3}}+C_{6}\delta\int_{\mathbb{R}}(\widetilde{v}^{2}+\widetilde{\theta}^{2})\omega^{2}\,dx
+C_{6}q_{3}(t)\mathcal{F}_{2,l,q}(t).
\end{multline}
Here $c_{6}$ and $C_{6}$ are some given positive constants.
Integrating \eqref{4.1} with respect to $t$ and taking $\eta_0$, $\delta$ and $\varepsilon_{0}$ small enough, we have from \eqref{2.11} and
Lemma \ref{lem5.6} that
\begin{align}
\label{4.2}
&\overline{\mathcal{E}}_{2,l,q}(t)+2c_{6}q_{2}\int^{t}_{0}q_{3}(s)\mathcal{F}_{2,l,q}(s)\,ds+c_{6}\int^{t}_{0}\mathcal{D}_{2,l,q}(s)\,ds\nonumber\\
&
\leq \overline{\mathcal{E}}_{2,l,q}(0)+3C_{6}\delta+C_{6}C_{2}\delta+C_{6}\int^{t}_{0}q_{3}(s)\mathcal{F}_{2,l,q}(s)\,ds.
\end{align}
Due to the fact that $q_{2}=\frac{1}{\tilde{C}_0\sqrt{\varepsilon_{0}}}$ for some sufficiently small $\varepsilon_{0}>0$, then
$C_{6}< c_{6}q_{2}$. It follows from \eqref{4.2} that
\begin{align}
\label{4.4}
\overline{\mathcal{E}}_{2,l,q}(t)+c_{6}q_{2}\int^{t}_{0}q_{3}(s)\mathcal{F}_{2,l,q}(s)\,ds+c_{6}\int^{t}_{0}\mathcal{D}_{2,l,q}(s)\,ds
\leq \overline{\mathcal{E}}_{2,l,q}(0)+3C_{6}\delta+C_{6}C_{2}\delta.
\end{align}
By the definition of $\overline{\mathcal{E}}_{2,l,q}(t)$ in \eqref{4.3} and $\mathcal{E}_{2,l,q}(t)$
in \eqref{2.10}, there exists a constant $C_{7}>1$ such that
\begin{eqnarray}
\label{4.5}
C_{7}^{-1}(\mathcal{E}_{2,l,q}(t)-\delta)\leq\overline{\mathcal{E}}_{2,l,q}(t)
\leq C_{7}(\mathcal{E}_{2,l,q}(t)+\delta).
\end{eqnarray}
We thus derive from \eqref{4.4} and \eqref{4.5} that
\begin{align*}
\sup_{0\leq t\leq T}\mathcal{E}_{2,l,q}(t)&\leq C_{7}\sup_{0\leq t\leq T}\overline{\mathcal{E}}_{2,l,q}(t)
+\delta\leq C_{7}(\overline{\mathcal{E}}_{2,l,q}(0)+3C_{6}\delta+C_{6}C_{2}\delta)+\delta
\nonumber\\
&<(C_{7})^{2}(\mathcal{E}_{2,l,q}(0)+\delta)+C_{7}(3C_{6}\delta+C_{6}C_{2}\delta+\delta).
\end{align*}
If we choose $C_0=6(C_{7})^{2}+6C_{7}(3C_{6}+C_{6}C_{2}+1)$ in \eqref{1.39}, we arrive at
\begin{align}
\label{4.6}
 \sup_{0\leq t\leq T}\mathcal{E}_{2,l,q}(t)<\frac{1}{3}C_0(\mathcal{E}_{2,l,q}(0)+\delta)
<\frac{1}{2}\varepsilon^{2}_{0}.
\end{align}
If we choose $C_{1}=c_{6}$ in \eqref{2.9}, for any $t\in(0,T]$,  we have from \eqref{4.4}, \eqref{4.5} and \eqref{4.6} that
\begin{align}
\label{4.6a}
 C_{1}\int^{t}_{0}\mathcal{D}_{2,l,q}(s)\,ds
\leq \overline{\mathcal{E}}_{2,l,q}(0)+3C_{6}\delta+C_{6}C_{2}\delta<\frac{1}{3}C_0(\mathcal{E}_{2,l,q}(0)+\delta)
<\frac{1}{2}\varepsilon^{2}_{0}.
\end{align}
Thus the a priori assumption \eqref{2.9} can be closed by \eqref{4.6} and \eqref{4.6a}.

The local existence of the solutions to the
Landau system \eqref{1.1} near a global Maxwellian was proved in \cite{Guo-2002}. By
the estimates of $[\bar{v},\bar{u},\bar{\theta}]$ and a straightforward modification of the arguments there, we can obtain the local existence
of the solutions to the Landau equation \eqref{1.16} and \eqref{1.17} with $F(t, x, \xi)\geq 0$
under the assumptions in Theorem \ref{thm1.1}.
Hence, by the uniform   estimates  and the local existence of the solution,
the standard continuity argument gives the existence and uniqueness of global  solution
to the Landau equation \eqref{1.16} with initial data \eqref{1.17}. For any $t>0$, we also have
\begin{align}
\label{4.9}
\mathcal{E}_{2,l,q}(t)
+C\int^{t}_{0}\mathcal{D}_{2,l,q}(s)\,ds\leq \varepsilon^{2}_{0}.
\end{align}

We are going to justify the time asymptotic stability of contact waves as
\eqref{1.41}. By the expression of $M$ in \eqref{1.5} with $\rho=1/v$ and $\overline{M}$ in \eqref{1.36}, we have from \eqref{2.12}, \eqref{5.38} and the
imbedding inequality that
$$
\|\frac{(M-\overline{M})_{x}}{\sqrt{\mu}}\|^2+\|\frac{\overline{G}_{x}}{\sqrt{\mu}}\|^{2}
\leq  C \delta(1+t)^{-\frac{4}{3}}+C\mathcal{D}_{2,l,q}(t).
$$
By \eqref{1.35} and  \eqref{2.11}, for any $l\geq 2$, one has
$$
\|\mathbf{g}_{x}\|^{2}\leq\|\langle \xi\rangle^{\frac{1}{2}}\mathbf{g}_{x}\|_{\sigma}^{2}\leq
\|\langle \xi\rangle^{l}\mathbf{g}_{x}\|_{\sigma}^{2}\leq C\mathcal{D}_{2,l,q}(t).
$$
Since $F=M+\overline{G}+\sqrt{\mu}\mathbf{g}$, we have from the above two estimates that
\begin{align*}
\|\frac{(F-\overline{M})_{x}}{\sqrt{\mu}}\|^2
\leq C\big\{\|\frac{(M-\overline{M})_{x}}{\sqrt{\mu}}\|^2+\|\frac{\overline{G}_{x}}{\sqrt{\mu}}\|^{2}
+\|\mathbf{g}_{x}\|^{2}\big\}
\leq C\mathcal{D}_{2,l,q}(t)+C \delta(1+t)^{-\frac{4}{3}}.
\end{align*}
Similarly, it holds that
\begin{align*}
\|\frac{(F-\overline{M})_{xt}}{\sqrt{\mu}}\|^2
\leq C\big\{\|\frac{(M-\overline{M})_{xt}}{\sqrt{\mu}}\|^2+\|\frac{\overline{G}_{xt}}{\sqrt{\mu}}\|^{2}
+\|\mathbf{g}_{xt}\|^{2}\big\}
\leq C\mathcal{D}_{2,l,q}(t)+C \delta(1+t)^{-\frac{4}{3}}.
\end{align*}
By  \eqref{4.9} and the above two estimates, one has
\begin{align*}
\int_{0}^{+\infty}\|\frac{(F-\overline{M})_{x}}{\sqrt{\mu}}\|^{2}\,dt
+\int_{0}^{+\infty}|\frac{d}{dt}\|\frac{(F-\overline{M})_{x}}{\sqrt{\mu}}\|^{2}|\,dt
<C(\varepsilon^{2}_0+ \delta),
\end{align*}
which implies that
\begin{align*}
\lim_{t\to +\infty}\|\frac{(F-\overline{M})_{x}}{\sqrt{\mu}}\|^{2}=0.
\end{align*}
By using the  imbedding inequality, we get
$$
\sup_{x\in\mathbb{R}}|\frac{(F-\overline{M})}{\sqrt{\mu}}|_{2}^{2}\leq 2\|\frac{F-\overline{M}}{\sqrt{\mu}}\|\|\frac{(F-\overline{M})_{x}}{\sqrt{\mu}}\|.
$$
It follows that
\begin{align*}
&\lim_{t\to +\infty}\sup_{x\in\mathbb{R}}|\frac{(F-\overline{M})}{\sqrt{\mu}}|_{2}^{2}=0.
\end{align*}
This gives \eqref{1.41} and then completes the proof of Theorem \ref{thm1.1}.\qed

\section{Appendix}\label{sec.5}

\subsection{Burnett functions}\label{sec.6.1}
In this appendix, we will give some basic estimates used in the previous energy estimates.
To overcome some difficulties due to the term involving $L^{-1}_{M}$ and $\overline{G}$, we need to consider the integrality about the velocity.
To this end, we first list some properties of the Burnett functions and then give the fast decay about the velocity $\xi$ of the Burnett functions.
Recall the Burnett functions, cf. \cite{Bardos,Chapman-1990,Guo-2006,Ukai-Yang}:
\begin{equation}
\label{5.1}
\hat{A}_{j}(\xi)=\frac{|\xi|^{2}-5}{2}\xi_{j}\quad \mbox{and} \quad \hat{B}_{ij}(\xi)=\xi_{i}\xi_{j}-\frac{1}{3}\delta_{ij}|\xi|^{2} \quad \mbox{for} \quad i,j=1,2,3.
\end{equation}
Noting that $\hat{A}_{j}M$ and $\hat{B}_{ij}M$ are orthogonal to the null space $\mathcal{N}$ of $L_{M}$, we can define
functions $A_{j}(\frac{\xi-u}{\sqrt{R\theta}})$ and $ B_{ij}(\frac{\xi-u}{\sqrt{R\theta}})$ such that $P_{0}A_{j}=0$,
$P_{0}B_{ij}=0$ and
\begin{equation}
\label{5.2}
A_{j}(\frac{\xi-u}{\sqrt{R\theta}})=L^{-1}_{M}[\hat{A}_{j}(\frac{\xi-u}{\sqrt{R\theta}})M]\quad
\mbox{and} \quad B_{ij}(\frac{\xi-u}{\sqrt{R\theta}})=L^{-1}_{M}[\hat{B}_{ij}(\frac{\xi-u}{\sqrt{R\theta}})M].
\end{equation}
We shall list some elementary but important properties of the Burnett functions summarized in the following lemma, cf. \cite{Guo-2006,Bardos,Ukai-Yang}.

\begin{lemma}
The Burnett functions have the following properties:
\begin{itemize}
\item{$-\langle \hat{A}_{i}, A_{i}\rangle$ ~~is positive and independent of i;}
\item{$\langle \hat{A}_{i}, A_{j}\rangle=0$ ~~for ~any ~$i\neq j$;\quad $\langle
     \hat{A}_{i}, B_{jk}\rangle=0$~~for ~any ~i,~j,~k;}
\item{$\langle\hat{B}_{ij},B_{kj}\rangle=\langle\hat{B}_{kl},B_{ij}\rangle=\langle\hat{B}_{ji},B_{kj}\rangle$,~~
      which is independent of ~i,~j, for fixed~~k,~l;}
\item{$-\langle \hat{B}_{ij}, B_{ij}\rangle$ ~~is positive and independent of i,~j when $i\neq j$;}
\item{$-\langle \hat{B}_{ii}, B_{jj}\rangle$ ~~is positive and independent of i,~j when $i\neq j$;}
\item{$-\langle \hat{B}_{ii}, B_{ii}\rangle$ ~~is positive and independent of i;}
\item{$\langle \hat{B}_{ij}, B_{kl}\rangle=0$ ~~unless~either~$(i,j)=(k,l)$~or~$(l,k)$,~or~i=j~and~k=l;}
\item{$\langle \hat{B}_{ii}, B_{ii}\rangle-\langle \hat{B}_{ii}, B_{jj}\rangle=2\langle \hat{B}_{ij},
      B_{ij}\rangle$ ~~holds for any~ $i\neq j$.}
\end{itemize}
\end{lemma}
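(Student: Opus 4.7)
The plan is to exploit the rotational and reflectional symmetries of the linearized Landau operator $L_M$ to reduce all eight bullet points to elementary tensor-algebra identities. First, I would normalize by the change of variable $\eta=(\xi-u)/\sqrt{R\theta}$, so that $M$ becomes (up to a scalar depending only on $v,\theta$) the isotropic Maxwellian in $\eta$. Recall that the Landau collision kernel $\phi(z)$ in \eqref{def.lk} depends only on $|z|$ and on the orthogonal projection onto $z^{\perp}$, so $L_{M}$ commutes with every orthogonal transformation $O\in O(3)$ acting on $\eta$: writing $(T_O f)(\eta)=f(O^{-1}\eta)$, one has $T_O L_M = L_M T_O$, hence $T_O L_M^{-1}=L_M^{-1}T_O$ on $(\ker L_M)^{\perp}$. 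Moreover $L_M$ is self-adjoint and negative semi-definite with respect to the $L^2(M^{-1}\,d\xi)$ inner product and strictly negative on $(\ker L_M)^{\perp}$; the pairings in the statement satisfy $\langle \hat A_i,A_j\rangle=(L_M A_i,A_j)_{M^{-1}}$ via $L_M A_j=\hat A_j M$, and likewise for the $\hat B_{ij}$.

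Second, I would deduce the tensorial transformation laws of $A_j$ and $B_{ij}$. Since $\hat A_j(\eta)=\tfrac12(|\eta|^2-5)\eta_j$ transforms under $O(3)$ as a vector and $\hat B_{ij}(\eta)=\eta_i\eta_j-\tfrac13\delta_{ij}|\eta|^2$ as a symmetric trace-free rank-two tensor, the commutation $T_O L_M^{-1}=L_M^{-1}T_O$ forces $A_j$ to transform as a vector and $B_{ij}$ as a symmetric trace-free tensor. In particular, reflecting the $k$-th coordinate of $\eta$ shows that $A_j$ is odd in $\eta_k$ exactly when $j=k$, and $B_{ij}$ is even in $\eta_k$ unless $k\in\{i,j\}$ (with the sign pattern of $\eta_i\eta_j$ otherwise).

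Third, I would derive each of the eight identities from these symmetries. Parity alone yields $\langle \hat A_i,A_j\rangle=0$ for $i\ne j$ (integrand odd in $\eta_i$) and $\langle \hat A_i,B_{jk}\rangle=0$ for all $i,j,k$ (a reflection in a suitable single coordinate flips exactly one of the two factors: if $i\notin\{j,k\}$ flip $\eta_i$; if $i=j\ne k$ flip $\eta_k$; and symmetrically). By Schur's lemma the $O(3)$-equivariant bilinear form $(i,j)\mapsto\langle\hat A_i,A_j\rangle$ on the vector representation must equal $c_1\delta_{ij}$, giving the first two bullets simultaneously; negativity of $L_M$ on $(\ker L_M)^{\perp}$ yields $c_1<0$, establishing the strict positivity of $-\langle\hat A_i,A_i\rangle$. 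The symmetric trace-free rank-two representation is also irreducible, so the $O(3)$-equivariant bilinear form on it is forced to take the canonical shape
\[
\langle \hat B_{ij},B_{kl}\rangle = c_2\bigl(\delta_{ik}\delta_{jl}+\delta_{il}\delta_{jk}-\tfrac{2}{3}\delta_{ij}\delta_{kl}\bigr),\qquad c_2<0.
\]
All remaining bullets are arithmetic in this formula: the right-hand side vanishes unless $\{i,j\}=\{k,l\}$ or ($i=j$ and $k=l$), giving the symmetry and vanishing assertions; substitution yields $\langle\hat B_{ij},B_{ij}\rangle=c_2$ for $i\ne j$, $\langle\hat B_{ii},B_{jj}\rangle=-\tfrac23 c_2$ for $i\ne j$, and $\langle\hat B_{ii},B_{ii}\rangle=\tfrac43 c_2$, so that
\[
\langle\hat B_{ii},B_{ii}\rangle-\langle\hat B_{ii},B_{jj}\rangle=\tfrac43 c_2+\tfrac23 c_2=2c_2=2\langle\hat B_{ij},B_{ij}\rangle.
\]

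The step I would be most careful about is the Schur-type reduction: one must verify that the vector representation and the symmetric trace-free rank-two representation of $O(3)$ are irreducible, and that the tuples $\{A_j\}$, $\{B_{ij}\}$ actually realize these representations inside $(\ker L_M)^{\perp}$. The first is classical; the second follows because $\{\hat A_j M\}$ and $\{\hat B_{ij}M\}$ span precisely the corresponding isotypic components, and $L_M^{-1}$ preserves each isotypic component owing to the commutation with every $T_O$. Once this identification is in place, all eight bullets of the lemma follow uniformly, with the single scalar $c_1<0$ (resp.\ $c_2<0$) encoding all of the vector (resp.\ tensor) pairings.
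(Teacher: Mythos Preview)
Your argument via $O(3)$-equivariance and Schur's lemma is the standard route and is correct; since the paper gives no proof of this lemma but only cites \cite{Guo-2006,Bardos,Ukai-Yang}, there is nothing to compare against, and your approach is essentially what those references do.

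One point deserves explicit comment. Your closed form $\langle \hat B_{ij},B_{kl}\rangle = c_2\bigl(\delta_{ik}\delta_{jl}+\delta_{il}\delta_{jk}-\tfrac{2}{3}\delta_{ij}\delta_{kl}\bigr)$ with $c_2<0$ gives $\langle \hat B_{ii},B_{jj}\rangle=-\tfrac{2}{3}c_2>0$ for $i\ne j$, so that $-\langle \hat B_{ii},B_{jj}\rangle$ is \emph{negative}, contradicting the fifth bullet as stated. Your computation is the right one: the trace-free identity $\sum_j \hat B_{jj}=0$ forces $\sum_j B_{jj}=L_M^{-1}(0)=0$, hence $\langle\hat B_{ii},B_{ii}\rangle+2\langle\hat B_{ii},B_{jj}\rangle=0$, so $\langle\hat B_{ii},B_{jj}\rangle$ necessarily has the opposite sign to $\langle\hat B_{ii},B_{ii}\rangle$. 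The fifth bullet therefore contains a sign typo (it is $\langle\hat B_{ii},B_{jj}\rangle$, not its negative, that is positive), and you should flag this rather than claim to have proved the bullet as written. Likewise the third bullet has evident index misprints; your Schur formula yields the intended symmetries $\langle\hat B_{ij},B_{kl}\rangle=\langle\hat B_{kl},B_{ij}\rangle=\langle\hat B_{ji},B_{kl}\rangle$, and it would be cleaner to state this corrected version.
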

In terms of Burnett functions, the viscosity coefficient $\mu(\theta)$ and heat conductivity
coefficient $\kappa(\theta)$ in \eqref{1.15} can be represented by
\begin{align}
\label{5.3}
\mu(\theta)=&- R\theta\int_{\mathbb{R}^{3}}\hat{B}_{ij}(\frac{\xi-u}{\sqrt{R\theta}})
B_{ij}(\frac{\xi-u}{\sqrt{R\theta}})\,d\xi>0,\quad i\neq j,
\nonumber\\
\kappa(\theta)=&-R^{2}\theta\int_{\mathbb{R}^{3}}\hat{A}_{j}(\frac{\xi-u}{\sqrt{R\theta}})
A_{j}(\frac{\xi-u}{\sqrt{R\theta}})\,d\xi>0.
\end{align}
Notice that these coefficients are positive smooth functions depending only on $\theta$.
\par
The following lemma is borrowed from \cite[Lemma 6.1]{Duan-Yu1}, which is about
the fast velocity decay of the Burnett functions.
\begin{lemma}\label{lem5.2}
Suppose that $U(\xi)$ is any polynomial of $\frac{\xi-\hat{u}}{\sqrt{R}\hat{\theta}}$ such that
$U(\xi)\widehat{M}\in(\ker{L_{\widehat{M}}})^{\perp}$ for any Maxwellian $\widehat{M}=M_{[1/\widehat{v},\widehat{u},\widehat{\theta}]}(\xi)$
 as \eqref{1.36} where $L_{\widehat{M}}$ is as in \eqref{1.21} .
For any $\varepsilon\in(0,1)$ and any multi-index $\beta$, there exists constant $C_{\beta}>0$ such that
$$
|\partial_{\beta}L^{-1}_{\widehat{M}}(U(\xi)\widehat{M})|\leq C_{\beta}(\widehat{v},\widehat{u},\widehat{\theta})\widehat{M}^{1-\varepsilon}.
$$
In particular, under the assumptions of \eqref{2.12}, there exists constant $C_{\beta}>0$ such that
\begin{equation}
\label{5.4}
|\partial_{\beta}A_{j}(\frac{\xi-u}{\sqrt{R\theta}})|+|\partial_{\beta}B_{ij}(\frac{\xi-u}{\sqrt{R\theta}})|
\leq C_{\beta}M^{1-\varepsilon}.
\end{equation}
\end{lemma}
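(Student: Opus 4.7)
The plan is to reduce to a reference Maxwellian and then prove a weighted Sobolev estimate for $f = L^{-1}_{\widehat{M}}(U\widehat{M})$ that implies the desired pointwise bound via Sobolev embedding. First I would perform the change of variables $\eta = (\xi - \hat{u})/\sqrt{R\hat{\theta}}$, under which $L_{\widehat{M}}$ is transformed into a positive scalar multiple of the linearized Landau operator $L_{\mu_0}$ at the fixed normalized Maxwellian $\mu_0 = M_{[1,0,1]}$, the weight $\widehat{M}$ becomes a constant multiple of $\mu_0$, and the polynomial $U(\xi)$ becomes a polynomial $\widetilde{U}(\eta)$ of the same degree. The orthogonality $U\widehat{M} \perp \ker L_{\widehat{M}}$ is preserved, so it suffices to establish the estimate at $L_{\mu_0}$; all dependence on $(\hat{v},\hat{u},\hat{\theta})$ is absorbed into the final constant $C_\beta(\hat{v},\hat{u},\hat{\theta})$ via the explicit scaling factors.

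Having reduced the problem, let $f = L^{-1}_{\mu_0}(\widetilde{U}\mu_0)$, so that $L_{\mu_0} f = \widetilde{U}\mu_0$ with $f \perp \ker L_{\mu_0}$. The strategy is to prove weighted $H^k$ bounds
$$
\|\partial_{\beta'} f\cdot \mu_0^{-(1-\varepsilon)}\|_{L^2(\R^3_\xi)} \leq C_{\beta',\varepsilon}, \qquad |\beta'| \leq |\beta| + k,
$$
for some fixed $k > 3/2$, and then apply the Sobolev embedding $H^k(\R^3) \hookrightarrow L^\infty(\R^3)$ to $\partial_\beta f \cdot \mu_0^{-(1-\varepsilon)}$. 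The base $L^2$ estimate comes from testing $L_{\mu_0} f = \widetilde{U}\mu_0$ against $\mu_0^{-2(1-\varepsilon)} f$ and invoking the coercivity $-\langle L_{\mu_0} g, g\rangle \geq c_2 |g|_\sigma^2$ from \eqref{2.6} on $(\ker L_{\mu_0})^\perp$; the commutator $[\mu_0^{-(1-\varepsilon)}, L_{\mu_0}]$ produces first-order terms in $\xi$ that are absorbed by the Gaussian factor $\mu_0^\varepsilon$ appearing on the right-hand side, provided $\varepsilon < 1$. For derivatives, applying $\partial_\beta$ to the equation gives $L_{\mu_0}(\partial_\beta f) = \partial_\beta(\widetilde{U}\mu_0) - [\partial_\beta, L_{\mu_0}]f$, where the commutator $[\partial_\beta, L_{\mu_0}]$ only involves $\partial_{\beta'}$ with $|\beta'|<|\beta|$ composed with derivatives of $\sigma^{ij}$, and is thus handled by induction on $|\beta|$.

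The main obstacle is the degenerate diffusion structure of $L_{\mu_0}$ under Coulomb interactions: by \eqref{1.35}, the dissipation norm controls only $\langle\xi\rangle^{-1/2}f$ in $L^2$ (with the additional angular and radial derivative weights), so the coercivity is weak at large velocities. When conjugated by the Gaussian weight $\mu_0^{-(1-\varepsilon)}$, the commutator produces a zeroth-order multiplier of the form $\sigma^{ij}\xi_i\xi_j/4$ which grows like $\langle\xi\rangle$, and one must verify that after regrouping this can be dominated by combining the $\sigma$-coercivity with the polynomial-times-$\mu_0^\varepsilon$ right-hand side. Propagating this through an induction on $|\beta|$ without losing the exponent $1-\varepsilon$ in the weight is the technical crux; this is why the bound is only established for every strictly positive $\varepsilon$ rather than at $\varepsilon = 0$, and it forces the reference-Maxwellian reduction to be carried out first so that the delicate weight analysis is performed on a single explicit operator.
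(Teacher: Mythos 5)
First, note that the paper does not actually prove this lemma: it is quoted from \cite[Lemma 6.1]{Duan-Yu1}, so there is no in-paper argument to compare against. Your strategy --- rescale to the normalized Maxwellian, derive weighted $H^{k}_{\xi}$ bounds for $f=L^{-1}_{\mu_0}(\widetilde U\mu_0)$ with weight $\mu_0^{-(1-\varepsilon)}$ via a conjugated coercivity estimate, and conclude by Sobolev embedding --- is the standard route for this type of statement in the Landau setting and is in the same spirit as the cited proof, which works with $g=f/\sqrt{M}$ and exponential weights $e^{q|\xi|^{2}}$ with $q<1/4$ as in \cite{Strain-Guo-2008}; your weight $\mu_0^{-(1-\varepsilon)}$ corresponds exactly to $q=\tfrac14-\tfrac{\varepsilon}{2}<\tfrac14$.

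Two points need correction or completion. First, your description of the weight commutator is off for Coulomb interactions: $\sigma^{ij}\xi_i\xi_j\sim\langle\xi\rangle^{-1}$ decays rather than grows (this is precisely why $|\langle\xi\rangle^{-1/2}g|_2$ appears in \eqref{1.35}). Writing $f=\mu_0^{1-\varepsilon}h$, the Fokker--Planck half $Q(\mu_0,f)=\partial_i\bigl(\sigma^{ij}\partial_jf+\sigma^{ij}\xi_jf\bigr)$ tested against $\mu_0^{-2(1-\varepsilon)}f$ produces $\int\sigma^{ij}\partial_ih\,\partial_jh+\varepsilon(1-\varepsilon)\int\sigma^{ij}\xi_i\xi_j\,h^{2}$ plus bounded lower-order terms; the zeroth-order piece carries a \emph{favorable} sign exactly for $0<\varepsilon<1$, and this, not domination of a growing multiplier, is where the restriction on $\varepsilon$ comes from. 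Second, the genuinely delicate term is not this local commutator but the nonlocal half $Q(f,\mu_0)=\nabla_\xi\cdot\{(\phi*f)\nabla_\xi\mu_0-(\phi*\nabla_\xi f)\mu_0\}$, which your sketch does not address and which is not a commutator with the weight at all. After integration by parts it contributes terms of the form $[(\phi*f)\xi+\phi*\nabla_\xi f]\,\mu_0^{\varepsilon}\,(\nabla_\xi h+(1-\varepsilon)\xi h)$, so one must first bound $\phi*f$ and $\phi*\nabla_\xi f$ in $L^{\infty}_{\xi}$ by \emph{unweighted} Sobolev norms of $f$ (available from the basic solvability of $L_{\mu_0}f=\widetilde U\mu_0$ on $(\ker L_{\mu_0})^{\perp}$ together with elliptic regularity, which also justifies that $\partial_\beta f$ exists at all) and then use the surviving Gaussian $\mu_0^{\varepsilon}$ to absorb everything by Cauchy--Schwarz against the coercive norm. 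With these two repairs your induction on $|\beta|$ closes and the argument is complete.
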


\subsection{Estimates on terms of $\mathcal{L}$ and $\Gamma$}
Now, we shall turn to recall the refined estimates for the linearized operator $\mathcal{L}$
and  the nonlinear collision terms $\Gamma(g_1,g_2)$ defined in \eqref{2.5}. They can be proved by a straightforward modification of the arguments used
in \cite[Lemmas 9]{Strain-Guo-2008} and \cite[Lemmas 2.2-2.3]{Wang} and we thus omit their proofs for brevity.
\begin{lemma}\label{lem5.3}
Assume $0\leq q(t)\ll 1$ in $w=w(\beta)$ defined by \eqref{1.32}.
For any $\epsilon>0$ small enough, there exists $C_\epsilon>0$ such that
\begin{equation}
\label{5.5}
-\langle\partial^\alpha_\beta\mathcal{L}g,w^2(\beta)\partial^\alpha_\beta g\rangle\geq |w(\beta)\partial^\alpha_\beta g|_\sigma^2-\epsilon\sum_{|\beta_1|=|\beta|}|w(\beta_1)\partial^\alpha_{\beta_1} g|_\sigma^2
-C_\epsilon\sum_{|\beta_1|<|\beta|}|w(\beta_1)\partial^\alpha_{\beta_1} g|_\sigma^2.
\end{equation}
If $|\beta| = 0$, there exists $c_{4}>0$ such that
\begin{equation}
\label{5.6}
-\langle\partial^\alpha\mathcal{L}g,w^2(0)\partial^\alpha g\rangle\geq c_{4}|w(0)\partial^\alpha g|_\sigma^2-C_\epsilon|\chi_{\epsilon}(\xi)\partial^\alpha g|_2^2,
\end{equation}
where $\chi_\epsilon(\xi)$ is a general cutoff function depending on $\epsilon$.
\end{lemma}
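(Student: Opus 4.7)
\textbf{Proof proposal for Lemma \ref{lem5.3}.} My plan is to adapt the standard weighted coercivity arguments for the linearized Landau operator (as in \cite{Strain-Guo-2008,Wang}) to the present time-velocity weight $w(\beta)(t,\xi)=\langle\xi\rangle^{l-|\beta|}e^{q(t)\langle\xi\rangle^2}$, crucially exploiting $0\le q(t)\ll 1$ so that the exponential factor behaves almost like a constant in the commutator bookkeeping. I would treat the case $|\beta|=0$ first and then induct on $|\beta|$.

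For $|\beta|=0$, first note that $\partial^\alpha$ commutes with $\mathcal{L}$ since $\mathcal{L}$ acts only in $\xi$, so it suffices to prove the estimate for $\alpha=0$. I would write $\mathcal{L}g=\Gamma(\sqrt{\mu},g)+\Gamma(g,\sqrt{\mu})$ and integrate by parts in $\xi$, using the divergence form of $Q$, to produce the dissipation
\[
-\langle\mathcal{L}g,w^2(0)g\rangle=\sum_{i,j}\int_{{\mathbb R}^3}w^2(0)\big[\sigma^{ij}\partial_{\xi_i}g\,\partial_{\xi_j}g+\sigma^{ij}\tfrac{\xi_i}{2}\tfrac{\xi_j}{2}g^2\big]d\xi+\mathcal{R},
\]
plus error terms $\mathcal{R}$ coming from (i) the derivative of $w^2(0)$ in $\xi$ and (ii) the standard non-local piece of $\mathcal{L}$. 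The main dissipation is exactly $|w(0)g|^2_\sigma$. The derivative $\partial_{\xi}w^2(0)$ produces a factor $O(l\langle\xi\rangle^{-1}+q(t)\langle\xi\rangle)w^2(0)$; since $q(t)\ll 1$ and $\sigma^{ij}$ has at most quadratic growth, the resulting contribution is of lower weighted order and can be absorbed. The non-local piece, together with the compactness-type remainder already present in Guo's original estimate, is controlled via a velocity cutoff: on $|\xi|\ge R$ it is bounded by $\epsilon|w(0)g|_\sigma^2$ (using smallness of $\sigma^{ij}$-weighted factors for large $\xi$ and $q(t)\ll 1$), and on $|\xi|\le R$ it is swallowed into the $C_\epsilon|\chi_\epsilon g|^2_2$ term, yielding \eqref{5.6}.

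For $|\beta|\ge 1$, I would use induction on $|\beta|$. Applying $\partial_\beta$ to $\mathcal{L}g$ and integrating against $w^2(\beta)\partial^\alpha_\beta g$, the ``diagonal'' term $-\langle\mathcal{L}\partial^\alpha_\beta g,w^2(\beta)\partial^\alpha_\beta g\rangle$ reproduces, by the same calculation as above, the full dissipation $|w(\beta)\partial^\alpha_\beta g|^2_\sigma$ plus lower-order errors. The commutator $[\partial_\beta,\mathcal{L}]g$ expands, via the Leibniz rule applied to the convolution $\sigma^{ij}=\phi^{ij}*\mu$, into a sum of terms of the form $\partial_{\beta_2}\sigma^{ij}\,\partial_{\beta_1}\partial^\alpha_{\xi_?} g$ with $|\beta_1|\le |\beta|$ and $|\beta_2|\ge 1$. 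Each such commutator term is paired with $w^2(\beta)\partial^\alpha_\beta g$ and estimated by Cauchy--Schwarz with a small parameter: terms with $|\beta_1|=|\beta|$ are handled with the free small $\epsilon$ (giving $\epsilon\sum_{|\beta_1|=|\beta|}|w(\beta_1)\partial^\alpha_{\beta_1}g|^2_\sigma$), while those with $|\beta_1|<|\beta|$ go into $C_\epsilon\sum_{|\beta_1|<|\beta|}|w(\beta_1)\partial^\alpha_{\beta_1}g|^2_\sigma$. One uses here that $\partial_{\beta_2}\sigma^{ij}$ decays (because of the convolution with $\mu$) and that $w(\beta)/w(\beta_1)$ is controlled by $\langle\xi\rangle^{|\beta_1|-|\beta|}$ times the common exponential factor, which in turn is uniform in $t$ since $q(t)\in(0,q_1]$ with $q_1$ small.

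The main obstacle I anticipate is bookkeeping of the $q(t)\langle\xi\rangle^2$ factors produced when $\partial_\xi$ (or $\partial_\beta$) hits the exponential part of $w(\beta)$: naively these generate quadratic velocity growth that could destroy the dissipation. The key point is that such contributions always come multiplied by $q(t)\ll 1$, so they can be absorbed into the leading $|w(\beta)\partial^\alpha_\beta g|^2_\sigma$ after checking they only involve the dissipative quantities $\sqrt{\sigma^{ij}}\partial_{\xi_i}$ or $|\xi|\sqrt{\sigma^{ij}}$ (of at most the same weighted order). This smallness of $q(t)$ is precisely the hypothesis $0\le q(t)\ll 1$ in the lemma, and it is the mechanism that lets the same proof as in \cite{Strain-Guo-2008,Wang} (where the exponential factor is a fixed small constant) go through essentially verbatim.
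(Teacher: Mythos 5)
Your proposal is correct and follows essentially the route the paper itself takes: the paper omits the proof of this lemma entirely, stating that it follows by a straightforward modification of \cite[Lemma 9]{Strain-Guo-2008} and \cite[Lemmas 2.2--2.3]{Wang}, and your sketch is precisely that standard argument (divergence-form integration by parts for the main $\sigma$-dissipation, velocity cutoff for the compact remainder, Leibniz on $\sigma^{ij}=\phi^{ij}\ast\mu$ for the $\beta$-commutators) with the correct observation that the extra $q(t)\langle\xi\rangle$ factors from differentiating the exponential weight are absorbed thanks to $0\le q(t)\ll 1$ and the decay of $\sigma^{ij}\xi_j$.
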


\begin{lemma}
Under the assumptions of Lemma \ref{lem5.3}. For any $\varepsilon>0$ small enough, one has
\begin{equation}
\label{5.7}
\langle\partial^\alpha \Gamma(g_1,g_2),    g_3\rangle\leq C\sum_{|\alpha_1|\leq|\alpha|}|\mu^\varepsilon\partial^{\alpha_1}g_1|_2| \partial^{\alpha-\alpha_1}g_2|_\sigma|  g_3|_\sigma,
\end{equation}
and
\begin{equation}
\label{5.8}
\langle\partial^\alpha_\beta \Gamma(g_1,g_2), w^2(\beta)   g_3\rangle\leq
C\sum_{|\alpha_1|\leq|\alpha|}\sum_{|\bar{\beta}|\leq|\beta_1|\leq|\beta|}|\mu^\varepsilon\partial^{\alpha_1}_{\bar{\beta}}g_1|_2|w(\beta)  \partial^{\alpha-\alpha_1}_{\beta-\beta_1}g_2|_{\sigma}|w(\beta)    g_3|_{\sigma}.
\end{equation}
\end{lemma}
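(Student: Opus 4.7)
The plan is to derive both inequalities by reducing to the pure trilinear estimate $|\langle\Gamma(h_1,h_2),h_3\rangle|\lesssim |\mu^\varepsilon h_1|_2|h_2|_\sigma|h_3|_\sigma$ on the Landau collision operator, and then distributing derivatives, first in $(t,x)$ and then in $\xi$, through the bilinear form $\Gamma$. I would begin by recording the explicit integration-by-parts representation of $\Gamma(f_1,f_2)=\mu^{-1/2}Q(\sqrt\mu f_1,\sqrt\mu f_2)$ from Guo (2002), which writes the inner product $\langle \Gamma(f_1,f_2),f_3\rangle$ as a sum of integrals of the form
\begin{equation*}
\int_{\mathbb R^3}\int_{\mathbb R^3}\phi^{ij}(\xi-\xi_*)\bigl[\sqrt\mu(\xi_*)f_1(\xi_*)\bigr]\bigl[\partial_i\{\sqrt\mu(\xi) f_2(\xi)\}\bigr]\bigl[\partial_j\{\sqrt\mu(\xi)f_3(\xi)\}\bigr]d\xi_*d\xi/\mu,
\end{equation*}
plus symmetric pieces. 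The Coulomb kernel $\phi^{ij}$ of \eqref{def.lk} combined with the two factors of $\sqrt\mu$ produces, after one factor of $\mu^\varepsilon$ is pulled off $f_1$, the spectral weight $\sigma^{ij}(\xi)$ in \eqref{1.34} acting on the remaining $f_2$ and $f_3$ factors. This is exactly the content of \cite[Lemma~9]{Strain-Guo-2008}, and it yields the unweighted trilinear bound $|\langle\Gamma(h_1,h_2),h_3\rangle|\leq C|\mu^\varepsilon h_1|_2|h_2|_\sigma|h_3|_\sigma$.

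Next, for \eqref{5.7}, since $\mu$ is independent of $(t,x)$, the pure spatio-temporal derivative commutes with $\Gamma$, so by the Leibniz rule
\begin{equation*}
\partial^\alpha\Gamma(g_1,g_2)=\sum_{\alpha_1\leq\alpha}C^{\alpha_1}_\alpha\,\Gamma(\partial^{\alpha_1}g_1,\partial^{\alpha-\alpha_1}g_2).
\end{equation*}
Applying the pointwise Landau trilinear estimate in velocity to each summand and then integrating in $x$ with Cauchy--Schwarz gives \eqref{5.7} directly.

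The weighted estimate \eqref{5.8} is the substantive one. Here $\partial^\alpha_\beta$ acts on both velocity and $(t,x)$ variables, and the test function carries the weight $w^2(\beta)=\langle\xi\rangle^{2(l-|\beta|)}e^{2q(t)\langle\xi\rangle^2}$. I would again Leibniz-expand $\partial^\alpha_\beta\Gamma(g_1,g_2)$ using the explicit form of $Q$; the derivatives in $\beta_1$ hitting the $\sqrt\mu(\xi_*)$ factor inside the convolution act on $g_1$ (producing $\partial_{\bar\beta}g_1$ with $\bar\beta\leq\beta_1$ and polynomial factors in $\xi_*$ absorbed into $\mu^\varepsilon$), those hitting the outer $\sqrt\mu(\xi)$ produce polynomial factors in $\xi$ that are absorbed into $w(\beta)$, and the remaining derivatives in $\beta-\beta_1$ land on $g_2$. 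Testing against $w^2(\beta)g_3$ and splitting $w^2(\beta)=w(\beta)\cdot w(\beta)$, I would distribute one copy of $w(\beta)$ onto $\partial^{\alpha-\alpha_1}_{\beta-\beta_1}g_2$ and the other onto $g_3$. The key observation enabling this distribution is that $\langle\xi\rangle^{l-|\beta|}\leq \langle\xi\rangle^{l-|\beta-\beta_1|}$ since $|\beta_1|\geq 0$, so the polynomial factor in $w$ can be moved freely onto $\partial^{\alpha-\alpha_1}_{\beta-\beta_1}g_2$. The surviving exponential factor $e^{2q(t)\langle\xi\rangle^2}$ is handled by choosing $\varepsilon>0$ with $2q_1<\varepsilon\ll 1$, which is permitted by the assumption $0\leq q(t)\ll 1$ of Lemma~\ref{lem5.3} and the smallness $\sup_tq(t)\in(0,\varepsilon_0)$ from \eqref{2.9a}; then $\mu^\varepsilon(\xi_*)e^{q(t)\langle\xi_*\rangle^2}$ still decays like a Gaussian on the $g_1$-slot, so the Gaussian factor from $w(\beta)$ appearing via the convolution against $\sqrt\mu(\xi_*)f_1(\xi_*)$ is absorbed into a (relabeled, slightly smaller) $\mu^\varepsilon$. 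Applying the pointwise Landau trilinear estimate now with the $\sigma$-spectral structure acting on the weighted $g_2,g_3$ factors and then integrating in $x$ with Cauchy--Schwarz yields exactly \eqref{5.8} with the stated double summation in $\alpha_1$ and $(\bar\beta,\beta_1)$.

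The main obstacle is the combinatorial bookkeeping in the velocity-derivative expansion: one must check that every term generated by Leibniz on the two $\sqrt\mu$ factors, on the kernel $\phi^{ij}$, and on $g_1,g_2$ separately can genuinely be cast in the target form with the $\mu^\varepsilon$ factor always placed on the $g_1$ slot and the index constraint $|\bar\beta|\leq|\beta_1|\leq|\beta|$ respected. The exponential weight adds a second, analytically delicate step, since naive pointwise bounds on $\phi^{ij}\ast(e^{q\langle\cdot\rangle^2}\mu^{1/2}f)$ would lose Gaussian decay; the resolution is the above absorption of $e^{q\langle\xi\rangle^2}$ into $\mu^\varepsilon$, which works precisely because $q(t)$ stays below any fixed positive constant smaller than $\varepsilon$ by the smallness of $q_1$ built into \eqref{1.39}.
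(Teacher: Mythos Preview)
Your approach is correct and matches what the paper does: the paper omits the proof entirely, stating that it follows by a straightforward modification of \cite[Lemma~9]{Strain-Guo-2008} and \cite[Lemmas~2.2--2.3]{Wang}, which is precisely the route you sketch. Your identification of the key technical points---the Leibniz expansion in $(t,x)$ and then in $\xi$, the absorption of polynomial factors from $\partial_\beta\sqrt\mu$ into $\mu^\varepsilon$, and the handling of the exponential weight $e^{q(t)\langle\xi\rangle^2}$ via the smallness $q(t)\ll 1$---is exactly what those references do.

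One minor slip: you twice write ``integrating in $x$ with Cauchy--Schwarz,'' but the inequalities \eqref{5.7} and \eqref{5.8} are stated with the $L^2_\xi$ inner product $\langle\cdot,\cdot\rangle$ and the norms $|\cdot|_2$, $|\cdot|_\sigma$, so they are pointwise in $(t,x)$ and no spatial integration is involved at this stage. The spatial integration and the associated H\"older/Sobolev bookkeeping happen later, when these pointwise-in-$x$ trilinear bounds are applied inside the energy estimates of Sections~\ref{sec.2}--\ref{sec.3}.
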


Next we prove some linear and nonlinear estimates, which are used in Sections \ref{sec.2} and \ref{sec.3}.
We first consider the estimates of the terms $\Gamma(\mathbf{g},\frac{M-\mu}{\sqrt{\mu}})$ and $\Gamma(\frac{M-\mu}{\sqrt{\mu}},\mathbf{g})$.
\begin{lemma}\label{lem5.7}
Let $|\alpha|+|\beta|\leq 2$ and  $0\leq q(t)\ll 1$ in $w=w(\beta)$ defined by \eqref{1.32}.
Suppose that \eqref{2.9}, \eqref{2.9a}  and \eqref{2.12} hold.
If we choose  $\eta_0>0$ in \eqref{2.12}, $\varepsilon_{0}>0$ in \eqref{2.9} and $\delta>0$ in \eqref{1.27} small enough, one has
\begin{eqnarray}
\label{5.29}
&&|(\partial^\alpha_\beta [v\Gamma(\frac{M-\mu}{\sqrt{\mu}},\mathbf{g})], w^2(\beta)  h)|
+|(\partial^\alpha_\beta [v\Gamma(\mathbf{g},\frac{M-\mu}{\sqrt{\mu}})],w^2(\beta)  h)|
\notag\\
&&\leq C(\eta_0+\delta+\varepsilon_{0})\|w(\beta)h\|_{\sigma}^2+C(\eta_0+\delta+\varepsilon_{0})\mathcal{D}_{2,l,q}(t),
\end{eqnarray}
and
\begin{eqnarray}
\label{5.30}
&&|(\partial^\alpha[v\Gamma(\frac{M-\mu}{\sqrt{\mu}},\mathbf{g})],h)|
+|(\partial^\alpha[v \Gamma(\mathbf{g},\frac{M-\mu}{\sqrt{\mu}})],h)|
\notag\\
&&\leq C(\eta_0+\delta+\varepsilon_{0})\|h\|_{\sigma}^2+C(\eta_0+\delta+\varepsilon_{0})\mathcal{D}_{2,l,q}(t).
\end{eqnarray}
\end{lemma}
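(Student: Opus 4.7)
The plan is to reduce the claimed bounds to the general bilinear estimates \eqref{5.7} and \eqref{5.8} by distributing derivatives via Leibniz, and then show that every factor involving $\frac{M-\mu}{\sqrt{\mu}}$ is small in the sense of containing at least one factor of the smallness parameter $\eta_0+\delta+\varepsilon_0$. First I would write
$$
\partial^\alpha_\beta\bigl[v\,\Gamma(f_1,f_2)\bigr] \;=\; \sum_{\alpha_1\le\alpha}C^{\alpha_1}_{\alpha}\,\partial^{\alpha_1} v\;\partial^{\alpha-\alpha_1}_\beta\Gamma(f_1,f_2),
$$
with $(f_1,f_2)$ being either $(\mathbf{g},\frac{M-\mu}{\sqrt{\mu}})$ or $(\frac{M-\mu}{\sqrt{\mu}},\mathbf{g})$, and then use the bilinearity of $\Gamma$ together with \eqref{5.8} to obtain pointwise-in-$x$ estimates of the form
$$
\bigl|\partial^{\alpha-\alpha_1}_\beta \Gamma(f_1,f_2)\bigr|\cdot |w(\beta)h|_\sigma \;\lesssim\; \sum |\mu^\varepsilon \partial^{\alpha_2}_{\bar\beta}f_1|_2\;|w(\beta)\partial^{\alpha-\alpha_1-\alpha_2}_{\beta-\beta_1}f_2|_\sigma\;|w(\beta)h|_\sigma.
$$

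The key step is then to control $\frac{M-\mu}{\sqrt{\mu}}$ and its derivatives. Writing $M_\tau=M_{[1+\tau(1/v-1),\,\tau u,\,3/2+\tau(\theta-3/2)]}$ and integrating $\partial_\tau$ over $[0,1]$, I get pointwise
$$
\Big|\partial^{\alpha'}_{\beta'}\frac{M-\mu}{\sqrt{\mu}}\Big|\;\le\; C\,P_{\alpha',\beta'}(\xi)\,\mu^{1/2-\varepsilon}\,\sum_{1\le |\alpha''|\le |\alpha'|}\bigl(|\partial^{\alpha''}[v,u,\theta]|+|v-1|+|u|+|\theta-\tfrac32|\bigr),
$$
where $P_{\alpha',\beta'}$ is a polynomial in $\xi$; hence $|\mu^\varepsilon \partial^{\alpha'}_{\beta'}\frac{M-\mu}{\sqrt{\mu}}|_2$ integrates the $\xi$-factor into a constant and leaves a pure macroscopic quantity. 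By \eqref{2.12} the zero-derivative factor is bounded by $\eta_0+\varepsilon_0$ in $L^\infty$, and by the a priori bounds \eqref{1.29} and \eqref{2.9} together with the Sobolev embedding $H^1(\R)\subset L^\infty(\R)$, the first-order spatial derivatives $\partial[v,u,\theta]=\partial[\bar{v},\bar{u},\bar\theta]+\partial[\widetilde{v},\widetilde{u},\widetilde{\theta}]$ are also $O(\delta+\varepsilon_0)$ in $L^\infty$.

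Next I would split the resulting $x$-integral by Hölder so that at most one factor—the one carrying the highest number of derivatives on $[\bar{v},\bar{u},\bar\theta]$ or $[\widetilde{v},\widetilde{u},\widetilde\theta]$—sits in $L^2_x$ and all the remaining macroscopic factors sit in $L^\infty_x$. The $L^\infty_x$ factors contribute a prefactor $\eta_0+\delta+\varepsilon_0$ by the discussion above, while the single $L^2_x$ factor of second-order derivative of $\widetilde{v},\widetilde{u},\widetilde\theta$ (or a $\delta\,(1+t)^{-1/2}$-bounded derivative of $\bar{v},\bar{u},\bar\theta$) is absorbed into $\mathcal{D}_{2,l,q}(t)$ as defined in \eqref{2.11}. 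The remaining microscopic factor $|w(\beta)\partial^\bullet_\bullet \mathbf{g}|_\sigma$ is a component of $\mathcal D_{2,l,q}(t)$, and $|w(\beta)h|_\sigma$ appears quadratically after Young's inequality, yielding the bound $C(\eta_0+\delta+\varepsilon_0)\|w(\beta)h\|_\sigma^2+C(\eta_0+\delta+\varepsilon_0)\mathcal D_{2,l,q}(t)$. The estimate \eqref{5.30} is proved in exactly the same way with \eqref{5.7} in place of \eqref{5.8} and no velocity weight.

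The only delicate point—and the step I would pay most attention to—is the case $|\alpha-\alpha_1-\alpha_2|+|\beta-\beta_1|=0$ and $|\alpha_2|+|\bar\beta|=|\alpha|+|\beta|=2$, where all derivatives land on $\frac{M-\mu}{\sqrt{\mu}}$: here the macroscopic factor is a second derivative of $[v,u,\theta]$, which must be kept in $L^2_x$, and I must make sure the microscopic factor $|w(\beta)\mathbf{g}|_\sigma$ (or $|w(\beta)h|_\sigma$) is estimated in $L^\infty_x$ via $H^1_x$ so that the resulting cubic structure closes with a $\|\mathbf{g}\|_{\sigma,w}^{1/2}\|\partial_x\mathbf{g}\|_{\sigma,w}^{1/2}\le \varepsilon_0^{1/2}\|\partial_x\mathbf{g}\|_{\sigma,w}^{1/2}$-type bound, which indeed leads to the $(\eta_0+\delta+\varepsilon_0)\mathcal{D}_{2,l,q}(t)$ right-hand side after a Young's inequality and use of the a priori assumption \eqref{2.9}.
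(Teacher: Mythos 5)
Your proposal is correct and follows essentially the same route as the paper: Leibniz expansion of $\partial^\alpha_\beta[v\Gamma(\cdot,\cdot)]$, application of \eqref{5.8} (resp. \eqref{5.7}), smallness of $\mu^\varepsilon\partial^{\alpha_2}_{\bar\beta}(\frac{M-\mu}{\sqrt{\mu}})$ in terms of $\eta_0+\varepsilon_0$ plus derivatives of $[v,u,\theta]$, and a case analysis placing the highest-order macroscopic factor in $L^2_x$ and the rest in $L^\infty_x$ via Sobolev embedding, including the delicate case $|\alpha_2|=2$ where $|w(\beta)\mathbf{g}|_\sigma$ must be taken in $L^\infty_x$. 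The only cosmetic difference is that you derive the pointwise control of $\partial^{\alpha'}_{\beta'}\frac{M-\mu}{\sqrt{\mu}}$ by a $\tau$-integral representation, while the paper splits the velocity integral into $|\xi|\geq R$ and $|\xi|\leq R$ to obtain the analogous bound \eqref{5.32}; both yield the same conclusion.
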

\begin{proof}
We only consider the first term on the left hand side of \eqref{5.29} while the second term can be handled in the same way.
Notice that
\begin{align*}
\partial^\alpha_\beta [v\Gamma(\frac{M-\mu}{\sqrt{\mu}},\mathbf{g})]=\sum_{|\alpha_1|\leq|\alpha|}
C^{\alpha_{1}}_{\alpha}\partial^{\alpha-\alpha_{1}}v\partial^{\alpha_{1}}_{\beta}\Gamma(\frac{M-\mu}{\sqrt{\mu}},\mathbf{g}).
\end{align*}
It follows from this and \eqref{5.8} that
\begin{align}
\label{5.31}
&|(\partial^\alpha_\beta [v\Gamma(\frac{M-\mu}{\sqrt{\mu}},\mathbf{g})], w^2(\beta)  h)|
\nonumber\\
&\leq C\sum_{|\alpha_2|\leq|\alpha_1|\leq|\alpha|}\sum_{ |\bar{\beta}|\leq|\beta_1|\leq|\beta|}
\underbrace{\int_{\mathbb R}|\partial^{\alpha-\alpha_{1}}v||\mu^\varepsilon\partial^{\alpha_2}_{\bar{\beta}}(\frac{M-\mu}{\sqrt{\mu}})|_2| w(\beta) \partial^{\alpha_{1}-\alpha_2}_{\beta-\beta_1}\mathbf{g}|_{\sigma}|  w(\beta)h|_{\sigma}\,dx}_{I_{1}}.
\end{align}
For any $\beta'\geq0$ and any $b>0$, from \eqref{1.32}, \eqref{1.35}, \eqref{2.9a} and \eqref{2.12}, there exists a small $\varepsilon_{1}>0$ such that
$$
| \langle \xi\rangle^{b}\partial _{\beta'}(\frac{M-\mu}{\sqrt{\mu}})|_{\sigma,w}^2+| \langle \xi\rangle^{b}\partial _{\beta'}(\frac{M-\mu}{\sqrt{\mu}})|_{2,w}^2\leq
C_b\sum_{|\beta'|\leq|\beta''|\leq|\beta'|+1}\int_{{\mathbb R}^3}\mu^{-\varepsilon_1}
|\partial _{\beta''}(\frac{M-\mu}{\sqrt{\mu}})|^2\,d\xi.
$$
For $\eta_{0}>0$ in \eqref{2.12}, there exists some large constant  $R>0$ such that
$$
\int_{|\xi|\geq R}\mu^{-\varepsilon_1}|\partial _{\beta''}(\frac{M-\mu}{\sqrt{\mu}})|^2 \,d\xi\leq C(\eta_0+\varepsilon_{0})^2,
$$
and
$$
\int_{|\xi|\leq R}\mu^{-\varepsilon_1}|\partial _{\beta''}(\frac{M-\mu}{\sqrt{\mu}})|^2 \,d\xi\leq C(|v-1|+|u|+|\theta-\frac{3}{2}|)^2\leq C(\eta_0+\varepsilon_{0})^2.
$$
Thus, for any $\beta'\geq0$ and $b>0$, we deduce from the above  estimates that
\begin{equation}
\label{5.32}
| \langle \xi\rangle^{b}\partial _{\beta'}(\frac{M-\mu}{\sqrt{\mu}})|_{\sigma,w}^2+| \langle \xi\rangle^{b}\partial _{\beta'}(\frac{M-\mu}{\sqrt{\mu}})|_{2,w}^2
 \leq C(\eta_0+\varepsilon_{0})^2.
\end{equation}
Note that $|\alpha_2|\leq|\alpha_1|\leq|\alpha|\leq 2$ in \eqref{5.31} since we  consider $|\alpha|+|\beta|\leq 2$.
If $|\alpha_2|=0$ and $|\alpha-\alpha_{1}|\leq\frac{|\alpha|}{2}$,
we have from  \eqref{5.32} and \eqref{2.11} that
\begin{align*}
I_{1}=&\int_{\mathbb R}|\partial^{\alpha-\alpha_{1}}v||\mu^\varepsilon\partial^{\alpha_2}_{\bar{\beta}}(\frac{M-\mu}{\sqrt{\mu}})|_2| w(\beta) \partial^{\alpha_{1}-\alpha_2}_{\beta-\beta_1}\mathbf{g}|_{\sigma}|  w(\beta)h|_{\sigma}\,dx
\\
&\leq C(\eta_0+\varepsilon_{0})(\|\partial^{\alpha-\alpha_{1}}\widetilde{v}\|_{L_{x}^{\infty}}
+\|\partial^{\alpha-\alpha_{1}}\bar{v}\|_{L_{x}^{\infty}})\|w(\beta)\partial^{\alpha_{1}-\alpha_2}_{\beta-\beta_1}\mathbf{g}\|_{\sigma}\| w(\beta)h\|_{\sigma}
\\
&\leq C(\eta_0+\varepsilon_{0})(\|w(\beta) h\|_{\sigma }^2+\|\partial^{\alpha_{1}-\alpha_2}_{\beta-\beta_1}\mathbf{g}\|^{2}_{\sigma,w})
\leq C(\eta_0+\varepsilon_{0})(\|w(\beta) h\|_{\sigma }^2+\mathcal{D}_{2,l,q}(t)),
\end{align*}
where   we have used  the facts that $w(\beta)\leq w(\beta-\beta_1)$
and
$$
\|\partial^{\alpha-\alpha_{1}}\widetilde{v}\|_{L_{x}^{\infty}}
+\|\partial^{\alpha-\alpha_{1}}\bar{v}\|_{L_{x}^{\infty}}\leq C,
$$
due to the imbedding inequality, \eqref{1.29} and \eqref{2.9}.
If $|\alpha_2|=0$ and $|\alpha-\alpha_{1}|>\frac{|\alpha|}{2}$, we have
\begin{align*}
I_{1}&\leq C(\eta_0+\varepsilon_{0})\int_{\mathbb R}|\partial^{\alpha-\alpha_{1}}v|| w(\beta) \partial^{\alpha_{1}-\alpha_2}_{\beta-\beta_1}\mathbf{g}|_{\sigma}|  w(\beta)h|_{\sigma}\,dx
\\
&\leq C(\eta_0+\varepsilon_{0})\|\partial^{\alpha-\alpha_{1}}v\|\Big\||w(\beta)\partial^{\alpha_{1}-\alpha_2}_{\beta-\beta_1}\mathbf{g}|_{\sigma}\Big\|_{L_{x}^{\infty}}\| w(\beta)h\|_{\sigma}
\\
&\leq C(\eta_0+\varepsilon_{0})\|w(\beta)\partial^{\alpha_{1}-\alpha_2}_{\beta-\beta_1}\mathbf{g}\|^{\frac{1}{2}}_{\sigma}
\|w(\beta)\partial^{\alpha_{1}-\alpha_2}_{\beta-\beta_1}\mathbf{g}_{x}\|^{\frac{1}{2}}_{\sigma}\| w(\beta)h\|_{\sigma}
\\
&\leq (\eta_0+\varepsilon_{0})\|w(\beta) h\|_{\sigma }^2+ C(\eta_0+\varepsilon_{0})\mathcal{D}_{2,l,q}(t).
\end{align*}
If $|\alpha_2|=1$, then $|\alpha-\alpha_{1}|\leq1$, we have from the imbedding inequality, \eqref{1.29}, \eqref{2.10} and \eqref{2.9}that
\begin{align*}
I_{1}&\leq C\|\partial^{\alpha-\alpha_{1}}v\|_{L_{x}^{\infty}}\|\partial^{\alpha_2}[v,u,\theta]\|_{L_{x}^\infty}
\| w(\beta)\partial^{\alpha_{1}-\alpha_2}_{\beta-\beta_1}\mathbf{g}\|_{\sigma}\|  w(\beta)  h\|_{\sigma}
\\
&\leq C\big(\delta+\sqrt{\mathcal{E}_{2,l,q}(t)}\big)\| w(\beta)\partial^{\alpha_{1}-\alpha_2}_{\beta-\beta_1}\mathbf{g}\|_{\sigma}\|w(\beta)h\|_{\sigma}
\\
&\leq C(\delta+\varepsilon_{0})\|  w(\beta)  h\|^{2}_{\sigma}+C(\delta+\varepsilon_{0})\mathcal{D}_{2,l,q}(t).
\end{align*}
If $|\alpha_2|=2$, then $|\alpha_1|=|\alpha|=2$, we can obtain
\begin{align*}
I_{1}&\leq C(\|\partial^{\alpha_2}[v,u,\theta]\|+\sum_{|\alpha'|=1}\|\partial^{\alpha'}[v,u,\theta]\|^{2})
\Big\||w(\beta)\partial^{\alpha-\alpha_1}_{\beta_{1}-\beta_2}\mathbf{g}|_{\sigma}\Big\|_{L_{x}^{\infty}}\|  w(\beta)  h\|_{\sigma}
\\
&\leq C(\delta+\varepsilon_{0})\|  w(\beta)  h\|^{2}_{\sigma}+C(\delta+\varepsilon_{0})\mathcal{D}_{2,l,q}(t).
\end{align*}
Hence, for $\eta_0>0$, $\delta>0$ and $\varepsilon_{0}>0$ small enough, we deduce from the above   estimates that
$$
|(\partial^\alpha_\beta[v \Gamma(\frac{M-\mu}{\sqrt{\mu}},\mathbf{g})],w^2(\beta)   h)|
\leq C(\eta_0+\delta+\varepsilon_{0})\big(\|w(\beta)h\|_{\sigma}^2+\mathcal{D}_{2,l,q}(t)\big).
$$
Similar arguments as the above give
\begin{equation*}
|(\partial^\alpha_\beta [v\Gamma(\mathbf{g},\frac{M-\mu}{\sqrt{\mu}})], w^2(\beta)  h)|
\leq C(\eta_0+\delta+\varepsilon_{0})\big(\|w(\beta)h\|_{\sigma}^2+\mathcal{D}_{2,l,q}(t)\big).
\end{equation*}
Estimate \eqref{5.29} thus follows from the above two estimates.  By \eqref{5.7} and  the similar calculations
as \eqref{5.29},  we can prove that \eqref{5.30} holds and we omit the details for brevity.
This completes the proof of Lemma \ref{lem5.7}.
\end{proof}
The following estimates are concerned with the nonlinear term $\Gamma(\frac{G}{\sqrt{\mu}},\frac{G}{\sqrt{\mu}})$.

\begin{lemma}\label{lem5.8}
Let $|\alpha|+|\beta|\leq 2$ and $0\leq q(t)\ll 1$ in $w=w(\beta)$ defined by \eqref{1.32}.
Suppose that \eqref{2.9}, \eqref{2.9a}  and \eqref{2.12} hold.
If we choose $\varepsilon_{0}>0$ in \eqref{2.9} and $\delta>0$ in \eqref{1.27} small enough, one has
\begin{equation}
\label{5.33}
|(\partial^\alpha_\beta[v\Gamma(\frac{G}{\sqrt{\mu}},\frac{G}{\sqrt{\mu}})], w^2(\beta) h)|
\leq C(\delta+\varepsilon_{0})\big(\|w(\beta)h\|_{\sigma}^2+\mathcal{D}_{2,l,q}(t)\big)+C\delta(1+t)^{-\frac{4}{3}},
\end{equation}
and
\begin{equation}
\label{5.34}
|(\partial^\alpha [v\Gamma(\frac{G}{\sqrt{\mu}},\frac{G}{\sqrt{\mu}})],h)|
\leq C(\delta+\varepsilon_{0})\big(\|h\|_{\sigma}^2+\mathcal{D}_{2,l,q}(t)\big)+C\delta(1+t)^{-\frac{4}{3}}.
\end{equation}
\end{lemma}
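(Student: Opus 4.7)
\medskip
\noindent\textbf{Proof proposal for Lemma \ref{lem5.8}.}
The plan is to first split $G=\overline{G}+\sqrt{\mu}\mathbf{g}$ so that by bilinearity of $\Gamma$,
$$
\Gamma\Bigl(\tfrac{G}{\sqrt{\mu}},\tfrac{G}{\sqrt{\mu}}\Bigr)
=\Gamma\Bigl(\tfrac{\overline{G}}{\sqrt{\mu}},\tfrac{\overline{G}}{\sqrt{\mu}}\Bigr)
+\Gamma\Bigl(\tfrac{\overline{G}}{\sqrt{\mu}},\mathbf{g}\Bigr)
+\Gamma\Bigl(\mathbf{g},\tfrac{\overline{G}}{\sqrt{\mu}}\Bigr)
+\Gamma(\mathbf{g},\mathbf{g}),
$$
and then distribute $\partial^\alpha_\beta$ via the Leibniz rule onto $v$ and the two arguments of $\Gamma$. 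For each resulting term one factor of $v$ (or its derivative) will be placed in $L^\infty_x$ using the imbedding inequality together with \eqref{2.9} and \eqref{1.29}; the $\Gamma$-piece will then be tested against $w^2(\beta)h$ via \eqref{5.8} (resp. \eqref{5.7} in the non-weighted case \eqref{5.34}), which reduces matters to pointwise $|\cdot|_\sigma$ and $|\mu^\varepsilon\,\cdot|_2$ bounds on the arguments.

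In the pure $\overline{G}\overline{G}$ contribution, both arguments produce factors proportional to $[\bar{u}_x,\bar{\theta}_x]$ and their derivatives through \eqref{5.38}. By \eqref{1.29} these decay as $(1+t)^{-1}$ in $L^\infty_x$ or $(1+t)^{-3/4}$ in $L^2_x$, and grouping two such factors yields the absorbed term $C\delta(1+t)^{-4/3}$ (with one half absorbed by Young's inequality against $\|w(\beta)h\|_\sigma$). In the mixed terms $\Gamma(\overline{G}/\sqrt{\mu},\mathbf{g})$ and $\Gamma(\mathbf{g},\overline{G}/\sqrt{\mu})$, the $\overline{G}$-factor carries a prefactor of size $\delta$ through \eqref{5.38} and \eqref{1.29}, while the $\mathbf{g}$-factor after any distribution of $\partial^\alpha_\beta$ is controlled in $\|\cdot\|_{\sigma,w(\beta)}$ by some $\|\partial^{\alpha'}_{\beta'}\mathbf{g}\|_{\sigma,w(\beta')}$, which is a piece of $\mathcal{D}_{2,l,q}(t)$; this yields $C\delta\,(\|w(\beta)h\|_\sigma^2+\mathcal{D}_{2,l,q}(t))$. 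In the pure nonlinear $\Gamma(\mathbf{g},\mathbf{g})$ piece, one argument is placed in $L^\infty_x L^2_\xi$ via $H^1_x$--embedding and then bounded by $\sqrt{\mathcal{E}_{2,l,q}(t)}\le\varepsilon_0$ through \eqref{2.9} and \eqref{2.10}, while the other is kept in $L^2_xL^2_\xi$ and absorbed into $\mathcal{D}_{2,l,q}(t)$. Since $|\alpha|+|\beta|\leq 2$ we split according to whether a given multi-index lands on the ``low'' or ``high'' factor exactly as in the proof of Lemma \ref{lem5.7}, so no new combinatorial difficulty arises.

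Two technical points need care. First, the weight $w(\beta)$ must be matched with the norm $|\cdot|_\sigma$ appearing in \eqref{5.8}, and the weight sum inequality $w(\beta)\le w(\beta-\beta_1)$ together with \eqref{2.9a} (which makes $q(t)$ uniformly bounded) is needed to move derivatives off $\mathbf{g}$ while keeping all $\mathbf{g}$-norms inside $\mathcal{F}_{2,l,q}(t)$ or $\mathcal{D}_{2,l,q}(t)$. Second, \eqref{5.8} requires the factor $\mu^{\varepsilon}$ on one argument; when that argument is $\overline{G}/\sqrt{\mu}$ we invoke the pointwise bound on $\overline{G}$ coming from \eqref{2.3} together with Lemma \ref{lem5.2}, which shows that $|\partial^{\alpha'}_{\beta'}(\overline{G}/\sqrt{\mu})|$ is dominated by a product of $[\bar{u}_x,\bar\theta_x]$-type factors and $M^{1-\varepsilon}/\sqrt\mu$, and the latter is controlled by $\mu^{-\varepsilon'}$ uniformly under \eqref{2.12}. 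The main obstacle is therefore the bookkeeping: one must ensure that in \emph{every} derivative split, at most one of the two $\overline{G}$ or $\mathbf{g}$ factors is placed in the $L^\infty_x$ seat, that the other is tested in $L^2_xL^2_\xi$ against a piece of $\mathcal{D}_{2,l,q}$, and that the smallness multiplier ($\delta$, $\varepsilon_0$, or $\delta(1+t)^{-4/3}$) is assigned to the right factor. Once this dictionary is fixed, both \eqref{5.33} and \eqref{5.34} follow by summation over the finitely many distributions, with \eqref{5.34} being strictly simpler because the weight $w(\beta)$ is dropped.
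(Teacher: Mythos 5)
Your proposal follows essentially the same route as the paper's proof: the same decomposition $G=\overline{G}+\sqrt{\mu}\mathbf{g}$ into four bilinear pieces, the same use of \eqref{5.7}--\eqref{5.8} together with the Burnett-function decay of Lemma \ref{lem5.2} (encoded in \eqref{5.37}--\eqref{5.38}) for the $\overline{G}$ factors, and the same $L^\infty_x$/$L^2_x$ bookkeeping with smallness supplied by $\delta$, $\varepsilon_0$ or the time decay \eqref{1.29}. The only slip is cosmetic: the rates $(1+t)^{-1}$ and $(1+t)^{-3/4}$ you quote are those of $\bar{u}_x$, whereas $\bar{\theta}_x$ decays only like $(1+t)^{-1/2}$ in $L^\infty_x$ and $(1+t)^{-1/4}$ in $L^2_x$, which is still ample to produce the $C\delta(1+t)^{-4/3}$ remainder after Young's inequality.
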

\begin{proof}
Recalling that  $G=\overline{G}+\sqrt{\mu}\mathbf{g}$, we   see
\begin{equation}
\label{5.35}
\Gamma(\frac{G}{\sqrt{\mu}},\frac{G}{\sqrt{\mu}})=\Gamma(\frac{\overline{G}}{\sqrt{\mu}},\frac{\overline{G}}{\sqrt{\mu}})
 +\Gamma(\frac{\overline{G}}{\sqrt{\mu}},\mathbf{g})+\Gamma(\mathbf{g},\frac{\overline{G}}{\sqrt{\mu}})+\Gamma(\mathbf{g},\mathbf{g}).
\end{equation}
For the first term in \eqref{5.35}, we have from the similar arguments as \eqref{5.31} that
\begin{multline}
\label{5.36}
|(\partial^\alpha_\beta [v\Gamma(\frac{\overline{G}}{\sqrt{\mu}},\frac{\overline{G}}{\sqrt{\mu}})], w^2(\beta)h)|\\
\leq C\sum_{|\alpha_2|\leq|\alpha_1|\leq|\alpha|}\sum_{ |\bar{\beta}|\leq|\beta_1|\leq|\beta|}
\underbrace{\int_{\mathbb R}|\partial^{\alpha-\alpha_{1}}v||\mu^\varepsilon\partial^{\alpha_2}_{\bar{\beta}}(\frac{\overline{G}}{\sqrt{\mu}})|_2| w(\beta) \partial^{\alpha_{1}-\alpha_2}_{\beta-\beta_1}(\frac{\overline{G}}{\sqrt{\mu}})|_{\sigma}|  w(\beta)h|_{\sigma}\,dx}_{I_{2}}.
\end{multline}
By \eqref{5.1} and \eqref{5.2}, we can rewrite $\overline{G}$ in \eqref{2.3} as
\begin{equation}
\label{5.19a}
\overline{G}(t,x,\xi)=\frac{1}{v}\frac{\sqrt{R}\;\overline{\theta}_x}{\sqrt{\theta}}A_1(\frac{\xi-u}{\sqrt{R\theta}})
+\frac{1}{v}\overline{u}_{1x}B_{11}(\frac{\xi-u}{\sqrt{R\theta}}),
\end{equation}
which implies that for  $\beta_1=(1,0,0)$,
\begin{equation*}
\partial_{\beta_1}\overline{G}=\frac{1}{v}\frac{\sqrt{R}\;\overline{\theta}_x}{\sqrt{\theta}}\partial_{\xi_1}A_1(\frac{\xi-u}{\sqrt{R\theta}})(\frac{1}{\sqrt{R\theta}})
+\frac{1}{v}\overline{u}_{1x}\partial_{\xi_1}B_{11}(\frac{\xi-u}{\sqrt{R\theta}})(\frac{1}{\sqrt{R\theta}}).
\end{equation*}
Similarly, we also have
\begin{align}
\label{5.20}
\overline{G}_x=&-\frac{v_{x}\,\sqrt{R}\;\overline{\theta}_x }{v^{2}\sqrt{\theta}}A_1(\frac{\xi-u}{\sqrt{R\theta}})
-\frac{v_{x}}{v^{2}}\overline{u}_{1x}B_{11}(\frac{\xi-u}{\sqrt{R\theta}})
\notag\\
&+\frac{1}{v}\frac{\sqrt{R}\;\overline{\theta}_{xx}}{\sqrt{\theta}}A_1(\frac{\xi-u}{\sqrt{R\theta}})
-\frac{1}{v}\frac{\sqrt{R}\;\overline{\theta}_{x}{\theta}_{x}}{2\sqrt{\theta^3}}A_1(\frac{\xi-u}{\sqrt{R\theta}})
\notag\\
&-\frac{1}{v}\frac{\sqrt{R}\;\overline{\theta}_{x}}{\sqrt{\theta}}\nabla_\xi A_1(\frac{\xi-u}{\sqrt{R\theta}})\cdot
\frac{u_x}{\sqrt{R\theta}} -\frac{1}{v}\frac{\sqrt{R}\;
\overline{\theta}_{x}{\theta}_{x}}{\sqrt{\theta}}\nabla_\xi A_1(\frac{\xi-u}{\sqrt{R\theta}})\cdot\frac{\xi-u}{2\sqrt{R\theta^3}}
\notag\\
&+\frac{1}{v}\overline{u}_{1xx}B_{11}(\frac{\xi-u}{\sqrt{R\theta}})
-\frac{1}{v}\frac{\overline{u}_{1x}u_x}{\sqrt{R\theta}}\cdot \nabla_\xi B_{11}(\frac{\xi-u}{\sqrt{R\theta}})
-\frac{1}{v}\frac{\overline{u}_{1x}\theta_x(\xi-u)}{2\sqrt{R\theta^3}}\cdot \nabla_\xi B_{11}(\frac{\xi-u}{\sqrt{R\theta}}).
\end{align}
And $\overline{G}_t$ has the similar expression as \eqref{5.20}.
For any $|\bar{\alpha}|\geq 1$ and $|\bar{\beta}|\geq 0$, we use the similar expansion as the above to get
\begin{equation}
\label{5.37}
| \langle \xi\rangle^{b}\partial _{\bar{\beta}}(\frac{\overline{G}}{\sqrt{\mu}})|_{2,w}+|\langle \xi\rangle^{b} \partial _{\bar{\beta}}(\frac{\overline{G}}{\sqrt{\mu}})|_{\sigma,w}
\leq C|[\overline{u}_x,\overline{\theta}_x]|,
\end{equation}
and
\begin{equation}
\label{5.38}
|\langle \xi\rangle^{b} \partial^{\bar{\alpha}}_{\bar{\beta}}(\frac{\overline{G}}{\sqrt{\mu}})|_{2,w}+| \langle \xi\rangle^{b} \partial^{\bar{\alpha}}_{\bar{\beta}}(\frac{\overline{G}}{\sqrt{\mu}})|_{\sigma,w}
\leq C(|\partial^{\bar{\alpha}}[\overline{u}_x,\overline{\theta}_x]|+...
+|[\overline{u}_x,\overline{\theta}_x]||\partial^{\bar{\alpha}}[v,u,\theta]|).
\end{equation}
Here we have used Lemma \ref{lem5.2} and the fact that $|\langle \xi\rangle^b w(\bar{\beta})\mu^{-\frac{1}{2}}M^{1-\varepsilon}|_2\leq C$
for any $b\ge0$ and any small $\varepsilon>0$ by \eqref{2.9a} and \eqref{2.12}.  

Note that $|\alpha_2|\leq|\alpha_1|\leq|\alpha|\leq 2$ in \eqref{5.36} due to the fact that $|\alpha|+|\beta|\leq 2$.
If $|\alpha-\alpha_{1}|\leq 1$, by using  \eqref{5.37}, \eqref{5.38}, \eqref{1.29}, \eqref{2.9}  and
the imbedding inequality, one has from \eqref{5.36} that
\begin{align*}
I_{2}&\leq C\|\partial^{\alpha-\alpha_{1}}v\|_{L_{x}^{\infty}}\int_{\mathbb R}
\big\{|\partial^{\alpha_{2}}[\overline{u}_x,\overline{\theta}_x]|+...
+|[\overline{u}_x,\overline{\theta}_x]||\partial^{\alpha_{2}}[v,u,\theta]|\big\}
\\
&\qquad\times\big\{|\partial^{\alpha_{1}-\alpha_{2}}[\overline{u}_x,\overline{\theta}_x]|+...
+|[\overline{u}_x,\overline{\theta}_x]||\partial^{\alpha_{1}-\alpha_{2}}[v,u,\theta]|\big\}
|w(\beta) h|_{\sigma}\,dx
\\
&\leq C(\delta+\varepsilon_{0})\big(\|w(\beta)h\|_{\sigma}^2+\mathcal{D}_{2,l,q}(t)\big)+C\delta(1+t)^{-\frac{4}{3}}.
\end{align*}
If $|\alpha-\alpha_{1}|=2$, then $|\alpha|=2$ and $|\alpha_{1}|=|\alpha_{2}|=0$, we have
\begin{align*}
I_{2}&\leq C\|[\overline{u}_x,\overline{\theta}_x]\|^{2}_{L_{x}^{\infty}}\int_{\mathbb R}|\partial^{\alpha-\alpha_{1}}v||w(\beta) h|_{\sigma}\,dx
\\
&\leq C\delta\|w(\beta) h\|_{\sigma}^2
+C\delta\mathcal{D}_{2,l,q}(t)+C\delta(1+t)^{-\frac{4}{3}}.
\end{align*}
It follows from the above two estimates and \eqref{5.36} that
\begin{equation}
\label{5.39}
|(\partial^\alpha_\beta [v\Gamma(\frac{\overline{G}}{\sqrt{\mu}},\frac{\overline{G}}{\sqrt{\mu}})], w^2(\beta)h)|
\leq C(\delta+\varepsilon_{0})\big(\|w(\beta)h\|_{\sigma}^2+\mathcal{D}_{2,l,q}(t)\big)+C\delta(1+t)^{-\frac{4}{3}}.
\end{equation}
For the second term in \eqref{5.35}, by \eqref{5.8}, we   can obtain
\begin{align}
\label{5.40c}
&|(\partial^\alpha_\beta [v\Gamma(\frac{\overline{G}}{\sqrt{\mu}},\mathbf{g})], w^2(\beta)h)|
\nonumber\\
&\leq C\sum_{|\alpha_2|\leq|\alpha_1|\leq|\alpha|}\sum_{ |\bar{\beta}|\leq|\beta_1|\leq|\beta|}
\underbrace{\int_{\mathbb R}|\partial^{\alpha-\alpha_{1}}v||\mu^\varepsilon\partial^{\alpha_2}_{\bar{\beta}}(\frac{\overline{G}}{\sqrt{\mu}})|_2| w(\beta) \partial^{\alpha_{1}-\alpha_2}_{\beta-\beta_1}\mathbf{g}|_{\sigma}|  w(\beta)h|_{\sigma}\,dx}_{I_{3}}.
\end{align}
Notice that $|\alpha_2|\leq|\alpha_1|\leq|\alpha|\leq 2$ in \eqref{5.40c}.
If $|\alpha-\alpha_{1}|\leq1$   and  $|\alpha_{2}|\leq1$,
we can deduce from  \eqref{5.37}, \eqref{5.38}, \eqref{1.29}, \eqref{2.9} and the imbedding inequality that
\begin{align*}
I_{3}&\leq C\Big\||\partial^{\alpha-\alpha_{1}}v|\big\{|\partial^{\alpha_{2}}[\overline{u}_x,\overline{\theta}_x]|
+|[\overline{u}_x,\overline{\theta}_x]||\partial^{\alpha_{2}}[v,u,\theta]|\big\}\Big\|_{L^{\infty}_{x}}
\int_{\mathbb R}|w(\beta)\partial^{\alpha_{1}-\alpha_2}_{\beta-\beta_1}\mathbf{g}|_{\sigma} |w(\beta)h|_{\sigma}\,dx
\\
&\leq C(\delta+\varepsilon_{0})\|w(\beta)h\|_{\sigma}^2+C(\delta+\varepsilon_{0})\mathcal{D}_{2,l,q}(t),
\end{align*}
where    we used the fact that $w(\beta)\leq w(\beta-\beta_1)$ due to \eqref{1.32}.

If $|\alpha-\alpha_{1}|\leq1$   and  $|\alpha_{2}|=2$, then $|\alpha|=|\alpha_{1}|=|\alpha_{2}|=2$ and we have
\begin{align*}
I_{3}&\leq C\Big\||w(\beta)\partial^{\alpha_{1}-\alpha_2}_{\beta-\beta_1}\mathbf{g}|_{\sigma}\Big\|_{L^{\infty}_{x}}
\int_{\mathbb R}\big\{|\partial^{\alpha_{2}}[\overline{u}_x,\overline{\theta}_x]|+\cdot\cdot\cdot
+|[\overline{u}_x,\overline{\theta}_x]||\partial^{\alpha_{2}}[v,u,\theta]|\big\} |w(\beta)h|_{\sigma}\,dx
\\
&\leq C(\delta+\varepsilon_{0})\|w(\beta)h\|_{\sigma}^2+C(\delta+\varepsilon_{0})\mathcal{D}_{2,l,q}(t).
\end{align*}
\\
If $|\alpha-\alpha_{1}|=2$ , then $|\alpha|=2$ and $|\alpha_{1}|=|\alpha_{2}|=0$, it follows that
\begin{align*}
I_{3}&\leq C\Big\||[\overline{u}_x,\overline{\theta}_x]||w(\beta) \partial^{\alpha_{1}-\alpha_2}_{\beta-\beta_1}\mathbf{g}|_{\sigma}\Big\|_{L^{\infty}_{x}}
\int_{\mathbb R}|\partial^{\alpha-\alpha_{1}}v||w(\beta) h|_{\sigma}\,dx
\\
&\leq C(\delta+\varepsilon_{0})\|w(\beta)h\|_{\sigma}^2+ C(\delta+\varepsilon_{0})\mathcal{D}_{2,l,q}(t).
\end{align*}
Owing to these, we can derive that
\begin{equation}
\label{5.41c}
|(\partial^\alpha_\beta [v\Gamma(\frac{\overline{G}}{\sqrt{\mu}},\mathbf{g})], w^2(\beta)h)|
\leq C(\delta+\varepsilon_{0})\big(\|w(\beta)h\|_{\sigma}^2+\mathcal{D}_{2,l,q}(t)\big).
\end{equation}
Similar arguments as \eqref{5.41c} imply
\begin{equation*}
|(\partial^\alpha_\beta [v\Gamma(\mathbf{g},\frac{\overline{G}}{\sqrt{\mu}})], w^2(\beta)h)|
\leq C(\delta+\varepsilon_{0})\big(\|w(\beta)h\|_{\sigma}^2+\mathcal{D}_{2,l,q}(t)\big).
\end{equation*}
By \eqref{5.8} and the similar calculations as \eqref{5.41c}, we can arrive at
\begin{align}
\label{5.42}
&|(\partial^\alpha_\beta [v\Gamma(\mathbf{g},\mathbf{g})],w^2(\beta)h)|
\nonumber\\
&\leq C\sum_{|\alpha_2|\leq|\alpha_1|\leq|\alpha|}\sum_{ |\bar{\beta}|\leq|\beta_1|\leq|\beta|}
\int_{\mathbb R}|\partial^{\alpha-\alpha_{1}}v||\mu^\varepsilon\partial^{\alpha_2}_{\bar{\beta}}\mathbf{g}|_2| w(\beta) \partial^{\alpha_{1}-\alpha_2}_{\beta-\beta_1}\mathbf{g}|_{\sigma}|  w(\beta)h|_{\sigma}\,dx
\nonumber\\
&\leq C(\delta+\varepsilon_{0})\|w(\beta)h\|_{\sigma}^2+C(\delta+\varepsilon_{0})\mathcal{D}_{2,l,q}(t).
\end{align}
By the estimates from \eqref{5.39} to \eqref{5.42}, one gets \eqref{5.33}.
We can follow the similar calculations as \eqref{5.39}-\eqref{5.42} to get \eqref{5.34}.
Therefore, the proof of Lemma \ref{lem5.8} is completed.
\end{proof}

\subsection{A technical lemma for weighted macro estimates}
Finally, we  will deduce  a crucial estimate to control the last term in \eqref{2.23} by using the system \eqref{2.7} and the properties of the viscous  contact wave profiles.
We first give the following  lemma, which can be found in \cite[Lemma 1]{Huang-Li-Matsumura}.
\begin{lemma}\label{lem5.5}
For $0<T\leq+\infty$, suppose that $h(t,x)$ satisfies
$$
h_{x}\in L^{2}(0,T;L^{2}(\mathbb{R})), \quad h_{t}\in L^{2}(0,T;H^{-1}(\mathbb{R})).
$$
Then the following estimate holds
\begin{align*}
\int^{T}_{0}\int_{\mathbb{R}} h^{2}\omega^{2}\,dxdt\leq 4\pi\|h(0)\|^{2}+4\pi\lambda^{-1}\int^{T}_{0}\|h_{x}(t)\|^{2}\,dt
+8\lambda\int^{T}_{0}( h_{t},h\mathfrak{g}^{2})\,dt,
\end{align*}
for some $\lambda>0$, where
\begin{align}
\label{5.10}
\omega(t,x)=(1+t)^{-\frac{1}{2}}\exp\big(-\frac{\lambda x^{2}}{1+t}\big), \quad \mathfrak{g}(t,x)=\int^{x}_{-\infty}\omega(t,y)\,dy.
\end{align}
\end{lemma}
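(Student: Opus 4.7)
The plan is to prove Lemma \ref{lem5.5} by exploiting the heat-equation structure of the weight $\omega$ and the antiderivative $\mathfrak{g}$. A direct computation gives
\begin{equation*}
\omega_t = \frac{1}{4\lambda}\omega_{xx},
\end{equation*}
since both sides equal $\omega\bigl(-\frac{1}{2(1+t)}+\frac{\lambda x^2}{(1+t)^2}\bigr)$. Integrating in $x$ from $-\infty$ and using $\mathfrak{g}_x=\omega$, I would then get the companion identity $\mathfrak{g}_t=\tfrac{1}{4\lambda}\mathfrak{g}_{xx}$. Combining these yields the pointwise algebraic identity that drives the whole argument:
\begin{equation*}
2\omega^{2}=(\mathfrak{g}^{2})_{xx}-4\lambda\,(\mathfrak{g}^{2})_{t},
\end{equation*}
obtained by writing $(\mathfrak{g}^{2})_{xx}=2\omega^{2}+2\mathfrak{g}\omega_{x}$ and $\mathfrak{g}\omega_{x}=4\lambda\mathfrak{g}\mathfrak{g}_{t}=2\lambda(\mathfrak{g}^{2})_{t}$.

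Next I would multiply this identity by $h^{2}$ and integrate over $(0,T)\times\mathbb{R}$. For the spatial piece, integration by parts twice (using $h\to 0$ in a suitable spatial sense, with $(\mathfrak{g}^{2})_{x}=2\mathfrak{g}\omega$) gives
\begin{equation*}
\int_{0}^{T}\!\!\int_{\mathbb{R}}h^{2}(\mathfrak{g}^{2})_{xx}\,dxdt=-4\int_{0}^{T}\!\!\int_{\mathbb{R}}hh_{x}\mathfrak{g}\omega\,dxdt.
\end{equation*}
For the temporal piece, integration by parts in $t$ gives
\begin{equation*}
\int_{0}^{T}\!\!\int_{\mathbb{R}}h^{2}(\mathfrak{g}^{2})_{t}\,dxdt=\bigl[\|h\mathfrak{g}\|^{2}\bigr]_{0}^{T}-2\int_{0}^{T}(h_{t},h\mathfrak{g}^{2})\,dt.
\end{equation*}
Putting these together, using Cauchy--Schwarz via $2|hh_{x}\mathfrak{g}\omega|\leq \tfrac{1}{2}h^{2}\omega^{2}+2h_{x}^{2}\mathfrak{g}^{2}$ to absorb half of $\int\!\int h^{2}\omega^{2}$ back on the left, and invoking the uniform bound $\mathfrak{g}^{2}\leq \pi/\lambda$ (since $\int_{\mathbb{R}}\omega(t,y)\,dy=\sqrt{\pi/\lambda}$) to control $\int h_{x}^{2}\mathfrak{g}^{2}$ by $(\pi/\lambda)\int h_{x}^{2}$ and $\|h(T)\mathfrak{g}(T)\|^{2}\geq 0$, $\|h(0)\mathfrak{g}(0)\|^{2}\leq (\pi/\lambda)\|h(0)\|^{2}$, will yield exactly
\begin{equation*}
\int_{0}^{T}\!\!\int_{\mathbb{R}}h^{2}\omega^{2}\,dxdt\leq 4\pi\|h(0)\|^{2}+4\pi\lambda^{-1}\!\int_{0}^{T}\|h_{x}\|^{2}\,dt+8\lambda\!\int_{0}^{T}(h_{t},h\mathfrak{g}^{2})\,dt.
\end{equation*}

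The main obstacle is rigorously justifying the integration by parts under the given low regularity: $h_{x}\in L^{2}(0,T;L^{2})$ and $h_{t}\in L^{2}(0,T;H^{-1})$. In particular, $(h_{t},h\mathfrak{g}^{2})$ has to be read as the $H^{-1}$--$H^{1}$ duality pairing (note $h\mathfrak{g}^{2}\in H^{1}$ because $\mathfrak{g}$ is smooth bounded and $h_{x}\in L^{2}$), and the identity $\tfrac{d}{dt}\|h\mathfrak{g}\|^{2}=2(h_{t},h\mathfrak{g}^{2})+\int h^{2}(\mathfrak{g}^{2})_{t}$ must be established via a standard approximation of $h$ by smooth functions $h^{\varepsilon}$ (e.g.\ Friedrichs mollification in $(t,x)$, possibly combined with a spatial cutoff to ensure decay at $x=\pm\infty$), then passing to the limit using the dominated convergence and the uniform boundedness of $\mathfrak{g}$, $\omega$. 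Once this weak-sense chain rule is in place, every manipulation above is routine and the stated inequality follows with no further effort.
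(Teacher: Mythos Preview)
Your proof is correct and complete. The paper does not actually prove Lemma~\ref{lem5.5}; it simply cites \cite[Lemma~1]{Huang-Li-Matsumura}, and your argument---based on the heat-equation identity $4\lambda\mathfrak{g}_t=\omega_x$, the algebraic relation $2\omega^{2}=(\mathfrak{g}^{2})_{xx}-4\lambda(\mathfrak{g}^{2})_{t}$, integration by parts, and the bound $\mathfrak{g}^{2}\le\pi/\lambda$---is precisely the standard proof given in that reference, with the constants matching exactly.
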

 The following lemma  is used to deal with the last term in \eqref{2.23}.
\begin{lemma}\label{lem5.6}
For  $\lambda\in(0,c_{1}/4]$ with $c_{1}$ in \eqref{1.27} and $\omega$ defined in \eqref{5.10}, if \eqref{2.9}  holds,  there exists $C_{2}>0$ 
 such that the following estimate holds
\begin{align}
\label{5.11}
\int^{t}_{0}\int_{\mathbb{R}}&(\widetilde{v}^{2}+\widetilde{u}^{2}+\widetilde{\theta}^{2})\omega^{2}\,dxds
\nonumber\\
&\leq C_{2}+C_{2}\varepsilon_{0} \int^{t}_{0} \|\mathbf{g}\|^{2}_{\sigma}\, ds
+C_{2}\sum_{|\alpha|=1}\int^{t}_{0}\big(\|\partial^{\alpha}[\widetilde{v},\widetilde{u},\widetilde{\theta}]\|^{2}
+\|\partial^{\alpha}\mathbf{g}\|^{2}_{\sigma} \big)\,ds.
\end{align}
\end{lemma}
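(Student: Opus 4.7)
The plan is to apply Lemma \ref{lem5.5} separately to $h=\widetilde v$, $h=\widetilde u_i$ $(i=1,2,3)$, and $h=\widetilde\theta$, using the equations in \eqref{2.7} to rewrite $h_t$, and then to control the resulting $(h_t,h\mathfrak g^2)$ terms by integration by parts together with the fact that $|\mathfrak g(t,x)|\leq\sqrt{\pi/\lambda}$ is uniformly bounded and $\mathfrak g_x=\omega$. The initial data contribution $4\pi\|h(0)\|^2$ is absorbed into $C_2$ via \eqref{2.9}. The $4\pi\lambda^{-1}\int_0^t\|h_x\|^2\,ds$ is already of the form present on the right side of \eqref{5.11}. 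So the whole effort is to estimate $8\lambda\int_0^t(h_t,h\mathfrak g^2)\,ds$ for each of the five perturbations.

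The simplest case is $h=\widetilde v$, where \eqref{2.24}$_1$ gives $\widetilde v_t=\widetilde u_{1x}$, hence $(h_t,h\mathfrak g^2)=(\widetilde u_{1x},\widetilde v\mathfrak g^2)=-(\widetilde u_1,\widetilde v_x\mathfrak g^2)-2(\widetilde u_1,\widetilde v\mathfrak g\omega)$; the first term is dominated by $C(\|\widetilde u_1\|^2+\|\widetilde v_x\|^2)$ using boundedness of $\mathfrak g$, while the second is bounded by $\epsilon\int \widetilde v^2\omega^2+C_\epsilon\|\widetilde u_1\|^2$ after Cauchy--Schwarz. For $h=\widetilde u_i$ and $h=\widetilde\theta$ we substitute $\widetilde u_{it}$ and $\widetilde\theta_t$ from \eqref{2.7}$_{2,3,4}$ and integrate by parts to move one $x$-derivative from the highest-order quantities onto $h\mathfrak g^2$. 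Typical outputs are: (i) terms $(p-p_+)_x$-type contributions produce quadratic expressions like $(\widetilde v_x+\widetilde\theta_x,\widetilde u_1\mathfrak g^2)$ plus $(\widetilde v+\widetilde\theta,\widetilde u_1\mathfrak g\omega)$, bounded by $C\sum_{|\alpha|=1}\|\partial^\alpha[\widetilde v,\widetilde u,\widetilde\theta]\|^2+\epsilon\int(\widetilde v^2+\widetilde\theta^2)\omega^2$; (ii) viscous/heat-conductive terms $(\frac{\mu(\theta)}{v}u_{1x})_x$, etc., give after integration by parts quantities controlled by $C\sum_{|\alpha|=1}\|\partial^\alpha\widetilde u\|^2+\epsilon\int\widetilde u^2\omega^2$ plus wave-profile contributions that are time-integrable thanks to \eqref{1.29}; (iii) $\bar u_{1t}$-type source terms are absorbed by $\int_0^\infty\|\bar u_{1t}\|_{L^1_x}\|h\|_{L^\infty_x}\,ds\leq C\delta$.

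The technical heart of the proof lies in the microscopic contributions $-\int_{\R^3}\xi_1\xi_iL^{-1}_M\Theta_1\,d\xi)_x$ and $-\frac12\int_{\R^3}\xi_1|\xi|^2L^{-1}_M\Theta_1\,d\xi)_x$ appearing in $\widetilde u_t$ and $\widetilde\theta_t$. Here the Burnett function representation \eqref{5.15}--\eqref{5.16} converts these objects into $\int_{\R^3} A_1(\frac{\xi-u}{\sqrt{R\theta}})\Theta_1/M\,d\xi$ and $\int_{\R^3} B_{1i}(\frac{\xi-u}{\sqrt{R\theta}})\Theta_1/M\,d\xi$, where the Burnett functions enjoy the rapid velocity decay \eqref{5.18}. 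Recalling $G=\overline G+\sqrt\mu\mathbf g$ and $\Theta_1=G_t-\frac{u_1}{v}G_x+\frac{1}{v}P_1(\xi_1G_x)-Q(G,G)$, one integrates by parts to move the outer $\partial_x$ onto $h\mathfrak g^2$, then the remaining terms are bounded in the same manner as done for \eqref{2.19}--\eqref{2.23a}: the $\overline G$-pieces yield $O(\delta)$ times integrable time-decay factors, the $\sqrt\mu\mathbf g$-pieces yield (after Cauchy--Schwarz and \eqref{1.35}) terms of the type $C\sum_{|\alpha|\leq 1}\|\partial^\alpha\mathbf g\|_\sigma^2$, and the quadratic $Q(G,G)$-piece using \eqref{5.7}--\eqref{5.34} is bounded by $C\varepsilon_0\|\mathbf g\|^2_\sigma$ plus integrable contact-wave terms. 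This is precisely the mechanism that produces the tolerated right-hand-side entries $C_2\varepsilon_0\int\|\mathbf g\|^2_\sigma\,ds$ and $C_2\sum_{|\alpha|=1}\int\|\partial^\alpha\mathbf g\|^2_\sigma\,ds$.

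The main obstacle is to prevent the appearance of the quantity $\int(\widetilde v^2+\widetilde u^2+\widetilde\theta^2)\omega^2\,dxds$ itself on the right with an uncontrollable coefficient, which would destroy the closure. This is why $\lambda$ is fixed a priori in the range $(0,c_1/4]$: with that choice, every time a cross-term of the form $(h_1 h_2,\mathfrak g\omega)$ is split by Cauchy--Schwarz, the absolute-value constant multiplying $\int h^2\omega^2$ can be made arbitrarily small (say, $\leq 1/(16\lambda)$) by taking the $\epsilon$ of Cauchy--Schwarz small, after which the corresponding piece is absorbed into the left-hand side of the estimate from Lemma \ref{lem5.5} before summing the three inequalities. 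The wave-profile remainders are integrable in time (order $(1+t)^{-1-\vartheta}$ from \eqref{1.29}), contributing to the constant $C_2$. Combining all of these bounds and summing over the five components of $[\widetilde v,\widetilde u,\widetilde\theta]$ yields \eqref{5.11}.
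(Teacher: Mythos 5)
Your toolbox is the right one---Lemma \ref{lem5.5} together with the Burnett-function representation \eqref{5.15}--\eqref{5.16} and the decay \eqref{5.18} for the $L^{-1}_{M}\Theta_{1}$ terms---but the component-by-component application of Lemma \ref{lem5.5} to $\widetilde v$, $\widetilde u_1$ and $\widetilde\theta$ separately does not close, and this is the heart of the lemma. The obstruction is not the weighted quantity $\int h^2\omega^2$ reappearing on the right (which you correctly propose to absorb by smallness), but the \emph{unweighted zeroth-order} norms that your Cauchy--Schwarz splittings produce. For $h=\widetilde v$ you bound $(\widetilde u_1,\widetilde v_x\mathfrak g^2)$ by $C(\|\widetilde u_1\|^2+\|\widetilde v_x\|^2)$, and the term $(\widetilde u_1,\widetilde v\,\mathfrak g\,\omega)$ by $\epsilon\int\widetilde v^2\omega^2+C_\epsilon\|\widetilde u_1\|^2$; for $h=\widetilde u_1$ the pressure term $-((p-p_+)_x,\widetilde u_1\mathfrak g^2)$ likewise yields, after integration by parts and using $p-p_+=\frac{R\widetilde\theta-p_+\widetilde v}{v}$, terms controlled only by $\|\widetilde v\|^2+\|\widetilde\theta\|^2$ or $\|\widetilde u_1\|^2$, because $\mathfrak g$ is merely bounded, not decaying. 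Since the weight $\omega$ enters the cross term to the first power only, no splitting can place $\omega^2$ on both factors, and replacing $\omega^2$ by $\omega$ on each factor still gives $(1+t)^{-1/2}\|h\|^2$, which is not integrable. These zeroth-order macroscopic norms are controlled by $\mathcal E_{2,l,q}$ but appear neither in $\mathcal D_{2,l,q}$ of \eqref{2.11} nor on the right of \eqref{5.11}; thus $\int_0^t\|\widetilde u_1(s)\|^2\,ds$ may grow linearly in $t$ and cannot be absorbed. Your final paragraph's claim that every cross term can be handled by choosing the Cauchy--Schwarz $\epsilon$ small is therefore incorrect.

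The paper circumvents this with the structure borrowed from \cite{Huang-Li-Matsumura}, which your proposal does not use. First, $\widetilde u_1$ and one linear combination of $\widetilde v,\widetilde\theta$ are \emph{not} treated by Lemma \ref{lem5.5} at all: one tests the momentum equation \eqref{2.7}$_{2}$ against $(\frac23\widetilde\theta-p_+\widetilde v)v\mathfrak f$ with $\mathfrak f=\int_{-\infty}^x\omega^2\,dy$, so that the pressure gradient produces the positive quantity $\frac12\int(\frac23\widetilde\theta-p_+\widetilde v)^2\omega^2$ directly on the left of \eqref{5.13}, while the term $(\widetilde u_{1t},\cdot)$, after moving $\partial_t$, invoking \eqref{5.14} and using $\widetilde v_t=\widetilde u_{1x}$, yields the good-signed contribution $-\frac{5}{12}p_+\int v\widetilde u_1^2\omega^2$ in \eqref{5.40}; this is how $\int\widetilde u_1^2\omega^2$ and one combination of $\widetilde v,\widetilde\theta$ are controlled simultaneously without ever estimating a dangerous cross term. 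Second, Lemma \ref{lem5.5} is applied only to the complementary combination $h=\frac23(\widetilde\theta+p_+\widetilde v)$, whose time derivative \eqref{5.14} contains no pressure gradient---only $u_{1x}$ multiplied by the already-controlled combination, plus conservative, dissipative and microscopic terms---and even there the leading contribution must be rewritten via $\widetilde u_{1x}=\widetilde v_t$ as an exact time derivative as in \eqref{5.42aa} rather than estimated by Cauchy--Schwarz. Only the transverse components $\widetilde u_2,\widetilde u_3$, whose equations are free of pressure coupling, admit the direct treatment you describe. Your handling of the microscopic $L^{-1}_M\Theta_1$ terms and of the wave-profile remainders is consistent with the paper, but without the two special test functions above the macroscopic part of the argument cannot be closed.
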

\begin{proof}
As in \cite{Huang-Li-Matsumura}, we define
\begin{align}
\label{5.12}
\mathfrak{f}(t,x)=\int_{-\infty}^{x}\omega^{2}(t,y)\,dy.
\end{align}
It is easy to check that
\begin{align}
\label{5.12a}
\|\mathfrak{f}(t,x)\|_{L_{x}^{\infty}}\leq 2\lambda^{-\frac{1}{2}}(1+t)^{-\frac{1}{2}}, \quad
\|\mathfrak{f}_{t}(t,x)\|_{L_{x}^{\infty}}\leq 4\lambda^{-\frac{1}{2}}(1+t)^{-\frac{3}{2}}.
\end{align}
Taking the inner product of \eqref{2.7}$_{2}$ with $(\frac{2}{3}\widetilde{\theta}-p_{+}\widetilde{v})v\mathfrak{f}$ with respect to $x$ over $\mathbb{R}$ and
using the fact that $p-p_{+}=\frac{2\widetilde{\theta}-3p_{+}\widetilde{v}}{3v}$, the integration by parts and \eqref{5.12}, we have
\begin{align}
\label{5.13}
\frac{1}{2}\big((\frac{2}{3}\widetilde{\theta}-p_{+}\widetilde{v})^{2},\omega^{2}\big)
&=\big(\widetilde{u}_{1t},(\frac{2}{3}\widetilde{\theta}-p_{+}\widetilde{v})v\mathfrak{f}\big)-
\big(\frac{1}{v}(\frac{2}{3}\widetilde{\theta}-p_{+}\widetilde{v})^{2},v_{x}\mathfrak{f}\big)
\nonumber\\
&\qquad+\big(\frac{4}{3}\frac{\mu(\theta)}{v}u_{1x},
[(\frac{2}{3}\widetilde{\theta}-p_{+}\widetilde{v})v\mathfrak{f}]_{x}\big)
+\big(\bar{u}_{1t},(\frac{2}{3}\widetilde{\theta}-p_{+}\widetilde{v})v\mathfrak{f}\big)
\nonumber\\
&\qquad-\big(\int_{\mathbb{R}^{3}} \xi^{2}_{1}L^{-1}_{M}\Theta_{1}\, d\xi,[(\frac{2}{3}\widetilde{\theta}-p_{+}\widetilde{v})v\mathfrak{f}]_{x}\big)
:=\sum_{i=4}^{8}I_{i}.
\end{align}
By \eqref{5.13}, the proof of \eqref{5.11} is similar to \cite[Lemma 5]{Huang-Li-Matsumura} for the stability of viscous contact wave for the compressible Navier-Stokes system.  Here the difference   is that we need to estimate the terms involving  $L^{-1}_{M}$ additionally.  For completeness, we will estimate each term in \eqref{5.13}. For the term $I_4$ in \eqref{5.13}, we see
\begin{align}\label{5.13a}
I_{4}&=\big(\widetilde{u}_{1},(\frac{2}{3}\widetilde{\theta}-p_{+}\widetilde{v})v\mathfrak{f}\big)_{t}
-\big(\widetilde{u}_{1},(\frac{2}{3}\widetilde{\theta}-p_{+}\widetilde{v})_{t}v\mathfrak{f}\big)\notag\\
&\qquad-\big(\widetilde{u}_{1},(\frac{2}{3}\widetilde{\theta}-p_{+}\widetilde{v})v_{t}\mathfrak{f}\big)
-\big(\widetilde{u}_{1},(\frac{2}{3}\widetilde{\theta}-p_{+}\widetilde{v})v\mathfrak{f}_{t}\big).
\end{align}
By using \eqref{2.7}$_{1}$ and  \eqref{2.7}$_{4}$, one has
\begin{align}
\label{5.14}
(\widetilde{\theta}+p_{+}\widetilde{v})_{t}=&-(\frac{\frac{2}{3}\widetilde{\theta}-p_{+}\widetilde{v}}{v})u_{1x}
+\big(\frac{\kappa(\theta)}{v}\theta_{x}-\frac{\kappa(\bar{\theta})}{\bar{v}}\bar{\theta}_{x}\big)_{x}
+Q_{1}
\nonumber\\
&\hspace{2cm}+u\cdot\int_{\mathbb{R}^{3}} \xi\xi_{1}(L^{-1}_{M}\Theta_{1})_{x}\,d\xi
-\frac{1}{2}\int_{\mathbb{R}^{3}}\xi_{1}|\xi|^{2}(L^{-1}_{M}\Theta_{1})_{x} \,d\xi.
\end{align}
For the second term on the right hand side of \eqref{5.13a}, by this,  \eqref{2.7}$_{1}$ and the integration by parts,  one has
\begin{align}
\label{5.32A}
&-\big(\widetilde{u}_{1},(\frac{2}{3}\widetilde{\theta}-p_{+}\widetilde{v})_{t}v\mathfrak{f}\big)=
\frac{5}{3}p_{+}\big(\widetilde{u}_{1},\widetilde{v}_{t}v\mathfrak{f}\big)
-\frac{2}{3}\big(\widetilde{u}_{1},(\widetilde{\theta}+p_{+}\widetilde{v})_{t}v\mathfrak{f}\big)
\nonumber\\
&=\frac{5}{3}p_{+}\big(\widetilde{u}_{1},\widetilde{v}_{t}v\mathfrak{f}\big)+\frac{2}{3}\big(\widetilde{u}_{1},(\frac{2}{3}\widetilde{\theta}-p_{+}\widetilde{v})u_{1x}\mathfrak{f}\big)
+\frac{2}{3}\big(\frac{\kappa(\theta)}{v}\theta_{x}-\frac{\kappa(\bar{\theta})}{\bar{v}}\bar{\theta}_{x},(\widetilde{u}_{1}v\mathfrak{f})_{x}\big)
-\frac{2}{3}\big(\widetilde{u}_{1}v\mathfrak{f},Q_{1}\big)\nonumber\\
&\quad+\frac{2}{3}\big(\widetilde{u}_{1}v\mathfrak{f},u_{x}\cdot\int_{\mathbb{R}^{3}} \xi\xi_{1}L^{-1}_{M}\Theta_{1}\, d\xi\big)
-\frac{2}{3}\big((\widetilde{u}_{1}v\mathfrak{f})_{x},\int_{\mathbb{R}^{3}}(\frac{1}{2}\xi_{1}|\xi|^{2}-u\cdot \xi\xi_{1})L^{-1}_{M}\Theta_{1} \,d\xi\big).
\end{align}

For the first term on the right hand side of \eqref{5.32A}, we have from \eqref{2.7}$_{1}$, \eqref{1.27}, \eqref{5.12}, \eqref{2.17} and the integration by parts that
\begin{align*}
\frac{5}{3}p_{+}\big(\widetilde{u}_{1},\widetilde{v}_{t}v\mathfrak{f}\big)&=\frac{5}{3}p_{+}\big(\widetilde{u}_{1},\widetilde{u}_{1x}v\mathfrak{f}\big)
=\frac{5}{6}p_{+}\big((\widetilde{u}_{1}^2)_x, v\mathfrak{f}\big)\\
&= -\frac{5}{6}p_{+}\int_{\mathbb{R}}v\widetilde{u}_{1}^{2}\omega^{2}\,dx
-\frac{5}{6}p_{+}\int_{\mathbb{R}} \widetilde{u}_{1}^{2}(\bar{v}_{x}+\widetilde{v}_{x})\mathfrak{f}\,dx
\\
&\leq  -\frac{5}{6}p_{+}\int_{\mathbb{R}}v\widetilde{u}_{1}^{2}\omega^{2}\,dx
+C\delta\int_{\mathbb{R}}\widetilde{u}_{1}^{2}\omega^{2}\,dx
+C(\|\widetilde{v}_{x}\|^{2}+\|\widetilde{u}_{x}\|^{2})+C(1+t)^{-\frac{3}{2}}.
\end{align*}
By using \eqref{2.9}, \eqref{5.12a} and \eqref{1.29}, one has
\begin{align*}
&\frac{2}{3}\big(\widetilde{u}_{1},(\frac{2}{3}\widetilde{\theta}-p_{+}\widetilde{v})u_{1x}\mathfrak{f}\big)
+\frac{2}{3}\big(\frac{\kappa(\theta)}{v}\theta_{x}-\frac{\kappa(\bar{\theta})}{\bar{v}}\bar{\theta}_{x},(\widetilde{u}_{1}v\mathfrak{f})_{x}\big)
\\
&\leq C(1+t)^{-\frac{1}{2}}\|\widetilde{u}_{1}\|_{L_{x}^{\infty}} (\|\widetilde{v}\|+\|\widetilde{\theta}\| )\times
 (\|\widetilde{u}_{1x}\|+\|\bar{u}_{1x}\| )
\\
&\qquad+C \{\|\widetilde{\theta}_{x}\|+(1+t)^{-\frac{1}{2}}(\|\widetilde{\theta}\|+\|\widetilde{v}\|) \}\times
 \{(1+t)^{-\frac{1}{2}}(\|\widetilde{u}_{1x}\|+\|\widetilde{u}_{1}v_{x}\|)+(1+t)^{-1}\|\widetilde{u}_{1}\| \}
\\
&\leq  C(\|\widetilde{v}_{x}\|^{2}+\|\widetilde{u}_{x}\|^{2}+\|\widetilde{\theta}_{x}\|^{2})+C(1+t)^{-\frac{3}{2}}.
\end{align*}
By the expression of $Q_{1}$ in \eqref{2.8}, one gets from \eqref{5.12a} and  \eqref{1.29} that
\begin{equation*}
-\frac{2}{3}\big(\widetilde{u}_{1}v\mathfrak{f},Q_{1}\big)
\leq C(1+t)^{-\frac{1}{2}}\|\widetilde{u}_{1}\|_{L_{x}^{\infty}}\big(\|\widetilde{u}_{x}\|^{2}+\|\bar{u}_{x}\|^{2}\big)
\leq C\|\widetilde{u}_{x}\|^{2}+C(1+t)^{-\frac{3}{2}}.
\end{equation*}
For the last term in \eqref{5.32A}, we have from \eqref{5.15} that
\begin{multline}
\label{5.17}
-\frac{2}{3}\big((\widetilde{u}_{1}v\mathfrak{f})_{x},\int_{\mathbb{R}^{3}}(\frac{1}{2}\xi_{1}|\xi|^{2}-u\cdot \xi\xi_{1})L^{-1}_{M}\Theta_{1}\,d\xi\big)\\
=-\frac{2}{3}\int_{\mathbb{R}}\Big\{(\widetilde{u}_{1}v\mathfrak{f})_{x}
(R\theta)^{\frac{3}{2}}\int_{\mathbb{R}^{3}}A_{1}(\frac{\xi-u}{\sqrt{R\theta}})\frac{\Theta_{1}}{M}\, d\xi\Big\}\,dx.
\end{multline}
Recall $\Theta_{1}$ in \eqref{1.22} that
\begin{equation*}
\Theta_{1}=G_{t}-\frac{u_{1}}{v}G_{x}+\frac{1}{v}P_{1}(\xi_{1}G_{x})-Q(G,G).
\end{equation*}
Recalling that $G=\overline{G}+\sqrt{\mu}\mathbf{g}$, by using \eqref{5.20}, \eqref{5.18}, \eqref{1.29},
\eqref{5.12} and the imbedding inequality, we have
\begin{align}
\label{5.21}
&-\frac{2}{3}\int_{\mathbb{R}}\Big\{(\widetilde{u}_{1}v\mathfrak{f})_{x}
(R\theta)^{\frac{3}{2}}\int_{\mathbb{R}^{3}}A_{1}(\frac{\xi-u}{\sqrt{R\theta}})\frac{\overline{G}_{t}}{M}\, d\xi\Big\}\,dx
\nonumber\\
&\leq C\{(1+t)^{-\frac{1}{2}}(\|\widetilde{u}_{1x}\|+\|v_{x}\|)+(1+t)^{-1}\|\widetilde{u}_{1}\|\}
\times\big(\int_{\mathbb{R}}\int_{\mathbb{R}^{3}}|\frac{\overline{G}_{t}}{\sqrt{\mu}}|^{2}\,d\xi dx\big)^{\frac{1}{2}}
\nonumber\\
&\leq C\{(1+t)^{-\frac{1}{2}}(\|\widetilde{u}_{x}\|+\|\widetilde{v}_{x}\|
+\|\bar{v}_{x}\|)+(1+t)^{-1}\|\widetilde{u}\|\}
\nonumber\\
&\hspace{2cm}\times \{\|[\bar{u}_{1xt},\bar{\theta}_{xt}]\|+\|[\bar{u}_{1x},\bar{\theta}_{x}]\cdot[v_{t},u_{t},\theta_{t}]\|\}
\nonumber\\
&\leq C\|[\widetilde{v}_{x},\widetilde{u}_{x},\widetilde{\theta}_{x}]\|^{2}
+C\|[\widetilde{v}_{t},\widetilde{u}_{t},\widetilde{\theta}_{t}]\|^{2}+C(1+t)^{-\frac{4}{3}}.
\end{align}
Similarly, it holds that
\begin{align}
\label{5.22}
&-\frac{2}{3}\int_{\mathbb{R}}\Big\{(\widetilde{u}_{1}v\mathfrak{f})_{x}
(R\theta)^{\frac{3}{2}}\int_{\mathbb{R}^{3}}A_{1}(\frac{\xi-u}{\sqrt{R\theta}})\frac{\sqrt{\mu}\mathbf{g}_{t}}{M} d\xi\Big\}\,dx
\nonumber\\
&\leq C\{(1+t)^{-\frac{1}{2}}(\|\widetilde{u}_{1x}\|+\|v_{x}\|)+(1+t)^{-1}\|\widetilde{u}_{1}\|\}
\times\|\langle \xi\rangle^{-\frac{1}{2}}\mathbf{g}_{t}\|
\nonumber\\
&\leq C\|[\widetilde{v}_{x},\widetilde{u}_{x},\widetilde{\theta}_{x}]\|^{2}+C\|\mathbf{g}_{t}\|^{2}_{\sigma}
+C(1+t)^{-\frac{4}{3}}.
\end{align}
It follows from \eqref{5.21} and \eqref{5.22} that
\begin{align}
\label{5.23}
&-\frac{2}{3}\int_{\mathbb{R}}\Big\{(\widetilde{u}_{1}v \mathfrak{{f}})_{x}
(R\theta)^{\frac{3}{2}}\int_{\mathbb{R}^{3}}A_{1}(\frac{\xi-u}{\sqrt{R\theta}})\frac{G_{t}}{M} \,d\xi\Big\}\,dx
\nonumber\\
&\leq C\|[\widetilde{v}_{x},\widetilde{u}_{x},\widetilde{\theta}_{x}]\|^{2}
+C\|[\widetilde{v}_{t},\widetilde{u}_{t},\widetilde{\theta}_{t}]\|^{2}+C\|\mathbf{g}_{t}\|^{2}_{\sigma}+C(1+t)^{-\frac{4}{3}}.
\end{align}
Using the similar calculations as \eqref{5.21}, \eqref{5.22} and \eqref{5.23}, we can obtain
\begin{align*}
&-\frac{2}{3}\int_{\mathbb{R}}\Big\{(\widetilde{u}_{1}v \mathfrak{{f}})_{x}
(R\theta)^{\frac{3}{2}}\int_{\mathbb{R}^{3}}A_{1}(\frac{\xi-u}{\sqrt{R\theta}})
\{-\frac{u_{1}}{v}G_{x}+\frac{1}{v}P_{1}(\xi_{1}G_{x})\}\frac{1}{M}\,d\xi\Big\}\,dx
\nonumber\\
&\leq C\|[\widetilde{v}_{x},\widetilde{u}_{x},\widetilde{\theta}_{x}]\|^{2}+C\|\mathbf{g}_{x}\|^{2}_{\sigma}+C(1+t)^{-\frac{4}{3}}.
\end{align*}
By \eqref{5.7}, \eqref{5.18} and the similar calculations as \eqref{2.22}, we have
\begin{align*}
&-\frac{2}{3}\int_{\mathbb{R}}\Big\{(\widetilde{u}_{1}v \mathfrak{{f}})_{x}
(R\theta)^{\frac{3}{2}}\int_{\mathbb{R}^{3}}A_{1}(\frac{\xi-u}{\sqrt{R\theta}})\frac{Q(G,G)}{M}\, d\xi\Big\}\,dx
\nonumber\\
&=-\frac{2}{3}\int_{\mathbb{R}}\Big\{(\widetilde{u}_{1}v \mathfrak{{f}})_{x}
(R\theta)^{\frac{3}{2}}\int_{\mathbb{R}^{3}}\frac{\sqrt{\mu}A_{1}(\frac{\xi-u}{\sqrt{R\theta}})}{M}\Gamma(\frac{G}{\sqrt{\mu}},\frac{G}{\sqrt{\mu}}) \,d\xi\Big\}\,dx
\nonumber\\
&\leq C\|[\widetilde{v}_{x},\widetilde{u}_{x},\widetilde{\theta}_{x}]\|^{2}+C\|\mathbf{g}_{x}\|^{2}_{\sigma}
+C\varepsilon_{0}\|\mathbf{g}\|^{2}_{\sigma}+C(1+t)^{-\frac{4}{3}}.
\end{align*}
By using \eqref{5.17} and the above estimates, we arrive at 
\begin{align}
\label{5.25}
&-\frac{2}{3}\big((\widetilde{u}_{1}v \mathfrak{{f}})_{x},\int_{\mathbb{R}^{3}}(\frac{1}{2}\xi_{1}|\xi|^{2}-u\cdot \xi\xi_{1})L^{-1}_{M}\Theta_{1}\, d\xi\big)
\nonumber\\
&\leq C\sum_{|\alpha|=1}(\|\partial^{\alpha}[\widetilde{v},\widetilde{u},\widetilde{\theta}]\|^{2}
+\|\partial^{\alpha}\mathbf{g}\|^{2}_{\sigma})+C\varepsilon_{0}\|\mathbf{g}\|^{2}_{\sigma}+C(1+t)^{-\frac{4}{3}}.
\end{align}
Similar arguments as \eqref{5.25} imply
\begin{align*}
&\frac{2}{3}\big(\widetilde{u}_{1}v\mathfrak{{f}},u_{x}\cdot\int_{\mathbb{R}^{3}} \xi\xi_{1}L^{-1}_{M}\Theta_{1} \,d\xi\big)
=\sum^{3}_{i=1}\frac{2}{3}\big(\widetilde{u}_{1}v\mathfrak{{f}},u_{ix} R\theta\int_{\mathbb{R}^{3}}B_{1i}(\frac{\xi-u}{\sqrt{R\theta}})\frac{\Theta_{1}}{M} \,d\xi\big)
\nonumber\\
&\leq C\sum_{|\alpha|=1}(\|\partial^{\alpha}[\widetilde{v},\widetilde{u},\widetilde{\theta}]\|^{2}
+\|\partial^{\alpha}\mathbf{g}\|^{2}_{\sigma})+C\varepsilon_{0}\|\mathbf{g}\|^{2}_{\sigma}+C(1+t)^{-\frac{4}{3}}.
\end{align*}
For the second term on the right hand side of \eqref{5.13a},  by choosing a small $\delta>0$,  we deduce from \eqref{5.32A} and the above estimates that
\begin{align}
\label{5.40}
-\big(\widetilde{u}_{1},(\frac{2}{3}\widetilde{\theta} -p_{+}\widetilde{v})_{t}v\mathfrak{f}\big)
\leq& -\frac{5}{12}p_{+}\int_{\mathbb{R}}v\widetilde{u}_{1}^{2}\omega^{2}\,dx+C\varepsilon_{0}\|\mathbf{g}\|^{2}_{\sigma}+C(1+t)^{-\frac{4}{3}}
\nonumber\\
&\hspace{2cm}+ C\sum_{|\alpha|=1}(\|\partial^{\alpha}[\widetilde{v},\widetilde{u},\widetilde{\theta}]\|^{2}
+\|\partial^{\alpha}\mathbf{g}\|^{2}_{\sigma}).
\end{align}
For the last two terms on the right hand side of \eqref{5.13a}, by using \eqref{5.12a}, \eqref{1.19}$_{1}$, \eqref{1.29},
\eqref{2.9} and the  imbedding inequality, one has
\begin{align}
\label{5.41}
&-\big(\widetilde{u}_{1},(\frac{2}{3}\widetilde{\theta}-p_{+}\widetilde{v})v_{t}\mathfrak{f}\big)
-\big(\widetilde{u}_{1},(\frac{2}{3}\widetilde{\theta}-p_{+}\widetilde{v})v\mathfrak{f}_{t}\big)
\nonumber\\
&\leq C\|\mathfrak{f}\|_{L_{x}^{\infty}}\|\widetilde{u}_{1}\|_{L_{x}^{\infty}}(\|\widetilde{v}\|+\|\widetilde{\theta}\|)\|v_{t}\|
+C\|\mathfrak{f}_{t}\|_{L_{x}^{\infty}}
\|\widetilde{u}_{1}\|(\|\widetilde{v}\|+\|\widetilde{\theta}\|)
\nonumber\\
&\leq C(1+t)^{-\frac{1}{2}}\|\widetilde{u}_{1}\|^{\frac{1}{2}}\|\widetilde{u}_{1x}\|^{\frac{1}{2}}(\|\widetilde{v}\|
+\|\widetilde{\theta}\|)\|u_{1x}\|+C(1+t)^{-\frac{3}{2}}
\nonumber\\
&\leq C\|\widetilde{u}_{1x}\|^{2}+C(1+t)^{-\frac{3}{2}}.
\end{align}
It follows from \eqref{5.13a}, \eqref{5.40}  and \eqref{5.41} that
\begin{align}\label{5.43}
I_{4}&\leq \big(\widetilde{u}_{1},(\frac{2}{3}\widetilde{\theta}-p_{+}\widetilde{v})v\mathfrak{f}\big)_{t}
-\frac{5}{12}p_{+}\int_{\mathbb{R}}v\widetilde{u}_{1}^{2}\omega^{2}\,dx+C\varepsilon_{0}\|\mathbf{g}\|^{2}_{\sigma}
\nonumber\\
&\qquad+C\sum_{|\alpha|=1}(\|\partial^{\alpha}[\widetilde{v},\widetilde{u},\widetilde{\theta}]\|^{2}
+\|\partial^{\alpha}\mathbf{g}\|^{2}_{\sigma})+C(1+t)^{-\frac{4}{3}}.
\end{align}
By \eqref{5.13}, \eqref{5.10}, \eqref{5.12a}, \eqref{1.27}, \eqref{2.9} and the  imbedding inequality, we get
\begin{align}\label{5.44}
|I_{5}|&\leq   |\big(\frac{1}{v}(\frac{2}{3}\widetilde{\theta}-p_{+}\widetilde{v})^{2},\bar{v}_{x}\mathfrak{f}\big)|
+|\big(\frac{1}{v}(\frac{2}{3}\widetilde{\theta}-p_{+}\widetilde{v})^{2},\widetilde{v}_{x}\mathfrak{f}\big)|
\nonumber\\
&\leq C\delta\int_{\mathbb{R}}(\widetilde{v}^{2}+\widetilde{\theta}^{2})\omega^{2}\,dx+C(\|\widetilde{v}_{x}\|^{2}
+\|\widetilde{\theta}_{x}\|^{2})+C(1+t)^{-\frac{4}{3}}.
\end{align}
By \eqref{5.13}, \eqref{5.12a} and \eqref{1.29}, one has
\begin{align}\label{5.45a}
|I_{6}|+|I_{7}|\leq C(\|\widetilde{v}_{x}\|^{2}+\|\widetilde{u}_{x}\|^{2}+\|\widetilde{\theta}_{x}\|^{2})+C(1+t)^{-\frac{4}{3}}.
\end{align}
By using \eqref{5.16}, \eqref{5.18} and the similar arguments as \eqref{5.25},  we can obtain
\begin{align}\label{5.45}
|I_{8}|&= |\int_{\mathbb{R}}\Big\{[(\frac{2}{3}\widetilde{\theta}-p_{+}\widetilde{v})v\mathfrak{f}]_{x}
R\theta\int_{\mathbb{R}^{3}}B_{11}(\frac{\xi-u}{\sqrt{R\theta}})\frac{\Theta_{1}}{M}\, d\xi\Big\}\,dx|
\nonumber\\
&\leq C\sum_{|\alpha|=1}(\|\partial^{\alpha}[\widetilde{v},\widetilde{u},\widetilde{\theta}]\|^{2}
+\|\partial^{\alpha}\mathbf{g}\|^{2}_{\sigma})+C\varepsilon_{0}\|\mathbf{g}\|^{2}_{\sigma}+C(1+t)^{-\frac{4}{3}}.
\end{align}
By using \eqref{5.13},   \eqref{5.43}, \eqref{5.44}, \eqref{5.45a} and \eqref{5.45}, we arrive at
\begin{align*}
\int_{\mathbb{R}}\big\{(\frac{2}{3}\widetilde{\theta}-p_{+}\widetilde{v})^{2}+\widetilde{u}_{1}^{2}\big\}\omega^{2}\,dx
&\leq C\big(\widetilde{u}_{1},(\frac{2}{3}\widetilde{\theta}-p_{+}\widetilde{v})v\mathfrak{f}\big)_{t}
+C\delta\int_{\mathbb{R}}(\widetilde{v}^{2}+\widetilde{\theta}^{2})\omega^{2}\,dx
+C\varepsilon_{0}\|\mathbf{g}\|^{2}_{\sigma}
\nonumber\\
&\qquad+C\sum_{|\alpha|=1}\big\{\|\partial^{\alpha}[\widetilde{v},\widetilde{u},\widetilde{\theta}]\|^{2}
+\|\partial^{\alpha}\mathbf{g}\|^{2}_{\sigma}\big\}+C(1+t)^{-\frac{4}{3}}.
\end{align*}
Integrating it over $(0,t)$,  we have from this and \eqref{2.9} that
\begin{align}
\label{5.26}
\int^{t}_{0}\int_{\mathbb{R}}\big\{(\frac{2}{3}\widetilde{\theta}-p_{+}\widetilde{v})^{2}+\widetilde{u}_{1}^{2}\big\}\omega^{2}\,dxds
&\leq C+C\delta\int^{t}_{0}\int_{\mathbb{R}}(\widetilde{v}^{2}+\widetilde{\theta}^{2})\omega^{2}\,dxds+C\varepsilon_{0}\int^{t}_{0}\|\mathbf{g}\|^{2}_{\sigma}\,ds
\nonumber\\
&\quad+C\sum_{|\alpha|=1}\int^{t}_{0}\big\{\|\partial^{\alpha}[\widetilde{v},\widetilde{u},\widetilde{\theta}]\|^{2}
+\|\partial^{\alpha}\mathbf{g}\|^{2}_{\sigma}\big\}\,ds.
\end{align}

On the other hand, we choose $h=\frac{2}{3}\widetilde{\theta}+\frac{2}{3}p_{+}\widetilde{v}$ in Lemma \ref{lem5.5} and use \eqref{5.14} to deduce
\begin{align}
\label{5.41a}
( h_{t},h \mathfrak{g}^{2})
&=-\frac{2}{3}\big(\frac{\frac{2}{3}\widetilde{\theta}-p_{+}\widetilde{v}}{v}(\widetilde{u}_{1x}+\bar{u}_{1x}),h\mathfrak{g}^{2}\big)
+\frac{2}{3}\big([\frac{\kappa(\theta)}{v}\theta_{x}-\frac{\kappa(\bar{\theta})}{\bar{v}}\bar{\theta}_{x}]_{x},h\mathfrak{g}^{2}\big)
+\frac{2}{3}(Q_{1},h\mathfrak{g}^{2})
\nonumber\\
&+\frac{2}{3}(\int_{\mathbb{R}^{3}}(\frac{1}{2}\xi_{1}|\xi|^{2}-u\cdot\xi\xi_{1})L^{-1}_{M}\Theta_{1} d\xi,(h\mathfrak{g}^{2})_{x})
-\frac{2}{3}(u_{x}\int_{\mathbb{R}^{3}} \xi\xi_{1}L^{-1}_{M}\Theta_{1} d\xi,h\mathfrak{g}^{2}).
\end{align}
By using the facts that $\frac{2}{3}\widetilde{\theta}-p_{+}\widetilde{v}=h-\frac{5}{3}p_{+}\widetilde{v}$ and $\widetilde{u}_{1x}=\widetilde{v}_{t}$, we have
\begin{align}
\label{5.42aa}
-&\frac{2}{3}\big(\frac{\frac{2}{3}\widetilde{\theta}-p_{+}\widetilde{v}}{v}\widetilde{u}_{1x},h\mathfrak{g}^{2}\big)
\nonumber\\
=&-\frac{2}{3}\int_{\mathbb{R}}v^{-1}(h^{2}-\frac{5}{3}p_{+}\widetilde{v}h)\widetilde{v}_{t}\mathfrak{g}^{2}\,dx
=-\frac{1}{3}\int_{\mathbb{R}}\Big(2v^{-1}h^{2}\mathfrak{g}^{2}\widetilde{v}_{t}
-\frac{5}{3}p_{+}v^{-1}h\mathfrak{g}^{2}(\widetilde{v}^{2})_{t}\Big)\,dx
\nonumber\\
=&-\frac{1}{3}\big(\int_{\mathbb{R}}v^{-1}h\mathfrak{g}^{2}\widetilde{v}(2h-\frac{5}{3}p_{+}\widetilde{v})dx\big)_{t}
+\frac{2}{3}\int_{\mathbb{R}}v^{-1}h\mathfrak{g}\widetilde{v}(2h-\frac{5}{3}p_{+}\widetilde{v})\mathfrak{g}_{t}\,dx
\nonumber\\
&-\frac{1}{3}\int_{\mathbb{R}}v^{-2}v_{t}h\mathfrak{g}^{2}\widetilde{v}(2h-\frac{5}{3}p_{+}\widetilde{v})\,dx
+\frac{1}{3}\int_{\mathbb{R}}v^{-1}\mathfrak{g}^{2}\widetilde{v}(4h-\frac{5}{3}p_{+}\widetilde{v})h_{t}\,dx
:=\sum^{12}_{i=9}I_{i}.
\end{align}
We only estimate the last three terms in \eqref{5.42aa}. In view of \eqref{5.10}, it is easy to check that
\begin{align}
\label{5.43aa}
4\lambda\mathfrak{g}_{t}=\omega_{x}, \quad \|\mathfrak{g}(t,x)\|_{L_{x}^{\infty}}=\sqrt{\pi}\lambda^{-\frac{1}{2}}.
\end{align}
By using this and the facts that $h=\frac{2}{3}(\widetilde{\theta}+p_{+}\widetilde{v})$, $|\omega_{x}|\leq C(1+t)^{-1}$ and $v_{t}=u_{1x}$, one has
\begin{align*}
|I_{10}|+|I_{11}|&\leq C(1+t)^{-1}\int_{\mathbb{R}}(|\widetilde{v}|^{3}+|\widetilde{\theta}|^{3})\,dx
+C\int_{\mathbb{R}}|v_{t}|(|\widetilde{v}|^{3}+|\widetilde{\theta}|^{3})\,dx
\\
&\leq C\|[\widetilde{v}_{x},\widetilde{u}_{x},\widetilde{\theta}_{x}]\|^{2}+C(1+t)^{-\frac{4}{3}}.
\end{align*}
By \eqref{5.14} and  $h=\frac{2}{3}(\widetilde{\theta}+p_{+}\widetilde{v})$, we have
\begin{align*}
I_{12}&=\frac{2}{9}\Big\{\int_{\mathbb{R}}v^{-1}\mathfrak{g}^{2}\widetilde{v}(4h-\frac{5}{3}p_{+}\widetilde{v})
\Big(-(\frac{\frac{2}{3}\widetilde{\theta}-p_{+}\widetilde{v}}{v})u_{1x}
+\big(\frac{\kappa(\theta)}{v}\theta_{x}-\frac{\kappa(\bar{\theta})}{\bar{v}}\bar{\theta}_{x}\big)_{x}
+Q_{1} \Big)\,dx\Big\}
\nonumber\\
&\quad+\frac{2}{9}\Big\{\int_{\mathbb{R}}v^{-1}\mathfrak{g}^{2}\widetilde{v}(4h-\frac{5}{3}p_{+}\widetilde{v})
\Big(u\cdot\int_{\mathbb{R}^{3}} \xi\xi_{1}(L^{-1}_{M}\Theta_{1})_{x}\,d\xi
-\frac{1}{2}\int_{\mathbb{R}^{3}}\xi_{1}|\xi|^{2}(L^{-1}_{M}\Theta_{1})_{x} d\xi \Big)dx\Big\}
\nonumber\\
&:=I^{1}_{12}+I^{2}_{12}.
\end{align*}
By using \eqref{5.43aa}, \eqref{1.29}, \eqref{2.9}  and the expression of $Q_{1}$ in \eqref{2.8}, one has
$$
|I^{1}_{12}|\leq C\|[\widetilde{v}_{x},\widetilde{u}_{x},\widetilde{\theta}_{x}]\|^{2}
+C(1+t)^{-\frac{4}{3}}.
$$
Similar arguments as \eqref{5.25} imply
\begin{align*}
|I^{2}_{12}|\leq C\sum_{|\alpha|=1}(\|\partial^{\alpha}[\widetilde{v},\widetilde{u},\widetilde{\theta}]\|^{2}
+\|\partial^{\alpha}\mathbf{g}\|^{2}_{\sigma})+C\varepsilon_{0}\|\mathbf{g}\|^{2}_{\sigma}
+C(1+t)^{-\frac{4}{3}}.
\end{align*}
It follows from the above two estimates that
\begin{align*}
|I_{12}|\leq C\sum_{|\alpha|=1}(\|\partial^{\alpha}[\widetilde{v},\widetilde{u},\widetilde{\theta}]\|^{2}
+\|\partial^{\alpha}\mathbf{g}\|^{2}_{\sigma})+C\varepsilon_{0}\|\mathbf{g}\|^{2}_{\sigma}
+C(1+t)^{-\frac{4}{3}}.
\end{align*}
By using \eqref{5.42aa} and the above estimates, we can obtain
\begin{align}\label{5.43b}
&-\frac{2}{3}\big(\frac{\frac{2}{3}\widetilde{\theta}-p_{+}\widetilde{v}}{v}\widetilde{u}_{1x},h\mathfrak{g}^{2}\big)
+\frac{1}{3}\big(\int_{\mathbb{R}}v^{-1}h \mathfrak{g}^{2}\widetilde{v}(2h-\frac{5}{3}p_{+}\widetilde{v})dx\big)_{t}
\nonumber\\
&\leq C\sum_{|\alpha|=1}(\|\partial^{\alpha}[\widetilde{v},\widetilde{u},\widetilde{\theta}]\|^{2}
+\|\partial^{\alpha}\mathbf{g}\|^{2}_{\sigma})+C\varepsilon_{0}\|\mathbf{g}\|^{2}_{\sigma}
+C(1+t)^{-\frac{4}{3}}.
\end{align}
By \eqref{5.41a}, \eqref{1.27}, \eqref{5.43aa} and \eqref{2.17}, we have
\begin{align}\label{5.43c}
&-\frac{2}{3}\big(\frac{\frac{2}{3}\widetilde{\theta}-p_{+}\widetilde{v}}{v}\bar{u}_{1x},h\mathfrak{g}^{2}\big)
+\frac{2}{3}\big((\frac{\kappa(\theta)}{v}\theta_{x}-\frac{\kappa(\bar{\theta})}{\bar{v}}\bar{\theta}_{x})_{x},h\mathfrak{g}^{2}\big)
+\frac{2}{3}(Q_{1},h\mathfrak{g}^{2})
\nonumber\\
&\leq C(\epsilon+\delta)\int_{\mathbb{R}}(\widetilde{v}^{2}+\widetilde{\theta}^{2})\omega^{2}\,dx
+C_{\epsilon}\|[\widetilde{v}_{x},\widetilde{u}_{x},\widetilde{\theta}_{x}]\|^{2}
+C_{\epsilon}(1+t)^{-\frac{4}{3}}.
\end{align}
For the last two terms of \eqref{5.41a}, by using \eqref{5.15}, \eqref{5.16} and  the similar arguments as \eqref{5.25}, one has
\begin{align}\label{5.43d}
&\big(\int_{\mathbb{R}^{3}}(\frac{1}{2}\xi_{1}|\xi|^{2}-u\cdot\xi\xi_{1})L^{-1}_{M}\Theta_{1} d\xi,(h\mathfrak{g}^{2})_{x}\big)
-\big(u_{x}\cdot\int_{\mathbb{R}^{3}} \xi\xi_{1}L^{-1}_{M}\Theta_{1} d\xi,h\mathfrak{g}^{2}\big)
\nonumber\\
&=\int_{\mathbb{R}}(R\theta)^{\frac{3}{2}}\int_{\mathbb{R}^{3}}A_{1}(\frac{\xi-u}{\sqrt{R\theta}})\frac{\Theta_{1}}{M} d\xi(h\mathfrak{g}^{2})_{x}\,dx
-\sum^{3}_{i=1}\int_{\mathbb{R}}R\theta\int_{\mathbb{R}^{3}}B_{1i}(\frac{\xi-u}{\sqrt{R\theta}})\frac{\Theta_{1}}{M} \,d\xi
u_{ix}h\mathfrak{g}^{2}\,dx
\nonumber\\
&\leq C\epsilon\int_{\mathbb{R}}(\widetilde{v}^{2}+\widetilde{\theta}^{2})\omega^{2}\,dx+C_{\epsilon}\varepsilon_{0}\|\mathbf{g}\|^{2}_{\sigma}
+C_{\epsilon}\sum_{|\alpha|=1}(\|\partial^{\alpha}[\widetilde{v},\widetilde{u},\widetilde{\theta}]\|^{2}
+\|\partial^{\alpha}\mathbf{g}\|^{2}_{\sigma})+C_{\epsilon}(1+t)^{-\frac{4}{3}}.
\end{align}
 By using \eqref{5.41a}, \eqref{5.43b}, \eqref{5.43c} and \eqref{5.43d}, we arrive at
\begin{align*}
( h_{t},h \mathfrak{g}^{2})&\leq -\frac{1}{3}\big(\int_{\mathbb{R}}v^{-1}h\mathfrak{g}^{2}\widetilde{v}(2h-\frac{5}{3}p_{+}\widetilde{v})\,dx\big)_{t}
+C(\epsilon+\delta)\int_{\mathbb{R}}(\widetilde{v}^{2}+\widetilde{\theta}^{2})\omega^{2}\,dx
\nonumber\\
&\hspace{2cm}+C_{\epsilon}\varepsilon_{0}\|\mathbf{g}\|^{2}_{\sigma}
+C_{\epsilon}\sum_{|\alpha|=1}(\|\partial^{\alpha}[\widetilde{v},\widetilde{u},\widetilde{\theta}]\|^{2}
+\|\partial^{\alpha}\mathbf{g}\|^{2}_{\sigma})+C_{\epsilon}(1+t)^{-\frac{4}{3}}.
\end{align*}
Recalling that $h=\frac{2}{3}\widetilde{\theta}+\frac{2}{3}p_{+}\widetilde{v}$, by using this, \eqref{2.9} and Lemma \ref{lem5.5}, we have
\begin{align}
\label{5.27}
\int^{t}_{0}\int_{\mathbb{R}} (\frac{2}{3}\widetilde{\theta}+\frac{2}{3}p_{+}\widetilde{v})^{2}\omega^{2}\,dxds&\leq C_\epsilon
+C(\epsilon+\delta)\int^{t}_{0}\int_{\mathbb{R}}(\widetilde{v}^{2}+\widetilde{\theta}^{2})\omega^{2}\,dxds
+C_{\epsilon}\varepsilon_{0}\int^{t}_{0}\|\mathbf{g}\|^{2}_{\sigma}\,ds
\nonumber\\
&\quad+C_{\epsilon}\sum_{|\alpha|=1}\int^{t}_{0}(\|\partial^{\alpha}[\widetilde{v},\widetilde{u},\widetilde{\theta}]\|^{2}
+\|\partial^{\alpha}\mathbf{g}\|^{2}_{\sigma})\,ds.
\end{align}
Similarly, we take $h=\widetilde{u}_{i}$ $(i=2,3)$ in Lemma \ref{lem5.5} and use \eqref{2.7}$_{3}$ to deduce
\begin{multline*}
( h_{t},h\mathfrak{g}^{2})
=-\big(\frac{\mu(\theta)}{v}\widetilde{u}_{ix},(h\mathfrak{g}^{2})_{x}\big)
+\big(\int_{\mathbb{R}^{3}}\xi_{i}\xi_{1}L^{-1}_{M}\Theta_{1} d\xi,(h\mathfrak{g}^{2})_{x}\big)
\\
\leq C\epsilon\int_{\mathbb{R}}(\widetilde{v}^{2}+\widetilde{\theta}^{2})\omega^{2}\,dx
+C_{\epsilon}\varepsilon_{0}\|\mathbf{g}\|^{2}_{\sigma}
+C_{\epsilon}\sum_{|\alpha|=1}(\|\partial^{\alpha}[\widetilde{v},\widetilde{u},\widetilde{\theta}]\|^{2}
+\|\partial^{\alpha}\mathbf{g}\|^{2}_{\sigma})+C_{\epsilon}(1+t)^{-\frac{4}{3}}.
\end{multline*}
It follows from this, \eqref{2.9} and Lemma \ref{lem5.5} that
\begin{align}
\label{5.28}
\sum^{3}_{i=2}\int^{t}_{0}\int_{\mathbb{R}}\widetilde{u}_{i}^{2}\omega^{2}\,dxds&\leq C_\epsilon
+C\epsilon\int^{t}_{0}\int_{\mathbb{R}}(\widetilde{v}^{2}+\widetilde{\theta}^{2})\omega^{2}\,dxds
+C_{\epsilon}\varepsilon_{0}\int^{t}_{0}\|\mathbf{g}\|^{2}_{\sigma}\,ds
\nonumber\\
&\quad+C_{\epsilon}\sum_{|\alpha|=1}\int^{t}_{0}(\|\partial^{\alpha}[\widetilde{v},\widetilde{u},\widetilde{\theta}]\|^{2}
+\|\partial^{\alpha}\mathbf{g}\|^{2}_{\sigma})\,ds.
\end{align}
Therefore, the estimate \eqref{5.11} follows from \eqref{5.26}, \eqref{5.27} and \eqref{5.28}
by   choosing both $\epsilon$ and   $\delta$ small enough.
This completes the proof of Lemma \ref{lem5.6}.
\end{proof}

\noindent {\bf Acknowledgment:}\,
The research of Renjun Duan was partially supported by the General Research Fund (Project No.~14301719) from RGC of Hong Kong and a Direct Grant from CUHK. The research of Hongjun Yu was supported by the GDUPS 2017 and the NNSFC Grant 11371151. Dongcheng Yang would like to thank Department of Mathematics, CUHK  for hosting his visit in the year 2020-2021.

\medskip

\noindent{\bf Conflict of Interest:} The authors declare that they have no conflict of interest.


\normalsize
\end{document}